\DeclareMathOperator{\Span}{Span}
    \pgfmathsetlength{\pgf@xb}{\pgfkeysvalueof{/pgf/outer xsep}}%  
    \pgfmathsetlength{\pgf@yb}{\pgfkeysvalueof{/pgf/outer ysep}}%  
\definecolor{connection}{rgb}{0.7,0.1,0.1}
\tikzset{
root/.style={circle,fill=black!50,inner sep=0pt, minimum size=3mm},
        dot/.style={circle,fill=black,inner sep=0pt, minimum size=1.2mm},
        sdot/.style={circle,fill=black,inner sep=0pt,minimum size=.5mm},
        dotred/.style={circle,fill=black!50,inner sep=0pt, minimum size=2mm},
        var/.style={circle,fill=black!10,draw=black,inner sep=0pt, minimum size=3mm},
        kernel/.style={semithick,shorten >=2pt,shorten <=2pt},
        kernel1/.style={draw=black,thick},
        kernels/.style={snake=zigzag,shorten >=2pt,shorten <=2pt,segment amplitude=1pt,segment length=4pt,line before snake=2pt,line after snake=5pt,},
        rho/.style={densely dashed,semithick,shorten >=2pt,shorten <=2pt},
           testfcn/.style={dotted,semithick,shorten >=2pt,shorten <=2pt},
           tau/.style={circle,inner sep=1pt,draw=black,fill=white,text=black,thin},
        renorm/.style={shape=circle,fill=white,inner sep=1pt},
        labl/.style={shape=rectangle,fill=white,inner sep=1pt},
        xic/.style={very thin,circle,fill=symbols,draw=black,inner sep=0pt,minimum size=1.2mm},
        xi/.style={very thin,circle,fill=blue!10,draw=black,inner sep=0pt,minimum size=1.2mm},
        xix/.style={crosscircle,fill=blue!10,draw=black,inner sep=0pt,minimum size=1.2mm},
	xib/.style={very thin,circle,fill=blue!10,draw=black,inner sep=0pt,minimum size=1.6mm},
	xie/.style={very thin,circle,fill=green!50!black,draw=black,inner sep=0pt,minimum size=1.6mm},
	xid/.style={very thin,circle,fill=symbols,draw=black,inner sep=0pt,minimum size=1.6mm},
	xibx/.style={crosscircle,fill=blue!10,draw=black,inner sep=0pt,minimum size=1.6mm},
	kernels2/.style={very thick,draw=connection,segment length=12pt},
	not/.style={thin,circle,fill=symbols,draw=connection,fill=connection,inner sep=0pt,minimum size=0.5mm},
	>=stealth,
  }
\numberwithin{equation}{section}
\newtheorem{theorem}{Theorem}[section]
\newtheorem{corrolaire}[theorem]{Corollary}
\newtheorem{proposition}[theorem]{Proposition}
\newtheorem{definition}[theorem]{Definition}
\newtheorem{remark}[theorem]{Remark}
\newtheorem{lemme}[theorem]{Lemma}
\newtheorem{assumption}[theorem]{Assumption}
\newcommand{\scal}[2]{\langle #1 , #2 \rangle}
\newcommand{\bscal}[2]{\bm{\langle \langle }  #1, #2 \bm{\rangle \rangle}}
\newcommand{\Bscal}[2]{\bm{ \Big\langle \Big\langle }  #1, #2 \bm{\Big\rangle \Big \rangle}}
\newcommand{\E}{\mathbb{E}}
\newcommand{\ind}{\mathds{1}}
\renewcommand*{\epsilon}{\varepsilon}
\newcommand{\TT}{\TT}
\newcommand{\CF}{\mathcal{F}}
\newcommand{\CT}{\mathcal{T}}
\newcommand{\rCT}{\mathring{\mathcal{T}}}
\newcommand{\dimY}{k}
\newcommand{\dimX}{d}
\def\bUpsilon{\boldsymbol{\Upsilon}}
\newcommand{\forest}[1]{\mathcal{F}_{\le #1}}
\newcommand{\rtree}[1]{\mathring{\mathcal{T}}_{\le #1}}
\newcommand{\tree}[1]{\mathcal{T}_{\le #1}}
\newcommand{\jbrac}[1]{ \langle  #1  \rangle}
\DeclareMathOperator*{\uint}{\scalerel*{\rotatebox{8}{$\!\scriptstyle\int\!$}}{\int}}
\def\all{\mathrm{\tiny all}}
\def\num{\#}
\newcommand{\graft}{\curvearrowright}
\definecolor{darkred}{rgb}{0.9,0.1,0.1}
\definecolor{darkergreen}{rgb}{0.0, 0.5, 0.0}
\definecolor{darkblue}{rgb}{0.0, 0.2, 0.6}
\definecolor{orange}{rgb}{1, 0.5, 0.1}
\newcommand{\id}{\mathrm{id}}
\newcommand{\Char}{\mathrm{Char}}
\newcommand{\X}{\mathbf{X}}
\newcommand{\Y}{\mathbf{Y}}
\newcommand{\Z}{\mathbf{Z}}
\newcommand{\U}{\mathbf{U}}
\newcommand{\R}{\mathbb{R}}
\newcommand{\eqdef}{:=}
\def\TT{\widehat{\CT}}
\newcommand{\one}{\mathbf{1}}
\title[Bounds for RDEs with non-linear damping]{A priori bounds for rough differential equations with a non-linear damping term}
\author{Timothee Bonnefoi}
\address{Timothee Bonnefoi, \'Ecole Normale Sup\'erieure de Lyon
}
\email{timothee.bonnefoi@ens-lyon.fr}
\author{Ajay Chandra}
\address{Ajay Chandra, Imperial College London
}
\email{a.chandra@imperial.ac.uk}
\author{Augustin Moinat}
\address{Augustin Moinat, RADRiGS Slacklines
}
\email{augustin.moinat@gmail.com}
\author{Hendrik Weber}
\address{Hendrik Weber, University of Bath
}
\email{h.weber@bath.ac.uk}
\begin{document}
\begin{abstract}
We consider a rough differential equation with a non-linear damping drift term:
\begin{align*}
dY(t) = - |Y|^{m-1} Y(t) dt + \sigma(Y(t)) dX(t),
\end{align*}
where $m>1$,  $X$ is a (branched) rough path of arbitrary regularity $\alpha >0$,  and where $\sigma$ is smooth 
and satisfies an $m$ and $\alpha$-dependent growth property.
We show a strong a priori bound for $Y$, which includes the "coming down from infinity" property, i.e. the bound on 
$Y(t)$ for a fixed $t>0$ holds uniformly over all choices of initial datum $Y(0)$.

The method of proof builds on recent work on a priori bounds for the $\phi^4$ SPDE in arbitrary subcritical 
dimension \cite{ch2019priori}. A key new ingredient is an extension of the algebraic framework which 
permits to derive an estimate on higher order conditions of a coherent controlled rough path in terms of the 
regularity condition at lowest level.
\end{abstract}

\maketitle

\section{Introduction}

Rough path theory  \cite{L94,L98,G03,G06} was developed from the late 90s as a solution theory for  differential equations 
of the form
\begin{align}\label{RDE}
dY(t) = \sigma(Y(t)) dX(t) 
\end{align}
for irregular drivers $X(t)$, typically of H\"older regularity $\alpha<\frac12$. 
The principal example to have in mind is the case of 
random drivers. In particular, when $X$ is a Brownian motion, rough path theory  provides an alternative approach 
to the classical theory of  It\^o  stochastic differential equations (SDEs).
Since its inception in the late 90s the theory has been highly successful; early applications included the treatment of SDEs for correlated signals, such as 
fractional Brownian motion,  streamlined proofs for several classical results including large deviations, support theorems and the construction of stochastic flows for SDEs, 
see e.g. the monographs \cite{MR2036784,FrizVictoirBook,friz2014course}.
 More recent results include   the proof that certain deterministic fast-slow dynamical 
systems are  well-approximated by systems of SDEs
  \cite{kelly2016smooth} 
  or the development of a Malliavin calculus for stochastic Navier Stokes equations with 
multiplicative noise
\cite{gerasimovivcs2019hormander}. 

In this article we are concerned with deriving a priori bounds and the related issue of non-explosion for \eqref{RDE}. 
Rough path theory primarily addresses the small scale roughness of  solutions and the associated problem to define and 
control the rough integrals $\int \sigma(Y(t)) dX(t) $. There are relatively few results on the control of the global behaviour of solutions: 
in monographs, global existence for \eqref{RDE} is typically shown under the  restrictive assumptions that  $\sigma$ 
and all of its derivatives up to order $N $ are bounded  
 (see e.g.  \cite[Theorem 10.14]{FrizVictoirBook}, \cite[Theorem 8.4]{friz2014course}). Here and throughout the paper we denote by 
 $N$ the unique natural number satisfying
 \begin{equation*}
N\alpha \leq 1 < (N+1) \alpha.
\end{equation*}

Relaxing these growth conditions turns out to be intricate, and in fact  
global existence may fail, e.g. for bounded $\sigma$ with unbounded derivative 
\cite{Li20111407}. There are however some results which relax the boundedness assumptions on the full tensors $D^{\beta} \sigma$, $|\beta| \leq N$,
by showing that a control on  certain combinations of these derivatives is sufficient \cite{davie2008differential,Lejay2012,weidner2018geometric}. 

We are interested in rough differential equations with drift
\begin{equation*}
dY(t) = b(Y(t) )dt + \sigma(Y(t)) dX(t) \;.
\end{equation*}
The striking counter-example \cite[Section 3.3]{cox2013local} suggests that even in the case $\sigma = 1$ (additive noise) a
standard Lyapunov condition $b(y) \cdot y \leq C(|y|^2 +1) $ alone may not suffice to exclude finite-time explosion.
This issue was addressed in \cite{RS}, where it was shown that 
if $\sigma$ and its derivatives up to order $N$ are bounded, 
then the more restrictive Lyapunov condition
\begin{equation}\label{e:gen-Lyapunov}
b(y) \cdot y \leq C(|y|^2 +1) \qquad \text{and} \qquad \Big| b(y) - \frac{ (b(y) \cdot y ) y  }{|y|^2} \Big| \leq C (1+|y|) \;,
\end{equation}
 suffices to rule out finite-time explosion.
In this article we treat the case of super-linear inward drift $b$. 
 For simplicity, we restrict to $b$ of polynomial form, i.e. we deal with  
\begin{align}\notag
dY(t) &= - |Y(t)|^{m-1} Y(t)dt + \sigma(Y(t)) dX(t) \qquad t \in (0,1]\\
\label{RDE3}
Y(0) & = y_0 \in \R^k \; ,
\end{align} 
where $m>1$, $X$ is an $\R^d$ valued (branched) rough path of regularity $\alpha \in (0,1)$ in the sense of \cite{G06,HK,MR4013830} 
and $\sigma$ is assumed to be a $C^N$  map $\R^k \to \R^{k \times d}$, where $\R^{k \times d}$ denotes the space of $k \times d $ matrices.
This drift does satisfy \eqref{e:gen-Lyapunov} and therefore the results of \cite{RS} imply non-explosion if $\sigma$ and its derivatives are bounded. 
In this article, we use the damping effect of the drift to get stronger estimates on solutions  and to relax the boundedness conditions on $\sigma$. 
More specifically, we treat coefficients $\sigma$ satisfying either one of the following two growth assumptions:
\begin{assumption}[Bounded coefficients]
\label{Ass-Sigma-Bounded}
There exists a constant $C_{\sigma} < \infty$ such that for each multi-index $\beta  = (\beta_1, \ldots, \beta_k) \in \mathbb{N}^{k}$ 
of length $|\beta| := \sum_{j=1}^k \beta_j  \leq N$  we have
\begin{equation*}
\sup_{x \in\R^d} | \partial^\beta  \sigma(x) | \leq C_{\sigma}.
\end{equation*}
\end{assumption} 

\begin{assumption}[Coefficients of polynomial growth]
\label{Ass-Sigma-Polynomial}
There exists a $\gamma >0$ and a constant $C_{\sigma} < \infty$ such that for each multi-index $\beta   \in \mathbb{N}^{k}$ of length $|\beta|   \leq N$ and for all $x \in \R^k$
we have
\begin{equation*}
 | \partial^\beta\sigma (x)  |   \leq C_{\sigma} \jbrac{x}^{\gamma - |\beta|}\;,
\end{equation*} 
where above and below, $\jbrac{x} = (1+|x|^{2})^{\frac{1}{2}}$. 
\end{assumption}

\begin{remark}
Assumption~\ref{Ass-Sigma-Polynomial} is satisfied, for example, in the scalar case $d=k = 1$ and for  
\begin{equation*} 
\sigma(x) = \jbrac{x}^{\gamma}\;. 
\end{equation*}
\end{remark}
Our first main result is the following estimate:
\begin{theorem}[Coming down from infinity] \label{thm:main}
Let $\X$ be an $\alpha$-rough path  for some $\alpha \in (0,1)$ and let 
 $Y$ solve \eqref{RDE3}. 
\begin{enumerate}
\item If $\sigma$ satisfies the growth Assumption~\ref{Ass-Sigma-Bounded} we have for $t \in (0,1]$  
\begin{align}\label{e:thm-bound}
|Y(t) | \leq C \max \Big\{   t^{-\frac{1}{m-1}}   \; , \;\max_{h \in \rtree{N}}[\X:h]^{\frac{1}{ m\alpha|h|}}  \Big\} \;. 
\end{align}
\item  If $\sigma$ satisfies the growth Assumption~\ref{Ass-Sigma-Polynomial} for
\begin{align}\label{gamma-condition-m}
1 \leq \gamma < (m-1) \alpha +1 \;,
\end{align}
then we   have for $t \in (0,1]$
\begin{align}\label{e:thm-bound-bis}
|Y(t) | \leq C \max \Big\{  t^{-\frac{1}{m-1}}  \;, \; \max_{h \in \rtree{N}} [\X:h]^{\frac{1}{ ((m-1) \alpha- \gamma+1)|h|}} \Big\} \;.
\end{align}
\end{enumerate}
In both statements, $C< \infty$ denotes a constant that depends on $d,k,m,\alpha$ and $C_{\sigma}$.
\end{theorem}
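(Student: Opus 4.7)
The plan is to argue by contradiction. Fix a large constant $C$ to be tuned and suppose that at some $t_{*}\in(0,1]$ we have $M\eqdef|Y(t_{*})|$ strictly greater than $C$ times the right-hand side of \eqref{e:thm-bound} (respectively \eqref{e:thm-bound-bis}). Since $M\gg t_{*}^{-1/(m-1)}$, the natural drift time scale $\tau\simeq M^{-(m-1)}$ is much smaller than $t_{*}$, so the interval $I=[t_{*}-\tau,t_{*}]$ lies inside $(0,1]$, and one may restrict by continuity to a maximal subinterval $I'\subseteq I$ on which $|Y|\geq M/2$. On $I'$ the super-linear damping $-|Y|^{m-1}Y$ has magnitude $\gtrsim M^{m}$ and points inward, forcing a displacement of $Y(t_{*})-Y(\inf I')$ of modulus $\gtrsim |I'|\,M^{m}$. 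The aim is to show that the rough integral $\int_{I'}\sigma(Y)\,dX$ cannot produce a comparable fluctuation, and thereby contradict $|Y(t_{*})|=M$.

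To bound this rough integral, view $\sigma(Y)$ as a coherent branched controlled rough path indexed by the trees of $\rtree{N}$. The Davie/sewing estimate gives a contribution of order $\sum_{h\in\rtree{N}}[\X:h]\,|I'|^{\alpha|h|}$ multiplied by the norms of the corresponding coefficients, plus a higher-order remainder. Under Assumption~\ref{Ass-Sigma-Bounded} the coefficient norms are bounded by $C_{\sigma}$; under Assumption~\ref{Ass-Sigma-Polynomial} the bound $|\partial^{\beta}\sigma(Y)|\leq C_{\sigma}\jbrac{Y}^{\gamma-|\beta|}$ combined with matching powers of $Y$ from its own Gubinelli derivatives yields a total weight controlled by a power of $M^{\gamma}$ per tree. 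The key new ingredient announced in the abstract enters precisely here: it controls the higher-order coherence conditions of $\sigma(Y)$ in terms only of the lowest-level Hölder seminorm of $Y$, which is itself obtained self-consistently from the equation. This is what allows the remainder to be closed uniformly in $M$.

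Balancing the drift displacement against each tree-indexed noise contribution, with $\tau\simeq M^{-(m-1)}$, yields for every $h\in\rtree{N}$ a lower bound of the form $[\X:h]\gtrsim M^{p(|h|)}$, where $p(|h|)=m\alpha|h|$ in case (1) and $p(|h|)=((m-1)\alpha-\gamma+1)|h|$ in case (2); the hypothesis \eqref{gamma-condition-m} is what guarantees that this latter exponent is positive, so that the bound inverts to a meaningful estimate. These are exactly the inequalities that the choice of $M$ was assumed to violate, producing the contradiction. The principal obstacle is the algebraic step sketched above: in the classical branched rough path formalism all higher-order Hölder conditions are part of the input data, whereas here the equation only provides the lowest-level condition and the remaining ones must be extracted from it. Developing the algebraic framework that makes this extraction systematic, in a form tight enough to close the above bootstrap, is the main technical contribution of the paper.
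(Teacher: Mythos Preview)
Your sketch has the right intuition---balance the damping against the rough integral on an interval whose length is a negative power of $M=|Y(t_{*})|$---and you correctly single out the reduction of higher-level remainders to the $\ind$-level one (Corollary~\ref{cor:one_controls_others}) as the technical crux. But the backward-in-time contradiction has a genuine gap.

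On $I'$ you only impose the \emph{lower} bound $|Y|\ge M/2$; you have no control on $\sup_{I'}|Y|$. This matters in two places. First, the sewing remainder you need to close feeds back through $[\Y:\ind]$, and by \eqref{eq:remainder_relation} one has $[\Y:\ind]_{I'}\le [\Z:\ind]_{I'}+|I'|^{1-N\alpha}\|Y\|_{I'}^{m}$; without an upper bound on $\|Y\|_{I'}$ this term cannot be absorbed, so the ``key new ingredient'' you invoke does not by itself close the loop. Second, in the case $\inf I'=t_{*}-\tau$ (i.e.\ $|Y|\ge M/2$ on all of $I$) there is simply no contradiction: nothing forbids $|Y(t_{*}-\tau)|\gg M$, with the damping bringing $Y$ down to level $M$ by time $t_{*}$.

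The paper avoids this by arguing \emph{forward}. One fixes $t^{\star}$ and works with the quantity $\|Y\|_{[t^{\star},1]}$, which is by definition an upper bound over the interval used in the interior regularity estimate (Lemma~\ref{interior_regularity} and Corollary~\ref{cor:interior_regularity}). Under the standing hypothesis \eqref{eq:simplifying_assump} (respectively \eqref{eq:simplifying_assump-Step7}), this estimate is fed into the ODE-type bound of Lemma~\ref{lemma:MW}---rather than a direct displacement argument---to obtain $|Y(t)|\le\tfrac12\|Y\|_{[t^{\star},1]}$ once $t-t^{\star}\gtrsim \|Y\|_{[t^{\star},1]}^{-(m-1)}$. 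An explicit iteration over times $t_{0}<t_{1}<\cdots$ with $t_{n+1}-t_{n}\sim\|Y\|_{[t_{n},1]}^{-(m-1)}$ then sums geometrically to give \eqref{e:thm-bound} and \eqref{e:thm-bound-bis}.

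Two smaller points. Your conclusion should be ``for some $h$'', not ``for every $h$'', since the rough integral is a sum over trees. And in case~(1) your own balancing with $\tau\simeq M^{-(m-1)}$ would yield $p(|h|)=1+(m-1)\alpha|h|$, not $m\alpha|h|$; the paper's exponent comes from the shorter scale $L=\|Y\|_{[t^{\star},1]}^{-m}$ (see \eqref{e:step4-condition2}), which is forced by the absorption condition \eqref{e:step2-condition} in combination with the term $L^{\alpha}\|Y\|^{m-1}[Y]_{\alpha}$ in Lemma~\ref{lemma:MW}---cf.\ Remark~\ref{exponents}.
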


The definition of a (branched) rough path is recalled in Definition~\ref{def-RP}, while the precise notion of solution to a rough differential equation is
 given in Definition~\ref{def:solution}. We mention right away that the solution $\Y$ of a rough differential equation as in Definition~\ref{def:solution} has to be viewed 
 as taking values in a larger space than $\R^k$ (a space spanned by decorated forests). Here $Y(t) \in \R^k$ is the "observable solution" which corresponds to the coefficient of 
 $\one$.  We write $|h|$ for the order of a tree, that is the number of its vertices, and  $\rtree{N}$ is the set of \emph{trees} of order $\leq N$, see Section~\ref{s:sets_of_trees}. 
The seminorm $[\X:h]$ 
controls the order bound for $\X$ at level $h$ - it can be thought of as a H\"older type bound on a (postulated) $|h|$-fold iterated integral of $X$ - see \eqref{order-norm}.
\begin{theorem}[Small times] \label{thm:main2}
Let $Y$ solve \eqref{RDE3}, for an $\alpha$-rough path $\X$. 
\begin{enumerate}
\item Let $\sigma$ satisfy the growth Assumption~\ref{Ass-Sigma-Bounded} and for $\epsilon_1 = \epsilon_1(m)$ and $\epsilon_2 = \epsilon_2 (\alpha,d,k, C_\sigma)$  small enough set 
\begin{equation}\label{e:def-T1T2-1}
T_1 = \epsilon_1 \frac{1}{\jbrac{y_0}^{m-1}}\;, \qquad \qquad   T_2 = \epsilon_2 \min   \bigg\{  \frac{1}{[\X \colon h  ]^{\frac{1}{|h| \alpha }}  }   \colon h \in  \rtree{N}   \bigg \}    \;.
\end{equation}
Then for $t \leq  \min \{ T_1,  T_2 \}$ we have
\begin{align}\label{e:small-time-bound1}
\jbrac{Y(t)} \leq 2  \jbrac{y_0}\;.
\end{align}
\item  Let $\sigma$ satisfy the growth Assumption~\ref{Ass-Sigma-Polynomial} for an exponent $\gamma$ satisfying \eqref{gamma-condition-m}, let $T_1$ be as in \eqref{e:def-T1T2-1} and  for 
$\epsilon_2 = \epsilon_2 (\alpha,d,k, C_\sigma)$  small enough set
\begin{equation}\label{e:def-T1T2-bis}
 T_2 = \epsilon_2  \Big( \frac{1}{\jbrac{y_0}}  \Big)^{\frac{\gamma-1}{\alpha}} \min   \bigg\{  \frac{1}{[\X \colon h  ]^{\frac{1}{|h| \alpha }}  }   \colon h \in  \rtree{N}   \bigg \}    \;.
\end{equation}
Then for $t \leq  \min \{ T_1,  T_2 \}$  we   have
\begin{align}\label{e:small-time-bound3}
\jbrac{Y(t) } \leq 2 \jbrac{y_0} \;.
\end{align}

\end{enumerate}
\end{theorem}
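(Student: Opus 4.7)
The plan is to run a straightforward continuity (bootstrap) argument based on the integral form of \eqref{RDE3}. Define the stopping time
\begin{equation*}
\tau = \inf\{ t > 0 : \jbrac{Y(t)} > 2 \jbrac{y_0} \} \wedge 1,
\end{equation*}
and aim to show $\tau \geq \min\{T_1, T_2\}$. Suppose for contradiction that $\tau < \min\{T_1, T_2\}$. On $[0,\tau]$ we have the uniform bound $\jbrac{Y(s)} \leq 2 \jbrac{y_0}$, and the observable component satisfies
\begin{equation*}
Y(t) - y_0 = -\int_0^t |Y(s)|^{m-1} Y(s)\, ds + \int_0^t \sigma(Y(s))\, d\X(s),
\end{equation*}
where the second integral is understood as a rough integral against the controlled rough path solution $\Y$ (the coefficients at higher trees being the usual iterated derivatives of $\sigma$ evaluated at $Y$). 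The plan is to bound the right-hand side by $\tfrac12 \jbrac{y_0}$, contradicting the definition of $\tau$ by continuity.

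For the drift term, a trivial pointwise bound gives
\begin{equation*}
\Bigl| \int_0^t |Y|^{m-1} Y\, ds \Bigr| \leq t \,(2 \jbrac{y_0})^m,
\end{equation*}
which is at most $c \jbrac{y_0}$ precisely when $t \leq \epsilon_1 / \jbrac{y_0}^{m-1} = T_1$ for $\epsilon_1(m)$ sufficiently small. For the rough integral, we expand as a sum over trees $h \in \rtree{N}$ of contributions of the form $\sigma_h(Y(s))$ tested against the corresponding level of $\X$ on $[0,t]$; the standard rough-integral estimate yields
\begin{equation*}
\Bigl| \int_0^t \sigma(Y)\, d\X \Bigr| \lesssim \sum_{h \in \rtree{N}} t^{|h| \alpha}\, [\X:h]\, \sup_{s \leq \tau} |\sigma_h(Y(s))|,
\end{equation*}
up to a remainder controlled by $t^{(N+1)\alpha}$ times the controlled rough path norm of $\Y$ (which we control a priori using the coherence on $[0,\tau]$ together with the bounds on $\sigma$ and the boundedness of $Y$). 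Under Assumption~\ref{Ass-Sigma-Bounded} each $\sigma_h$ is bounded by a constant depending on $C_\sigma$, and requiring each summand to be $\leq c \jbrac{y_0}$ for each $h$ gives exactly $t \leq \epsilon_2 / [\X:h]^{1/(|h|\alpha)}$, hence $t \leq T_2$ as in \eqref{e:def-T1T2-1}.

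For part~(2), the only change is that $\sigma_h$, being a polynomial in the derivatives $\partial^{\beta}\sigma$ of orders summing appropriately as $h$ is traversed, satisfies under Assumption~\ref{Ass-Sigma-Polynomial} a bound of the form
\begin{equation*}
|\sigma_h(y)| \leq C\, \jbrac{y}^{|h|(\gamma - 1) + 1},
\end{equation*}
since each vertex of $h$ contributes an extra factor $\jbrac{y}^{\gamma - 1}$ beyond the base factor $\jbrac{y}^{\gamma}$. Plugging in $\jbrac{Y(s)} \leq 2\jbrac{y_0}$ and asking the $h$-th summand to be $\leq c\jbrac{y_0}$ yields
\begin{equation*}
t^{|h|\alpha}\, [\X:h]\, \jbrac{y_0}^{|h|(\gamma-1)} \leq c,
\end{equation*}
which rearranges to the threshold in \eqref{e:def-T1T2-bis}, uniformly in $h$. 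The main technical obstacle is the rough-integral estimate above: one has to control the remainder term arising from truncation at level $N$, which requires propagating a uniform bound on the controlled rough path coefficients of $\Y$ throughout $[0,\tau]$; this is handled by the algebraic/analytic framework set up later in the paper (coherence of $\Y$ together with the a priori bound $\jbrac{Y(s)} \leq 2 \jbrac{y_0}$). Once both contributions are bounded by $\tfrac14 \jbrac{y_0}$, we obtain $\jbrac{Y(\tau)} \leq \tfrac32 \jbrac{y_0}$, contradicting $\jbrac{Y(\tau)} = 2\jbrac{y_0}$.
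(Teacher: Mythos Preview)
Your overall architecture matches the paper's: a continuity/bootstrap argument with a stopping time, a trivial bound on the drift contribution yielding the $T_1$ threshold, and a bound on the rough integral yielding the $T_2$ threshold. The paper does exactly this (with the cosmetic difference that it uses $3\jbrac{y_0}$ for the stopping level and then lands at $\tfrac{5}{2}\jbrac{y_0}$).

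The genuine gap is in your treatment of the rough integral. You write the germ expansion correctly and then say the remainder is ``controlled by $t^{(N+1)\alpha}$ times the controlled rough path norm of $\Y$ (which we control a priori using the coherence on $[0,\tau]$ together with the bounds on $\sigma$ and the boundedness of $Y$)''. This is not right: the Sewing Lemma remainder involves the seminorms $[\sigma_\mu(\Y):f]_I$, and by Corollary~\ref{cor:one_controls_others} these depend \emph{linearly} on $[\Y:\ind]_I$, which is a H\"older-type regularity quantity on the level-$\ind$ remainder, not something bounded a priori by coherence and a sup-norm bound on $Y$. Coherence only pins down the \emph{coefficients} $\scal{h}{\Y_s}$; it says nothing about the size of $R^{\Y,\ind}_{s,t}$. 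Closing this circularity is precisely the content of the interior regularity Lemma~\ref{interior_regularity}/Corollary~\ref{cor:interior_regularity}: one feeds $[\sigma_\mu(\Y):f]$ back into the sewing bound for $[\Z:\ind]$, uses $[\Y:\ind] \le [\Z:\ind] + L^{1-N\alpha}\|Y\|^m$, and \emph{absorbs} the resulting $[\Z:\ind]$ on the right into the left. This absorption is only possible under the smallness condition \eqref{e:step2-condition}, and in the bounded-coefficient case that condition is exactly what your $T_2$ enforces --- but your argument never makes this connection. In the polynomial case the same issue arises, with the added wrinkle that the $U_\mu$ factors in \eqref{e:step2-condition} themselves carry powers of $\jbrac{Y}$, which is why the extra factor $\jbrac{y_0}^{-(\gamma-1)/\alpha}$ appears in $T_2$.

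So: the outline is correct and coincides with the paper's, but you must invoke Corollary~\ref{cor:interior_regularity} (or rederive it) rather than assert an a priori bound on the controlled rough path norm; the latter is false as stated.
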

Theorems~\ref{thm:main} and~\ref{thm:main2} can be combined into a single bound.
\begin{corrolaire}\label{coro}
Let $Y$ solve \eqref{RDE3}, for an $\alpha$-rough path $\X$ for an $\alpha \in (0,1)$.
\begin{enumerate}
\item If $\sigma$ satisfies the growth Assumption~\ref{Ass-Sigma-Bounded} we have for $t \in [0,1]$  
\begin{align}\label{e:cor-bound}
|Y(t) | \leq C \max \Big\{  \min \{  t^{-\frac{1}{m-1}} , |y_0| \}   \; ,\max_{h \in \rtree{N}}[\X:h]^{\frac{1}{ (m-1)\alpha|h|}}  ,1  \Big\} \;. 
\end{align}
\item  If $\sigma$ satisfies the growth Assumption~\ref{Ass-Sigma-Polynomial} for $\gamma $ satisfying \eqref{gamma-condition-m},
then we   have for $t \in [0,1]$
\begin{align}\label{e:cor-bound-bis}
|Y(t) | \leq C \max \Big\{  \min \{ t^{-\frac{1}{m-1}} , |y_0|  \}   \;, \max_{h \in \rtree{N}} [\X:h]^{\frac{1}{ ((m-1) \alpha- \gamma+1)|h|}}  ,1 \Big\} \;.
\end{align}
\end{enumerate}
In both statements, $C< \infty$ denotes a constant that depends on $d,k,m,\alpha$ and $C_{\sigma}$.
\end{corrolaire}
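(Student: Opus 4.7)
The plan is to combine Theorems~\ref{thm:main} and~\ref{thm:main2} by splitting $(0,1]$ into at most three time regimes based on the thresholds $T_1, T_2$ of Theorem~\ref{thm:main2}, and to show that in each regime the right-hand side of the claimed estimate dominates the corresponding bound from one of the two theorems. Throughout, write $M' \eqdef \max_{h\in\rtree{N}}[\X:h]^{1/((m-1)\alpha|h|)}$ in part (1) and $M' \eqdef \max_{h\in\rtree{N}}[\X:h]^{1/(((m-1)\alpha - \gamma + 1)|h|)}$ in part (2). The basic observation is that $\jbrac{y_0} \leq \sqrt{2}\max\{|y_0|,1\}$, and that for $t\leq T_1 = \epsilon_1/\jbrac{y_0}^{m-1}$ (with $\epsilon_1\leq 1$) one has $t^{-1/(m-1)} \geq \jbrac{y_0}\geq |y_0|$, so $\min\{t^{-1/(m-1)},|y_0|\} = |y_0|$, whereas for $t>T_1$ one has $t^{-1/(m-1)} \leq \epsilon_1^{-1/(m-1)}\jbrac{y_0} \lesssim \max\{|y_0|,1\}$.

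\emph{Regime $t\leq T_1\wedge T_2$.} Theorem~\ref{thm:main2} gives $|Y(t)|\leq \jbrac{Y(t)}\leq 2\jbrac{y_0} \lesssim \max\{|y_0|,1\}$, and by the observation above this is bounded by the right-hand side of \eqref{e:cor-bound} (resp.\ \eqref{e:cor-bound-bis}).

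\emph{Regime $t>T_1$.} Theorem~\ref{thm:main} gives $|Y(t)|\leq C\max\{t^{-1/(m-1)}, M\}$, where in part~(1) $M=\max_h[\X:h]^{1/(m\alpha|h|)}$ and in part~(2) $M=M'$. The inequality $t^{-1/(m-1)} \lesssim \max\{\min\{t^{-1/(m-1)},|y_0|\},1\}$ is immediate from the two-case analysis above ($t^{-1/(m-1)}\leq |y_0|$ versus $t^{-1/(m-1)}>|y_0|$). For part~(1) we also note that $1/(m\alpha|h|)<1/((m-1)\alpha|h|)$ gives $M\leq \max\{M',1\}$ by splitting according to whether $[\X:h]\geq 1$ or not; in part~(2) we have $M=M'$ directly.

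\emph{Regime $T_2<t\leq T_1$.} Here $\min\{t^{-1/(m-1)},|y_0|\}=|y_0|$, so it suffices to bound the Theorem~\ref{thm:main} estimate by $C\max\{|y_0|,M',1\}$. The bound $t^{-1/(m-1)}<T_2^{-1/(m-1)}$ together with the definition of $T_2$ yields in part~(1) directly
\begin{equation*}
t^{-1/(m-1)} \lesssim \max_{h}[\X:h]^{1/((m-1)|h|\alpha)} = M'.
\end{equation*}
The main obstacle is the corresponding step in part~(2), where
\begin{equation*}
t^{-1/(m-1)} < T_2^{-1/(m-1)} \lesssim \jbrac{y_0}^{(\gamma-1)/((m-1)\alpha)}\cdot\max_{h}[\X:h]^{1/((m-1)|h|\alpha)}
\end{equation*}
and one must absorb the $\jbrac{y_0}$ factor. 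This is handled by a Young-type inequality:
\begin{equation*}
u^{(\gamma-1)/((m-1)\alpha)}\,v^{1/((m-1)\alpha)} \leq \max\bigl\{u,\,v^{1/((m-1)\alpha-\gamma+1)}\bigr\} \qquad \text{for all } u,v\geq 0,
\end{equation*}
whose two-case verification (comparing $u$ to $v^{1/((m-1)\alpha-\gamma+1)}$) uses exactly the identities $rp+q=r$ and $p+q/r=1$ with $p=(\gamma-1)/((m-1)\alpha)$, $q=1/((m-1)\alpha)$, $r=1/((m-1)\alpha-\gamma+1)$. It is precisely here that the subcriticality assumption $\gamma<(m-1)\alpha+1$ of \eqref{gamma-condition-m} is used, ensuring $r>0$. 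Applying this with $u=\jbrac{y_0}$ and $v=\max_h[\X:h]^{1/|h|}$ yields $t^{-1/(m-1)}\lesssim \max\{\jbrac{y_0},M'\}\lesssim \max\{|y_0|,M',1\}$, finishing the intermediate regime.

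Assembling the three cases gives \eqref{e:cor-bound} and \eqref{e:cor-bound-bis}; the dependence of $C$ on $d,k,m,\alpha,C_\sigma$ is inherited from Theorems~\ref{thm:main} and~\ref{thm:main2} together with the constants $\epsilon_1,\epsilon_2$.
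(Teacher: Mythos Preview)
Your proof is correct and follows essentially the same approach as the paper: split according to the thresholds $T_1,T_2$, apply Theorem~\ref{thm:main2} for $t\le T_1\wedge T_2$ and Theorem~\ref{thm:main} otherwise, replace the exponent $\frac{1}{m\alpha|h|}$ by $\frac{1}{(m-1)\alpha|h|}$ at the cost of the extra $1$ in the maximum, and in the polynomial case absorb the factor $\jbrac{y_0}^{(\gamma-1)/((m-1)\alpha)}$ via the same Young-type inequality (the paper phrases it additively as $ab\le a^p+b^q$ with $p=\frac{(m-1)\alpha}{\gamma-1}$, $q=\frac{(m-1)\alpha}{(m-1)\alpha-\gamma+1}$, which is equivalent to your max formulation). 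The only cosmetic difference is that the paper organises the large-time regime as ``$t\ge T_1$ or $t\ge T_2$'' rather than your two separate regimes ``$t>T_1$'' and ``$T_2<t\le T_1$''.
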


\begin{remark}
Consider the case $\sigma = 0$, where \eqref{RDE3} reduces to the ordinary differential equation
\begin{align*}
\dot{Y}  &= - |Y|^{m-1} Y  \qquad \qquad t \in (0,1) \\
 Y(0)  & = y_0\;.
\end{align*}
The explicit solution of this equation is given by 
\begin{align}\label{e:solution_of_ODE}
Y(t) =  \frac{y_0}{|y_0|}   \Big( \frac{1}{(m-1) t  + |y_0|^{1-m} } \Big)^{\frac{1}{m-1}}\;,
\end{align}
and the elementary estimate  
\begin{align}
\frac{1}{C(m)} \min\{ |y_0|, t^{-\frac{1}{m-1}} \}  \leq   
  | \text{RHS of (\ref{e:solution_of_ODE}) }| 
\leq  C(m) \min\{ |y_0|, t^{-\frac{1}{m-1}} \} \, 
\end{align}
shows that the first term on the RHS of \eqref{e:cor-bound} and \eqref{e:cor-bound-bis} is optimal up to constants.
\end{remark}

\def\condsigma{\eqref{gamma-condition-m} }
\begin{remark}[On the optimality of the growth condition \condsigma   on $\sigma$] 
We compare our result to the case of the It\^o stochastic differential equation
\begin{equation*}
dY(t) = - |Y(t)|^{m-1} Y(t)dt + \sigma(Y(t)) dW(t) \qquad t \in (0,1] \; ,
\end{equation*}
for a Brownian motion $W$, for simplicity in the scalar case $k = n = 1$. 
It\^o's formula yields for any $p \geq 2$ 
\begin{align}\label{Ito}
d \frac{1}{p} |Y(t)|^p =  - |Y (t) |^{p-1 +m} dt +  dM_t + \frac{(p-1)}{2} |Y(t)|^{p-2} \sigma(Y(t))^2 dt \;,
\end{align}
for a (local) martingale $M_t$. Under Assumption~\ref{Ass-Sigma-Polynomial} the It\^o correction 
term on the RHS of \eqref{Ito} satisfies
\begin{align*}
\frac{(p-1)}{2} |Y(t)|^{p-2} \sigma(Y(t))^2 \leq \frac{(p-1)}{2} C_\sigma  |Y(t)|^{p-2 + 2 \gamma} \;.
\end{align*}
If the coefficient $\sigma$ satisfies the upper bound in  \eqref{gamma-condition-m} for an $\alpha < \frac12$, then the exponent on the RHS satisfies 
\[
p-2 + 2 \gamma < p -1 + m\; ,  
\]
which is precisely enough to control this term by the damping term $ - |Y (t) |^{p-1 +m}$. This calculation suggests 
the optimality of Assumption~\ref{Ass-Sigma-Polynomial}, at least in this case.
\end{remark}

\begin{remark}[More on the growth condition \condsigma   on $\sigma$] 
The reason for the limitation $\gamma \geq 1$ in \eqref{gamma-condition-m} is that in several points in the proof (see e.g. \eqref{an-example}, \eqref{e-Uestimate-simplified}) the estimate
\[
\sup_{s \in I} |Y(s)|^{(\gamma-1)|h|} \leq \| Y \|_{[t^\star, 1]}^{(\gamma-1)|h|}, 
\]
is used crucially. Here $I $ is some (short) interval contained in the (possibly much larger) $[t^\star, 1]$ and $|h| $ is a positive number. 
Clearly, this estimate only holds for $\gamma \geq 1$.
For $\gamma \leq 1$  our proof could be modified slightly to get the estimate 
\begin{align*}
|Y(t) | \leq C \max \Big\{  t^{-\frac{1}{m-1}}  \;, \max_{h \in \rtree{N}} [\X:h]^{\frac{1}{ ((m-1) \alpha)|h|}} \Big\} \;.
\end{align*}
Unfortunately, this bound does not reflect the behaviour of $\sigma$ and its derivatives at infinity in the exponents of $ [\X:h]$. 
\end{remark}

\begin{remark}[On optimality of exponents]
\label{exponents}
A particularly important example, where our result applies is the rough path $\X$ built on top of a \emph{fractional 
Brownian motion} $X$  of Hurst parameter $H > \frac14$. It is well-known, see e.g. \cite{MR2667703}, that in this case 
$\X$ is an $\alpha$-rough path for $\alpha < H$ and for $h \in \rtree{N}$ we have the Gaussian tail estimate
\begin{equation}\label{Fernique}
\E \Big[\exp\big( \epsilon [\X:h]^{\frac{2}{|h|}} \big)  \Big] < \infty,
\end{equation} 
for $\epsilon > 0$ small enough. In this case, the estimates \eqref{e:thm-bound} and \eqref{e:thm-bound-bis} imply 
stretched exponential tails for the solution $Y$ of the random rough differential equation \eqref{RDE3}. More precisely, 
\eqref{e:thm-bound}  combined with \eqref{Fernique} implies that under Assumption~\ref{Ass-Sigma-Bounded} we have
that for $\epsilon'>0$ small enough 
\begin{align*}
\E \Big[\exp\Big( \epsilon'  \sup_{t \in [\frac12,1] } |Y(t) |^{2\alpha m) }   \Big)  \Big] < \infty \;,
\end{align*}
while \eqref{e:thm-bound-bis} combined 
with \eqref{Fernique} implies that under Assumption~\ref{Ass-Sigma-Polynomial} and \eqref{gamma-condition-m} we have
\begin{align*}
\E \Big[\exp\Big( \epsilon'  \sup_{t \in [\frac12,1] } |Y(t) |^{2 (\alpha (m-1)  -\gamma +1)}   \Big)  \Big] < \infty \; .
\end{align*}
We do not know if these stretched exponential tails are optimal. The additive noise case, $\sigma =1$ was considered 
in \cite[Section 2]{MW}, and there the improved stochastic integrability 
\begin{align*}
\E \Big[\exp\Big( \epsilon'  \sup_{t \in [\frac12,1] } |Y(t) |^{2+2\alpha (m-1) }   \Big)  \Big] < \infty,
\end{align*}
was obtained. Additionally, in this case comparison with an It\^o SDE shows the optimality of exponent, at least in the 
case of Brownian motion. Martingale estimates 
suggest that in the case of a Brownian motion $W$ and in the case of bounded $\sigma$ the stochastic integrability 
in the multiplicative noise case should coincide with the additive noise case, but our argument does not imply this. 
The main technical reason  is, that in the proof we need to impose the condition \eqref{e:step2-condition} on the parameters
and this condition does not have a counterpart in the simpler argument for the additive noise case. 
\end{remark}

Our  argument follows the philosophy developed in  \cite{MW,moinat2018spacetime,ch2019priori} in the context of the dynamic $\phi^4$ model.  
These articles deal primarily with singular stochastic partial differential equations, but as already mentioned above in \cite[Section 2]{MW} the case of an ordinary differential equation with irregular driver is discussed. 
More precisely, an estimate for the equation  
\begin{equation}\label{NL1}
Y(t) = Y(0) - \int_0^t |Y(s)|^{m-1} Y(s) ds + Z(t)   %\qquad t \in [0,T],
\end{equation}
for $Z$ of H\"older regularity $\alpha \in (0,1)$ is derived. 
The argument  roughly goes as follows: first the equation is regularised by convolution with a mollification kernel at scale $L$, resulting in a 
non-closed equation for the regularised function $Y_L$,  
\[
\dot Y_L = -|Y_L|^{m-1}Y_L + Z_L + E_L
\]
  which involves an error term $E_L$ accounting for the fact that  non-linearity and smoothing do not commute. 
Afterwards, a standard ODE estimate  (see e.g. \cite[Lemma 3.8]{TW}) is applied to the regularised equation, and finally, the commutator error $E_L$  is controlled by a simple \emph{regularity estimate} for $Y$. 
The following lemma is an intermediate step in this analysis and the starting point for our argument:
\def\MWL4{\cite[Lemma 2.1, Intermediate result in Step 4]{MW}}
\begin{lemme}[\MWL4]
\label{lemma:MW}
Let $m>1$ and let $Y \in C([0,1], \R^d)$ satisfy equation \eqref{NL1}
for some $Z \in C([0,1], \R^d)$ with $Z(0)=0$. Then for $\alpha \in (0,1)$, $t \in (0,1]$ and $L \in (0,t)$  we have
\begin{align}
\notag
|Y(t)|  
\leq C \max \Big\{ 
	 & (t-L)^{-\frac{1}{m-1}}; 
	 \Big(  \sup_{s \in [L,t] } [Z]_{\alpha, [s-L,s]} L^{\alpha -1}\Big)^{\frac{1}{m}}; 
	\\&\label{NL2} 
	\Big(L^\alpha \| Y \|_{[0,t] }^{m-1}  \sup_{s \in [L,t] } [Y]_{\alpha,[s-L,s]}    \Big)^{\frac{1}{m}} ; 
	  \Big( L^\alpha  [Y]_{\alpha,[t-L,t]} \Big)    \Big\}.
\end{align}
Here $C =C(m)$ and
 \[
 \| Y \|_{[0,t] } = \sup_{s\in [0,t]} |Y(s)| \qquad \text{and} \qquad [Z]_{\alpha,[s-L,s]} = \sup_{s-L \leq s_1<s_2 \leq s } \frac{ |Z(t) - Z(s) |}{ |s_2- s_1|^{\alpha}}.
 \]
\end{lemme}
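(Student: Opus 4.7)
The strategy is the one developed in \cite{MW}: regularise the equation at a scale $L$ by convolution, apply a standard coming-down-from-infinity estimate to the resulting approximate ODE, and control the commutator between smoothing and the non-linearity, together with the difference $Y - Y_L$, by the H\"older regularity of $Y$ and $Z$.

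Concretely, I fix a smooth one-sided kernel $\phi_L$ supported in $[-L,0]$ with $\int \phi_L = 1$, and set $Y_L(s) = (Y\ast \phi_L)(s)$ and $Z_L(s) = (Z\ast \phi_L)(s)$, which are well-defined for $s \in [L, t]$. Convolving equation \eqref{NL1} yields
\begin{equation*}
\dot Y_L(s) \;=\; -|Y_L(s)|^{m-1} Y_L(s) \;+\; \dot Z_L(s) \;+\; E_L(s),
\end{equation*}
where the commutator $E_L(s) = |Y_L(s)|^{m-1} Y_L(s) - \bigl((|Y|^{m-1}Y)\ast \phi_L\bigr)(s)$ encodes the defect between smoothing and the non-linearity.

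Next, I would bound the three non-polynomial terms on the right-hand side purely by regularity. Using the local Lipschitz bound $|D(|y|^{m-1}y)| \lesssim |y|^{m-1}$ together with averaging over the support of $\phi_L$ produces
\begin{equation*}
|E_L(s)| \;\lesssim\; \|Y\|_{[0,t]}^{m-1} \, L^\alpha \, [Y]_{\alpha,[s-L,s]}.
\end{equation*}
Writing $\dot Z_L(s) = \int \bigl(Z(s+r) - Z(s)\bigr) \, \phi_L'(r) \, dr$ yields $|\dot Z_L(s)| \lesssim L^{\alpha-1} [Z]_{\alpha,[s-L,s]}$, while $|Y(t) - Y_L(t)| \lesssim L^\alpha [Y]_{\alpha,[t-L,t]}$ is immediate from the definition of $Y_L$.

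The heart of the argument is then a scalar ODE comparison. Testing the equation for $Y_L$ against $Y_L/|Y_L|$ and setting $u(s) := |Y_L(s)|$ and $G_0 := \sup_{s \in [L,t]}\bigl(|\dot Z_L(s)| + |E_L(s)|\bigr)$ gives the differential inequality $\dot u \leq -u^{m} + G_0$. A standard coming-down-from-infinity argument for the comparison equation $\dot v = -v^m + G_0$ delivers, independently of the value of $u(L)$,
\begin{equation*}
|Y_L(t)| \;\leq\; C(m) \, \max\!\Bigl\{ (t-L)^{-\tfrac{1}{m-1}}, \; G_0^{1/m} \Bigr\}.
\end{equation*}
Combining this with the triangle inequality $|Y(t)| \leq |Y_L(t)| + |Y(t) - Y_L(t)|$ and inserting the three regularity estimates above reproduces exactly the four terms appearing in \eqref{NL2}.

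The principal obstacle is the coming-down-from-infinity comparison: one must verify that the scalar bound on $\dot v \leq -v^m + G_0$ holds uniformly over the (possibly very large) initial datum $v(L)$, and that it survives replacement of the constant forcing $G_0$ by the time-dependent sum $|\dot Z_L(s)| + |E_L(s)|$. Beyond this, balancing the powers of $L$ in the four terms of \eqref{NL2} is essentially bookkeeping once the regularity decomposition is fixed; the later argument in the rough-path setting will repeat this structure with the remainder of a controlled rough path playing the role of $\dot Z_L$.
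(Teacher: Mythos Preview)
Your proposal is correct and follows precisely the approach the paper sketches: the paper does not prove this lemma itself but cites it from \cite{MW}, giving only the outline you reproduce (mollify at scale $L$, obtain $\dot Y_L = -|Y_L|^{m-1}Y_L + \dot Z_L + E_L$, control the commutator $E_L$ and $\dot Z_L$ by H\"older regularity, apply a standard ODE coming-down estimate, and finish with $|Y(t)-Y_L(t)|$). Your treatment of each step, including the scalar comparison $\dot u \le -u^m + G_0$, matches what the cited argument does.
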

In order to apply this Lemma to the rough differential equation~\eqref{RDE3} it thus remains to control $\alpha$-H\"older norms of $Y$ and of the rough integral 
\begin{align*}
Z(t) = \int_0^t \sigma(Y(s)) dX(s) \; .
\end{align*}
Gubinelli's Sewing Lemma, see Proposition~\ref{prop:integral_bound} below, yields a control $Z$ in terms of the order bounds $[\X:f]$ as well as the
appropriate semi-norms $[\sigma(\Y):f]$ that control the regularity of the \emph{controlled path} $\sigma(\Y)$  at various levels $f$ ($f$ takes values in a set of \emph{decorated forests}, see below). 
A key step of our argument is the development of an algebraic framework which yields an explicit expression, Corollary~\ref{cor:niceformula}, for the 
remainder expressions (we note that a similar formula appeared earlier in \cite[Theorem 5.2]{G06}). 

This expression can then be combined with a Taylor remainder formula yielding a control 
on $[\sigma(\Y):f]$ in terms of $[Y:\one]$, the supremum norm of $Y$ as well as the various   $[\X:h]$. 
This bound furthermore depends linearly on $[Y:\one]$. 
The argument can then be closed, because the output of the Sewing Lemma gives a control on $[Y:\one]$ permitting to absorb this term, provided 
one works on an interval of small enough size. 
Summarising, this argument can then be used to control the terms $[Y]_\alpha$ and $[Z]_\alpha$ on the right hand side of \eqref{NL2} resulting in an estimate that can be iterated.

We finally point out some connections with past and recent work.
We first point out that in the setting of stochastic differential equations estimates   that show "coming down from infinity" akin to our Theorem~\ref{thm:main}  are known. See, for example, \cite[Proposition~1.2.7]{cerrai01}  for a such a statement, which seems however to impose a  slightly more restrictive growth condition on $\sigma$ than our \eqref{gamma-condition-m}.
Turning to the RDEs literature, it was already noticed in \cite[Proposition~3.8 and Lemma~3.10]{HK} that for controlled rough paths which solve RDEs one can control regularity at higher levels via control at the lowest level, but our use of grafting operations allows us to do this more explicitly and in a way that we expect will generalise from branched rough paths to the theory of regularity structures.  
As this work was nearing completion we also learned of the preprint \cite{BayerFrizTapia} which develops combinatorial/algebraic methods for comparing remainders quite close to what we use here based on brace algebras \cite{MR2724224}, \cite{OG}. 
\subsection{Outline of paper}
Section~\ref{s:BranchedRoughPaths} starts by reviewing the key elements of the theory of branched rough paths \cite{G06,HK} before closing with a discussion of coherence/elementary differentials - these allow us to describe what the expansions that solve fixed point problems need to look like.   
Section~\ref{s:coherence_and_remainder} starts by introducing grafting operations. 
We then show that this grafting operation behaves well with co-products (Theorem~\ref{thm:graft_adjoint}) and with coherence (Corollary~\ref{cor:niceformula}) - we then finally use these relations to obtain the remainder estimate Corollary~\ref{cor:one_controls_others}. 
Finally, Section~\ref{equ} finishes the proof of our main result, Theorem~\ref{thm:main}, and also proves Theorem~\ref{thm:main2} and Corollary~\ref{coro}. 
\subsection{Notation}
Given a natural number $m \ge 1$, we write $[m] = \{1,\dots, m\}$.
For any set $A$, we write $\Span(A)$ for the free vector space over $\mathbb{R}$ generated by $A$, that is all finite linear combinations of elements of $A$ with real coefficients. 
Given $p \in \mathbb{N}$ and a $p$-times differentiable functionable $\sigma: \mathbb{R}^{\dimY} \rightarrow \mathbb{R}^{\dimY}$, we write $D^{p}\sigma$ for the $p$-th derivative which is a $p$-linear form on $\mathbb{R}^{\dimY}$ taking values in $\mathbb{R}^{\dimY}$.
We also use standard notation for multi-indices. Given some finite set $A$, a multi-index $\ell = (\ell_{a} : a \in A) \in \mathbb{N}^{A}$, and sufficiently regular function $f: \mathbb{R}^{A} \rightarrow \mathbb{R}^{m}$, we write 
\[
\partial^{\ell}f
=
\big( \prod_{a \in A}\partial_{a}^{\ell_{a}} \big) f\;.\]
where $\partial_{a}$ is the partial derivative in the $a$-component.  
We also write $\ell! = \prod_{a \in A} (\ell_{a}!)$. 

For any $n \ge 1$, $\alpha \in (0,1)$,  interval $I \subseteq [0,1]$, and function $X:I \rightarrow \R^{n}$ we write 
\[
\jbrac{X}_{I} = \sup_{s \in I} \jbrac{X(s)}\;, \qquad 
 \| X \|_{I } = \sup_{s\in I} |X(s)|\;, \qquad \text{and} \qquad [X]_{\alpha,I} = \sup_{ s_1<s_2 \in I } \frac{ |X(t) - X(s) |}{ |s_2- s_1|^{\alpha}}\;.
 \]

Throughout the paper we write $\lesssim$ for $\leq C$, where the implicit constant $C$  may depend on $d,k,m,\alpha$ and $C_{\sigma}$.
and similarly for any interval $I \subset [0,1]$ and function $Y: I \rightarrow \R^{n}$ we set $\jbrac{Y}_{I} = \sup_{s \in I} \jbrac{Y_{s}}$
\section{Branched Rough paths}
\label{s:BranchedRoughPaths}
\subsection{Trees and forests}\label{ss:trees}
We start by inductively defining a set of rooted, decorated trees $\mathring{\mathcal{T}}_{\all}$ along with an associated set of forests $\mathcal{F}_{\all}$. 

A forest is a possibly empty, but finite, collection of rooted trees where we allow for multiple instances of the same tree. 
It is customary to write forests as (commutative) products, that is 
\begin{equation}\label{eq:forest}
f = \prod_{i \in I} h_{i}
\end{equation}
for some index set $I$ and $h_{i} \in \rCT_{\all}$. 
The empty forest will play a special role for us and we represent it with the symbol $\ind \in \mathcal{F}_{\all}$. 
The set $\mathring{\mathcal{T}}_{\all}$ is  generated by taking a forest of trees $f \in \mathcal{F}_{\all}$ and joining them all to a new root with a decoration $\mu \in [\dimX]$, we write the corresponding tree as $[f]_{\mu}$. 
Note that we have a natural identification $\rCT_{\all} \subset \mathcal{F}_{\all}$. 
We also remark that a forest $f \in \mathcal{F}_{\all}$ is also completely specified by a multi-index $f = (f_{h} : h \in \mathring{\mathcal{T}}_{\all} ) \in \mathbb{N}^{\mathring{\mathcal{T}}_{\all}}$. 

We also inductively define a notion of "order" $|\cdot|$ for forests by setting, for $f \in \mathcal{F}_{\all} = \sum_{h \in f} |h|$ and for $h = [f]_{\mu} \in \mathring{\mathcal{T}}_{\all}$, $|h| = |f|+1$. 
Here and in what follows, when we write $\sum_{h \in f}$ or $\prod_{h \in f}$, we are taking the product or sum over all the trees appearing on the right side of \eqref{eq:forest} \textbf{with multiplicity}. 

The order of a forest is then simply the number of nodes in that forest. 
The trees with a single node are just of the form $ [\ind]_{j}$ for some $j \in [d]$, below we draw such a tree by writing $\bullet_{j}$.
We would then have $[\bullet_{j}]_{i} = \<leg_black>$. 
We can draw forests by putting trees next to each other, with the order being unimportant, for instance
\[
\bullet_{k} \<leg_black> = \<leg_black> \bullet_{k} \in \mathcal{F}_{\all}
\quad
\textnormal{and}
\quad 
\Big[ 
\bullet_{k} \<leg_black> 
\Big]_{l} 
=
\<bigleg_black>
=
\<bigleg2_black>
\in
\mathring{\mathcal{T}}_{\all}
\;.
\]
We write $\#f = \sum_{h \in f} 1$ for the number of trees in $f$. 
Given two forests $f, \bar{f}$ we write $f \cdot \bar{f}$ for the forest defined by concatenating the two products of trees.
Note that $\ind$ plays the role of the unit for the forest product and that both $|\cdot |$ and $\#$ are additive over forest products.
By extending the forest product to linear combinations of forests via linearity, $\Span(\CF_{\all})$ becomes a unital commutative algebra with unit $\ind$.
By applying the product component-wise the same is true of $\Span(\CF_{\all}) \otimes \Span(\CF_{\all})$.
We write $\scal{\bullet}{\bullet}$  for the inner product on $\Span(\CF_{\all})$ with $\scal{f}{f'} = 1\{f = f'\}$ for all $f,f' \in \CF_{\all}$. 
We also denote by $\scal{\bullet}{\bullet}$ the corresponding inner product on $\Span(\CF_{\all}) \otimes \Span(\CF_{\all})$. 
Finally, we write, for any $n \in \mathbb{N}$, the linear projection $P_{\le n}: \Span(\CF_{\all}) \rightarrow \Span(\CF_{\all})$ that simply annihilate any forest $f \in \CF_{\all}$ with $|f| > n$. 

\subsection{Coproducts}
\begin{definition} 
The coproduct $\Delta$ is the map 
\[
\Delta: \Span(\CF_{\all}) \rightarrow \Span(\CF_{\all}) \otimes \Span(\CF_{\all})\;,
\] 
defined inductively:
\begin{itemize}
\item $\Delta \ind = \ind \otimes \ind$. 
\item For $[f]_{\mu}\in \mathring{\mathcal{T}}_{\all}$ we set $\Delta [f]_{\mu}=  [f]_{\mu}\otimes \ind + (\id \otimes [ \bullet ]_{\mu}) \Delta f$. 
\item For $f \in \CF_{\all}$, $\Delta f = \prod_{h \in f} \Delta h\;.$
\end{itemize} 
\end{definition} 
Note that this is just the Connes-Kreimer coproduct \cite{CK}. 
We note that the co-product preserves the number of nodes in the sense that, for $\bar{f},\tilde{f},f \in \CF_{\all}$, we have $\scal{ \Delta \bar{f}}{\tilde{f} \otimes f} \not = 0 \Rightarrow |\bar{f}| = |\tilde{f}| + |f|$.  

\subsection{Iterated integrals}

The set of trees  $\mathring{\mathcal{T}}_{\all}$ will be used to index postulated values of iterated integrals of $X = (X^{i})_{i=1}^{\dimX}$, for instance if $X$ is a smooth path, then all these iterated integrals can be canonically and the natural way to formalise our ``canonical'' assignment of iterated integrals is to define for $\mu  \in [\dimX]$ and  $h = [f ]_{\mu}$, 
\[
\scal{\X_{s,t}}{h}=\int_s^t
\scal{ \X_{s,r}}{f}dX_r^{\mu}\;.
\]
In this case we have
\begin{equation}\label{eq:example_trees}
\begin{split}
\scal{ \X_{s,t}}{ \bullet_{j}}  
=&
\int_{s}^{t}
dX^{j}_{r}
=
X^{j}_{t} - X^{j}_{s}\;, \quad 
\scal{\X_{s,t}}{\<leg_black>}
=
\int_{s}^{t}  \int_{s}^{r} dX^{j}_{u}  dX^{j}_{r}\;,\\
\scal{ \X_{s,t}}{ \<cube_black>} 
=&
\int_{s}^{t}
\big( \int_{s}^{r}  dX^{j}_{u_1} \big)
\big( \int_{s}^{r}  dX^{l}_{u_2} \big)
\big( \int_{s}^{r}  dX^{n}_{u_3} \big)
dX^{i}_{r}\;.
\end{split}
\end{equation}

Expansions indexed by such trees/iterated integrals naturally appear when solving differential equations by writing them as integral equations and iterating, for instance, in the case $\dimY = \dimX = 1$ we can generate a formal expansion for the solution for \eqref{RDE} as
\begin{equation*}%\label{eq:example_expansion}
\begin{split}
&Y_{t}- Y_{s}\\
&= \int_{s}^{t} \sigma(Y_{r}) dX_{r} \\
&=
\sigma(Y_{s})  \int_{s}^{t} dX_{r}
+
 D\sigma(Y_{s}) 
 \int_{s}^{t}  (Y_{r} - Y_{s}) dX_{r}
 +
 \frac{D^{2}\sigma(Y_{s})}{2} 
 \int_{s}^{t}  (Y_{r} - Y_{s})^{2} dX_{r} + \cdots\\
&=
\sigma(Y_{s})  \int_{s}^{t} dX_{r}
+
 D\sigma(Y_{s}) 
 \int_{s}^{t}   \int_{s}^{r} \sigma(Y_{u}) dX_{u} dX_{r}
 +
 \frac{D^{2}\sigma(Y_{s})}{2} 
 \int_{s}^{t}  \big(  \int_{s}^{r} \sigma(Y_{u}) dX_{u} \big)^{2} dX_{r} + \cdots\\
&
=
\sigma(Y_{s})  \int_{s}^{t} dX_{r}
+
\big( D\sigma(Y_{s}) \big) \sigma(Y_{s}) 
\int_{s}^{t} \int_{s}^{r}  dX_{u}dX_{r}
+
 \frac{D^{2}\sigma(Y_{s})}{2} \big( \sigma(Y_{s})  \big)^{2}
 \int_{s}^{t} \big( \int_{s}^{r} dX_{u} \big)^{2} dX_{r} +  \cdots\\
&
=
\sigma(Y_{s}) \scal{ \X_{s,t}}{\bullet}
+
\big( D\sigma(Y_{s}) \big) \sigma(Y_{s})  \scal{ \X_{s,t}}{\<legnolabel_black>}
+
 \frac{D^{2}\sigma(Y_{s})}{2} \big( \sigma(Y_{s})  \big)^{2}
 \scal{ \X_{s,t}}{\<squarenolabel_black>}
+  \cdots\;,
\end{split}
\end{equation*}
where we suppress the labels here in the one dimensional case - a similar expansion can be generated in the multi-dimensional case. 
Forests keep track of products of iterated integrals, for instance if $f$ is given as above then $\scal{\X_{s,t}}{f} = \prod_{h \in f} \scal{\X_{s,t}}{h}$. 

Note that if $X$ is a smooth function, then the Leibniz rule imposes relations on iterated integrals which would allow one to restrict to trees that are given by chains putting one in the setting of \emph{geometric rough paths}. 
However, the framework of branched rough paths allows us to make sense of this differential equation even when $X$ is too irregular for the right hand sides of \eqref{eq:example_trees} to be well-defined, the cost is that quantities like those on the left hand sides of \eqref{eq:example_trees} need to be postulated as part of the well-posedness problem. In practice such data is constructed via stochastic techniques. 
In particular, the natural choice of values for these iterated integrals may not satisfy Leibniz rule, which is why we allow for trees that branch instead of just chains. 
\subsection{Important sets of trees and forests} 
\label{s:sets_of_trees}
We define, for $j \in \{N-2,N-1,N\}$, 
\begin{equation*} 
\begin{split}
\rtree{j} &= \{ h \in \mathring{\mathcal{T}}_{\all}: |h| \le j \}\;,\;
\tree{j} = \rtree{j} \cup \{\ind\}\;\\
\CF &=\{ f \in \mathcal{F}_{\all}: h \in f \Rightarrow h \in \rCT_{\le N} \}  \subset \mathcal{F}_{\all}\;,\;
\forest{j} = \{f \in \mathcal{F}: |f| \le j \}\;.
\end{split}
\end{equation*}
Note that $\Span(\CF)$ is a sub-algebra of $\Span(\CF_{\all})$.   
We write $\Span(\CF)^{\ast}$ for all linear functionals from $\Span(\CF)$ to $\R$. Given $u \in \Span(\CF)^{\ast}$ and $a \in \Span(\CF)$ we overload notation and write $\scal{u}{a}$ for the corresponding duality pairing.
We define $\Char \subset \Span(\CF)^{\ast}$ to be the set of real characters on $\Span(\CF)$, that is the set of all algebra homomorphisms from $\Span(\CF)$ to $\R$.
\subsection{Character paths}
\begin{definition}
A character path $\X$ is a function $\X_{\bullet,\bullet}: [0,T]^{2} \rightarrow \Char$, written $
(s,t) \mapsto
\X_{s,t}$. 
\end{definition}
\begin{remark}\label{rem:trees_versus_forests2}
We can now motivate some of the definitions of sets of trees and forests we introduced earlier. 

The set  $\tree{N-1}$ is used as index set for local expansions of the solution $Y$ to \eqref{RDE3}, while $\forest{ N-1} $ is used as index set for expansions of $\sigma_{\mu}(Y(t))$ for $\mu \in [\dimX]$.

The crucial data regarding ``iterated integrals'' of the $X$ is indexed by the trees $\rtree{N}$. 
In particular, while the trees of order $N$ never explicitly appear in expansions, the fact that $\X$ is defined on those trees and satisfies an appropriate H\"{o}lder bound there is crucial - see the Sewing Lemma - Proposition~\ref{prop:integral_bound} (\cite{G03,G06}).

Extending $\X$ multiplicatively means that $\CF$ is the natural set of objects for $\X$ to act on.   
We note that any character path $\X$ satisfies $\scal{\X_{s,t}}{\ind} = 1$. 
\end{remark}
\begin{definition}
A character path satisfies Chen's relation if, for every $s,u,t \in [0,T]$ and $h \in \CT$, 
\begin{equation*}%\label{eq:chen}
\scal{(\X_{s,u} \otimes \X_{u,t})}{\Delta h} =
\scal{\X_{s,t}}{h} \;.	
\end{equation*}
\end{definition}
Note that above, we overload the notation $\scal{\bullet}{\bullet}$ for tensor products by setting $\scal{ u \otimes v}{a \otimes b} = \scal{u}{a}\scal{v}{b}$ and then extending by linearity. 
Given a character path $\X$ and $f \in \CF$ we define 
\begin{equation}
\label{order-norm}
[\X: f] :=
\sup_{
0 \le s,t \le 1
\atop
s \not = t}
\frac{|\scal{\X_{s,t}}{f}|}{|s-t|^{\alpha |f|}}\;.
\end{equation}
The exponent $\alpha$ above corresponds to the imposed regularity condition on the driving noise. 
Note that one has the obvious bound $[\X: f] \le \prod_{h \in \CF}
[\X : h]$.
\begin{definition}
\label{def-RP}
A (branched) $\alpha$-rough path is a character path $\X$ that satisfies Chen's relation and has the property that $[\X:h] < \infty$ for every $h \in \rCT$.
\end{definition} 
We usually drop the word branched below, just calling $\X$ an $\alpha$-rough path. 
\begin{proposition}
If $\X$ is the canonical lift of a smooth function $X$ then $\X$ is an $\alpha$-rough path for any $\alpha \in (0,1]$.  
 \end{proposition}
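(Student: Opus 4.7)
The plan is to verify in turn the three conditions required by Definition~\ref{def-RP}: that $\X_{s,t}$ is a character for each pair $(s,t)$, that Chen's relation holds on every tree, and that the seminorms $[\X:h]$ are finite for every $h \in \rCT$. The character property is built into the canonical lift: one first defines $\scal{\X_{s,t}}{[f]_\mu} = \int_s^t \scal{\X_{s,r}}{f}\, dX_r^\mu$ inductively on trees and then extends to forests by $\scal{\X_{s,t}}{f_1 \cdot f_2} = \scal{\X_{s,t}}{f_1}\scal{\X_{s,t}}{f_2}$, which makes $\X_{s,t}$ an algebra homomorphism by construction, so this point is automatic.

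For Chen's relation I would proceed by induction on the order $|h|$. The case $h = \ind$ is immediate since both sides equal $1$. For the inductive step take a tree $h = [f]_\mu$ and use the defining identity
\[
\Delta [f]_\mu = [f]_\mu \otimes \ind + (\id \otimes [\bullet]_\mu)\Delta f.
\]
Pairing with $\X_{s,u} \otimes \X_{u,t}$, using $\scal{\X_{u,t}}{\ind} = 1$, and applying the inductive definition of $\X$ on trees of the form $[g]_\mu$ gives
\[
\scal{\X_{s,u} \otimes \X_{u,t}}{\Delta [f]_\mu} = \scal{\X_{s,u}}{[f]_\mu} + \int_u^t \scal{(\X_{s,u} \otimes \X_{u,r})}{\Delta f}\, dX^\mu_r.
\]
The inductive hypothesis together with the multiplicativity of both $\Delta$ and $\X$ then identifies $\scal{(\X_{s,u} \otimes \X_{u,r})}{\Delta f} = \scal{\X_{s,r}}{f}$; indeed, the tree-level version of Chen's relation implies its forest-level counterpart since characters and the coproduct both respect the forest product. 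Rewriting $\scal{\X_{s,u}}{[f]_\mu}$ as $\int_s^u \scal{\X_{s,r}}{f}\, dX^\mu_r$ and concatenating the two integrals over $[s,u]$ and $[u,t]$ yields $\int_s^t \scal{\X_{s,r}}{f}\, dX^\mu_r = \scal{\X_{s,t}}{[f]_\mu}$, which is what we wanted.

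For the Hölder bound, smoothness of $X$ on $[0,1]$ provides a constant $K < \infty$ with $|dX_r^\mu/dr| \le K$ for every $r \in [0,1]$ and $\mu \in [\dimX]$. A straightforward induction on $|h|$ based on the iterated-integral definition produces
\[
|\scal{\X_{s,t}}{h}| \le \frac{K^{|h|}}{|h|!} |t-s|^{|h|} \qquad \text{for every } h \in \rCT_\all,
\]
with the factorial arising from nested integration at each level, and the bound extends to forests by multiplicativity. Since $\alpha \le 1$ and $|t-s| \le 1$, we have $|t-s|^{|h|} \le |t-s|^{\alpha|h|}$, hence $[\X:h] \le K^{|h|}/|h|! < \infty$ for every tree $h$. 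The only step of any real content is Chen's relation; the main obstacle there is the bookkeeping needed to promote the tree-level induction to forests, which is handled cleanly by the observation that both $\Delta$ and $\X_{s,t}$ are multiplicative on the forest algebra.
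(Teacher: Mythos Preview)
The paper states this proposition without proof, treating it as a standard fact from the branched rough path literature (see e.g.\ \cite{G06,HK}); your argument supplies what the paper omits and is essentially correct. The verification of the character property and Chen's relation is exactly the standard one, and the induction promoting Chen from trees to forests via multiplicativity of both $\Delta$ and $\X_{s,t}$ is the right mechanism.

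One small correction on the analytic side: the precise constant $K^{|h|}/|h|!$ in your H\"older bound is not quite what the induction produces. When $h = [f]_\mu$ with $f = \prod_i h_i$, the inductive step gives a factor $\big(\prod_i C_{h_i}\big)/|h|$ rather than $1/|h|!$; the correct combinatorial constant is the \emph{tree factorial} $h!$ (defined recursively by $[f]_\mu! = |h| \prod_i h_i!$), not the ordinary factorial of the number of nodes. This does not affect the conclusion, since all that is required is $|\scal{\X_{s,t}}{h}| \le C_h |t-s|^{|h|}$ for some finite $C_h$, and that follows immediately from your induction regardless of the exact constant.
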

\subsection{Controlled paths}
\begin{definition}
We call a map $ \Y:[0,T] \rightarrow \Span(\tree{ N-1})^{\dimY}$ a tree path and a map $\Y:[0,T] \rightarrow \Span(\forest{N-1})^{\dimY}$ a forest path. 
\end{definition}
We apply operations to tree/forest paths component wise, writing expressions like $\Delta \Y_{s} \in (\Span(\forest{N-1}) \otimes \Span(\forest{ N-1}))^{\dimY}$ and so on. 

We usually use the removal of boldface to indicate the ``$\ind$ - component'' of a tree path or forest path $\Y$, for instance we write $Y_{s} = \scal{\ind}{\Y_{s}} \in \R^{\dimY}$ and define $\delta \Y_{s}$ so that 
\begin{equation}\label{decomposition}
\Y_{s}  = Y_{s} \ind + \delta \Y_{s}\;.
\end{equation}
\begin{remark}
Following the thread from Remark~\ref{rem:trees_versus_forests2}, a tree path $\Y$ is used as an expansion for the solution $Y$ to \eqref{RDE3} while, for each $\mu \in [\dimX]$, a forest path  $\sigma_{\mu}(\Y)$ will be used as an expansion of $\sigma_{\mu}(Y)$.
\end{remark}
\begin{definition}\label{def-Ynorm}
Fix $I \subset [0,1]$ and let $ \X$ be an $\alpha$-rough path. 
Given a forest/tree path $ \Y$ we define, for any $f \in\forest{N-1}$, 
\[
[\Y:f]_{I}
=
\sup_{s,t \in I \atop 0 < |s-t| < 1}
\frac{|R^{\Y,f}_{s,t}|}{|t-s|^{(N-|f|)\alpha}}
\]
where the $f$-remainder of $\Y$ between $s$ and $t$, denoted $R^{\Y,f}_{s,t}$, is given by
\begin{equation*}%\label{e:def-remainder}
R^{\Y,f}_{s,t} \eqdef \scal{f}{ \Y_t} - \scal{ \X_{s,t} \otimes  f}{ \Delta \Y_s} \in \mathbb{R}^{\dimY}\;.
\end{equation*}
When $I = [0,1]$ we suppress it from that notation, i..e writing $[\Y:f]_{[0,1]} = [\Y:f]$. 
We call $\Y$ an $\X$-controlled rough forest/tree path if $\max \{[\Y :f]: f \in\forest{N-1}\} < \infty$. 
\end{definition}
As in \cite{HK} we will define the composition of a function $g \in C^{N-1}(\mathbb{R}^{\dimY},\mathbb{R}^{\dimY})$ with a tree path $\Y$ via appealing to the Taylor expansion of $g$ about $Y_{\bullet}$. 
In particular, we define the forest path $g(\Y)$ by setting
\begin{equation}\label{eq:non-linearity_of_tree_path}
g(\Y)_{s}
=
g( Y_{s})  \ind +
P_{ \le N-1}
\sum_{p = 0}^{N-1}
\frac{1}{p!}
D^{p}
g(Y_{s}) \big[
\big(\underbrace{\delta \Y_{s},\dots,\delta \Y_{s}}_{p\textnormal{-times}} \big)
\big]\;.
\end{equation}
\begin{remark}
Note that in \eqref{eq:non-linearity_of_tree_path} we naturally view $p$-linear forms on $\R^{\dimY}$ as $p$-linear forms on $\Span(\rtree{N-1})^{\dimY}$, for instance
\[
D^{p}g(Y_{s})
\big[
\big(\underbrace{\delta \Y_{s},\dots,\delta \Y_{s}}_{p\textnormal{-times}} \big)
\big]
=
\sum_{h_{1},\dots,h_{p} \in \rtree{N-1}}
D^{p}g(Y_{s}) \big[ \big( \scal{\Y_{s}}{h_{1}}, \dots, \scal{\Y_{s}}{h_{p}} \big) \big]
\prod_{i=1}^{p} h_{i}\;.
\]
\end{remark}

\subsection{Integration of controlled paths} 
We now explain how we define our rough integral that is needed to define our fixed point problem. Given a forest path $\U$, $\mu \in [\dimX]$, and $s,t \in [0,1]$, we define
\[
\Xi^{\U,\mu}_{s,t}
=
\sum_{f \in\forest{N-1}}
\scal{f}{\U_s}\scal{\X_{s,t}}{[f]_{\mu}}\;.
\]
The quantity $\Xi_{s,t}$ should be seen as a single term in a Riemann -type approximation for the rough integral.   
We can now state Gubinelli's sewing lemma \cite{G03,G06}.
\begin{proposition}
\label{prop:integral_bound}
Let $\X$ be a branched rough path and $\U$ be an $\X$-controlled forest path. Then, for any $0 \le s \le t \le 1$, we have   
\begin{equation}\label{eq:integral as a value}
\lim_{ |\mathcal{P}| \rightarrow 0} 
\sum_{\{s_{n},s_{n+1} \} \in \mathcal{P}}
\Xi^{\U,\mu}_{s_{n},s_{n+1}}
=:
\int_{s}^{t}
\U_{r} \mathrm{d}\X^{(\mu)}_{r}
\end{equation}
where the limit above is over partitions $\mathcal{P}$ of $[s,t]$ with mesh size going to $0$. 

We have the estimate
\begin{equation}\label{eq:roughintegral_bound}
\Bigg| \int_s^t \U_r \mathrm{d}\X^{(\mu)}_r - \Xi^{\U,\mu}_{s,t} \Bigg|
\le C(\alpha)|t-s|^{(N+1)\alpha}
\sum_{f\in\forest{N-1}} \sum_{\mu\in[\dimX]}
 [\X: [f]_{\mu}] [\U : f  ]_{[s,t]} \; .
\end{equation}
\end{proposition}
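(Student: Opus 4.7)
The plan is to verify the hypothesis of the standard sewing lemma for the germ $\Xi^{\U,\mu}_{s,t}$, and then invoke it. Concretely, I will compute the "coherence defect"
\[
\delta \Xi^{\U,\mu}_{s,u,t} := \Xi^{\U,\mu}_{s,t} - \Xi^{\U,\mu}_{s,u} - \Xi^{\U,\mu}_{u,t} \quad \text{for } s \le u \le t,
\]
show that it can be expressed as a linear combination of remainders $R^{\U,f}_{s,u}$ weighted by $\scal{\X_{u,t}}{[f]_\mu}$, and then quantitatively bound it by $|t-s|^{(N+1)\alpha}$ times the product of the relevant norms.

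The central computation uses Chen's relation together with the inductive definition of the Connes--Kreimer coproduct. Writing $\Delta f = \sum f_{(1)} \otimes f_{(2)}$ for $f \in \forest{N-1}$ and using
\[
\Delta [f]_\mu = [f]_\mu \otimes \ind + (\id \otimes [\bullet]_\mu) \Delta f,
\]
Chen's relation gives $\scal{\X_{s,t}}{[f]_\mu} = \scal{\X_{s,u}}{[f]_\mu} + \sum \scal{\X_{s,u}}{f_{(1)}} \scal{\X_{u,t}}{[f_{(2)}]_\mu}$. Substituting into the definition of $\Xi$ and reindexing the inner sum by the "upper" forest $g = f_{(2)}$, one recognises the combination $\scal{g \otimes \X_{s,u}}{\Delta \U_s}$ (after swapping tensor factors to match the convention of Definition~\ref{def-Ynorm}), which combines with $\scal{g}{\U_u}$ from $\Xi^{\U,\mu}_{u,t}$ to produce exactly the remainder. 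The resulting identity is
\[
\delta \Xi^{\U,\mu}_{s,u,t} = -\sum_{g \in \forest{N-1}} \scal{\X_{u,t}}{[g]_\mu}\, R^{\U,g}_{s,u}.
\]

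Now I quantitatively estimate: $|\scal{\X_{u,t}}{[g]_\mu}| \le [\X:[g]_\mu]\,|t-u|^{\alpha(|g|+1)}$ since $[g]_\mu \in \rtree{N}$ (here we use $|g| \le N-1$), and $|R^{\U,g}_{s,u}| \le [\U:g]_{[s,t]}\,|u-s|^{(N-|g|)\alpha}$. Combining and using $|u-s|, |t-u| \le |t-s|$, one obtains
\[
|\delta \Xi^{\U,\mu}_{s,u,t}| \le |t-s|^{(N+1)\alpha}\sum_{g \in \forest{N-1}} [\X:[g]_\mu]\,[\U:g]_{[s,t]}.
\]
Since $(N+1)\alpha > 1$ by definition of $N$, the standard sewing lemma (see e.g.\ \cite{G03}) applies: the Riemann-type sums in \eqref{eq:integral as a value} converge uniformly on compact subpartitions of $[s,t]$ to a limit independent of the partition sequence, and the distance between this limit and the one-step germ $\Xi^{\U,\mu}_{s,t}$ is controlled by the supremum of the coherence defect up to a universal multiplicative constant $C(\alpha) = (1-2^{1-(N+1)\alpha})^{-1}$. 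This yields \eqref{eq:roughintegral_bound}.

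I do not expect any real obstacle here: the only nontrivial part is the algebraic identity for $\delta \Xi$, which is a direct consequence of Chen's relation and the recursive form of $\Delta$. The bookkeeping of indices (splitting $\Delta f$ and summing over the complementary piece $f_{(2)}$ versus over $g$) is the one place where care is required; all subsequent estimates are routine.
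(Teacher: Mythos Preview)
Your proof is correct and is precisely the standard argument for the sewing lemma in the branched setting. The paper itself does not supply a proof of this proposition: it is stated as Gubinelli's sewing lemma with a citation to \cite{G03,G06}, so there is nothing to compare against beyond noting that your derivation of the identity $\delta \Xi^{\U,\mu}_{s,u,t} = -\sum_{g \in \forest{N-1}} \scal{\X_{u,t}}{[g]_\mu}\, R^{\U,g}_{s,u}$ from Chen's relation and the recursive coproduct formula is exactly what underlies those references. One incidental remark: your bound on $|\delta\Xi|$ involves only the fixed $\mu$, which is slightly sharper than the stated inequality \eqref{eq:roughintegral_bound} (where the sum over $\mu\in[\dimX]$ on the right-hand side is redundant); this only strengthens the conclusion.
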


Note that $\int_s^t \U_r \mathrm{d}\X^{(\mu)}_r$ is  an element of $\R^{\dimY}$, not a tree/forest path. Our notion of a solution to \eqref{RDE3} will itself be an expansion so we also want to make sense of the integration of an $\X$-controlled forest path as a tree path rather than just an element  $\R^{\dimY}$.  Therefore we define, given a branched rough path $\X$, an $\X$-controlled tree path $\Y$,  and a tuple of functions $\sigma = (\sigma_{\mu}: \mu \in [\dimX])$ with $\sigma_{\mu} \in C^{N-1}(\R^{\dimY}, \R^{\dimY})$,  the new tree path
\begin{equation}\label{eq:integral as a path}
\Z_{\bullet}
=
\uint_{0}^{\bullet}
\sigma(\Y)_{r} \mathrm{d} \X_{r} 
=
\sum_{\mu \in [\dimX]}
\Big(
\int^{\bullet}_{0}
\sigma_{\mu}(\Y)_{r} \mathrm{d}\X^{(\mu)}_{r} 
\Big) \ind
+
\sum_{f \in\forest{N-2}}
\scal{f}{\sigma_{\mu}(\Y)_{\bullet}} 
[f]_{\mu}\;.
\end{equation}
Note that we  typographically distinguish the integral $\int$ appearing in \eqref{eq:integral as a value} from the integral $\displaystyle \uint$ appearing in \eqref{eq:integral as a path} since they produce different sorts of objects. 

We can now state a precisely what we mean by a ``solution'' to \eqref{RDE3}.

\begin{definition}\label{def:solution}
Given a branched rough path $\X$ we say that an $\X$-controlled tree path $\Y$ is a solution to \eqref{RDE3} if we have, for $0  \le s \le 1$, 
\begin{equation}\label{eq:solution}
\Y_{s}
=
y_0 \ind 
-
\Big(\int_{0}^{s} |Y_{r}|^{m -1} Y_{r} \mathrm{d}r\Big)
\ind
+
\Z_{s}\;.
\end{equation}
where $\Z_{s}$ is defined as in \eqref{eq:integral as a path}. 
\end{definition}

\subsection{Coherence}
A necessary (but not sufficient) condition for the tree path $\Y$ to be a solution in the sense of Definition~\ref{def:solution} is that:
\begin{equation}\label{eq:lhs_to_rhs}
\forall [f]_{\mu} \in \rtree{ N-1}, \ s \in (0,T],\ \textnormal{ one has }
\scal{f}{\sigma_{\mu}(\Y_{s})}
=
\scal{[f]_{\mu}}{\Y_{s}}\;.
\end{equation}
The condition \eqref{eq:lhs_to_rhs}  is algebraic and completely local in time.  
In order to be able to conclude that $\Y$ is a solution in the sense of Definition~\ref{def:solution} one would additionally have to impose that $\Y$ is $\mathbf{X}$-controlled and that, for all $s \in (0,T]$, the $\ind$ coefficient on both sides of \eqref{eq:solution} are the same. 

Given $Y_{s}$ as input, one can use \eqref{eq:lhs_to_rhs} to calculate either side of \eqref{eq:lhs_to_rhs} inductively in $|f|$.
This motivates the following definition.
\begin{definition}\label{def:coherence_coeff}
We define the collection of functions
\[
(\Upsilon_{\mu}[f](\cdot): f \in\forest{N-1} 
\;,\;
\mu \in [\dimX])\;,\;
\Upsilon_{\mu}[f](\cdot):\R^{\dimY} \rightarrow \R^{\dimY}\;,
\]
inductively, as follows. 
Given 
\[
f = \prod_{i \in I} [f_{i}]_{\mu_{i}}
\] 
we set 
\begin{equation}\label{eq:upsilon_def}
\Upsilon_{\mu}[f](\cdot)
=
D^{|I|}
\sigma_{\mu} (\cdot) \big[ \big( \Upsilon_{\mu_{i}}[f_{i}](\cdot)\big)_{ i \in I } \big] \;.
\end{equation} 
The definition given above is inductive in $|f|$ and also fixes the base case $f = \ind$ where we have $\Upsilon_{\mu}[\ind](\cdot) = \sigma_{\mu}(\cdot)$. 
\end{definition}
\begin{remark}
The functions $\Upsilon_{\mu}[f]$ are also known as \emph{elementary differentials} in the analysis of Runge-Kutta methods, see \cite{Brouder}. 
\end{remark}

These elementary differentials usually appear with symmetry factors. 
We define $S: \CF_{\all} \rightarrow \mathbb{N}$ via recursively setting, $S(\ind) = 1$, and 
 \[
S(f) = \prod_{h \in \rCT_{\all}} (f_{h}!) S(h)^{f_{h}}\;,
\]
where we recall that $f_{h} \in \mathbb{N}$ is the number of instances of $h$ in $f$. 

A straightforward inductive proof then gives the following lemma. 

\begin{lemme} Given a tree path $\Y$, the condition \eqref{eq:lhs_to_rhs} is equivalent to 
\begin{equation}\label{eq:coherence_prop}
\forall [f]_{\mu} \in \mathring{\CT}_{\le N-1}, \ s \in (0,T],\ \textnormal{ one has }
\scal{f}{\sigma_{\mu}(\Y_{s})}
=
\scal{[f]_{\mu}}{\Y_{s}} = \frac{1}{S(f)}\Upsilon_{\mu}[f](Y_{s})\;.
\end{equation}
\end{lemme}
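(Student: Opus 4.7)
The plan is to prove the equivalence in two directions. The implication $\eqref{eq:coherence_prop} \Rightarrow \eqref{eq:lhs_to_rhs}$ is immediate since the first equality of \eqref{eq:coherence_prop} is exactly \eqref{eq:lhs_to_rhs}. The real content lies in the reverse implication, which reduces to establishing
\[
\scal{f}{\sigma_\mu(\Y_s)} = \frac{1}{S(f)}\Upsilon_\mu[f](Y_s)
\]
for every $f$ with $[f]_\mu \in \rtree{N-1}$, which I would prove by induction on $|f|$.

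For the base case $f = \ind$, the definition \eqref{eq:non-linearity_of_tree_path} with $g = \sigma_\mu$ gives $\scal{\ind}{\sigma_\mu(\Y)_s} = \sigma_\mu(Y_s) = \Upsilon_\mu[\ind](Y_s)$, and $S(\ind) = 1$, so the identity holds trivially.

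For the inductive step, write $f = \prod_{i \in I}[f_i]_{\mu_i}$ and read off the coefficient of $f$ in the expansion \eqref{eq:non-linearity_of_tree_path} applied to $g = \sigma_\mu$. Only the term with $p = |I|$ can contribute a forest of the correct size; expanding $(\delta\Y_s)^{\otimes |I|}$ yields a sum over tuples of trees, and by symmetry of $D^{|I|}\sigma_\mu$, all of the $|I|!/\prod_j n_j!$ orderings that multiply to $f$ (where $n_j$ are the multiplicities of the distinct trees appearing in $f$) give identical contributions. Combined with the Taylor prefactor $1/|I|!$ this produces
\[
\scal{f}{\sigma_\mu(\Y_s)} = \frac{1}{\prod_j n_j!} D^{|I|}\sigma_\mu(Y_s)\big[\big(\scal{\Y_s}{[f_i]_{\mu_i}}\big)_{i \in I}\big].
\]
Next, I would apply \eqref{eq:lhs_to_rhs} to each entry to rewrite $\scal{\Y_s}{[f_i]_{\mu_i}} = \scal{f_i}{\sigma_{\mu_i}(\Y_s)}$, and then invoke the inductive hypothesis (valid since $|f_i| = |[f_i]_{\mu_i}| - 1 < |f|$) to replace each such term by $\frac{1}{S(f_i)}\Upsilon_{\mu_i}[f_i](Y_s)$. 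Pulling these scalars out by multilinearity leaves
\[
\scal{f}{\sigma_\mu(\Y_s)} = \frac{1}{\prod_j n_j! \prod_{i \in I} S(f_i)} D^{|I|}\sigma_\mu(Y_s)\big[\big(\Upsilon_{\mu_i}[f_i](Y_s)\big)_{i \in I}\big],
\]
and by the definition \eqref{eq:upsilon_def} of $\Upsilon_\mu[f]$ together with the identity $S(f) = \prod_j n_j! \prod_{i \in I} S(f_i)$ coming directly from the recursive definition of $S$, this equals $\frac{1}{S(f)}\Upsilon_\mu[f](Y_s)$, closing the induction.

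The main obstacle is the combinatorial bookkeeping: one must match the multinomial factor $|I|!/\prod_j n_j!$ arising from the expansion of $(\delta \Y_s)^{\otimes |I|}$ against the Taylor prefactor $1/|I|!$, and then recognise that the leftover factor $\prod_j n_j! \prod_{i \in I} S(f_i)$ is precisely $S(f)$. Beyond this accounting every step is either Taylor expansion, multilinearity of $D^{|I|}\sigma_\mu$, or an appeal to the inductive hypothesis.
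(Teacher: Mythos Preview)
Your proof is correct and is precisely the ``straightforward inductive proof'' the paper alludes to without spelling out. One small terminological slip: when you say ``only the term with $p=|I|$ can contribute a forest of the correct size,'' the relevant invariant is the number of trees $\#f$, not the order $|f|$; since each $h_i\in\rtree{N-1}$ contributes exactly one tree to the product, $\prod_{i=1}^p h_i=f$ forces $p=\#f=|I|$. The rest of your bookkeeping (the multinomial count, the identity $S(f)=\prod_j n_j!\prod_{i\in I}S(f_i)$ using $S([f_i]_{\mu_i})=S(f_i)$, and the appeal to \eqref{eq:lhs_to_rhs} plus the inductive hypothesis) is exactly right.
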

\begin{definition}
We call a tree path $\Y$ that satisfies \eqref{eq:lhs_to_rhs} (or equivalently \eqref{eq:coherence_prop}) \emph{coherent}. 
\end{definition}

\begin{remark}
We extend $\Upsilon_{\mu}[\bullet]$ from $\CF_{\all}$ to $\Span(\CF_{\all})$ by linearity. 
We also define $\Upsilon[ \bullet]$ on $\Span(\mathring{\CT}_{\all})$ by setting, for $h = [f]_{\mu} \in \mathring{\CT}_{\all}$, $\Upsilon[h] = \Upsilon_{\mu}[f]$ and then again extend linearly. 
We can then write \eqref{eq:upsilon_def} as
\begin{equation}\label{eq:upsilon_def2}
\Upsilon \big[[f]_{\mu} \big](\cdot) = \Upsilon_{\mu}[f](\cdot)
=
D^{\num f}
\sigma_{\mu} (\cdot) \big[ \big( \Upsilon[h](\cdot)\big)_{h  \in f } \big]\;.
\end{equation}
This duplication of notation (giving equivalent definitions of $\Upsilon_{\mu}$ on forests and $\Upsilon$ on trees) is 
convenient because trees appear for the expansion of $Y$ while forests appear for the expansion of the right hand side of the equation. 
\end{remark}

As an example, we compute 
\[
\Upsilon_{l}\Big[ 
\bullet_{k} \<leg_black> 
 \Big](x)
  = 
\Upsilon\Big[ \<bigleg_black> \Big](x)
=
D^2 \sigma_{l}(x)\big[ \big( \sigma_{k}(x)  , D \sigma_{i}(x)[ \sigma_{k}(x) ] \big) \big]\;.
\]

We close this section with the following remark. 
\begin{remark}\label{rem:drift}
Note that the drift term $- |Y(t)|^{m-1} Y(t)dt$ didn't play much of a role in this section, in particular it has no direct effect on the coherence coefficients $\Upsilon_{\mu}[\cdot](\cdot)$. 
However, since$ \big| \int_{s}^{t} |Y(r)|^{m-1} Y(r)dr \big| \lesssim |s-t|$, this term is negligible compared to the other terms in our local expansion for $Y$. 
In particular, by virtue of \eqref{eq:solution}, we have  % 
\begin{equation}\label{eq:remainder_relation}
R^{\Z,\ind}_{s,t} = 
 Z_t - Z_s
 - \scal{\X_{s,t}}{\delta\Z_{s}}
 = R_{s,t}^{\Y,\ind} + \int_s^t |Y(r)|^{m-1} Y(r) dr \;,
 \end{equation}
 where  $\delta \Z$ is defined via the same convention as in \eqref{decomposition} and we used that, by coherence,  
 \begin{equation*}
\scal{\X_{s,t}}{\delta\Z_{s}}= 
\sum_{h \in \rtree{N-1}} 
 \scal{\X_{s,t}}{h}\scal{h}{\Z_{s}} 
 \;
 =
 \sum_{h \in \rtree{N-1}} 
 \scal{\X_{s,t}}{h}\scal{h}{\Y_{s}}
\;.
\end{equation*} 
The role of the drift becomes important only in the final step of our argument in Section~\ref{sec:final}.
\end{remark}

\section{Grafting, coherence, and remainder estimates}
\label{s:coherence_and_remainder}
\subsection{Grafting operations}\label{subsec:grafting_op}
We introduce grafting operations which will later be shown to be ``adjoints'' of the co-product under a particular inner product we introduce later. 

In words, given $\tilde{f}, f \in \CF_{\all}$, the grafting of $\tilde{f}$ onto $f$ sums over all the ways of attaching each of the trees in $\tilde{h} \in \tilde{f}$ to a vertex of $f$ with an edge, or concatenating $\tilde{h}$ to the forest $f$. 
More precisely, given 
\begin{equation}\label{eq: productform}
\tilde{f} = \prod_{j \in J} \tilde{h}_{j} \in \CF_{\all} 
\textnormal{ and }
f = \prod_{i \in I} [f_{i}]_{\mu_i} \;.
\end{equation}
we inductively define
\begin{equation}\label{eq:inductive_def_grafting}
\tilde{f} \graft f
=
\sum_{K \subset J}
\Big(
\prod_{j \in J \setminus K}
\tilde{h}_{j}
\Big)
\sum_{ \theta: K \rightarrow I}
\Big(
\prod_{i \in I}
\Big[
\Big(
\prod_{k \in \theta^{-1}(i)}
\tilde{h}_{k}
\Big)
\graft f_{i}
\Big]_{\mu_i}
\Big) \in \CF_{\all}\;.
\end{equation}
The definition above is inductive in $|f|$, with the base case corresponding to $f = \ind$ for which we have $I = \emptyset$ in \eqref{eq: productform} and so one must choose $K = \emptyset$ in the first sum of \eqref{eq:inductive_def_grafting} -- in particular we have $\tilde{f} \graft \ind = \tilde{f}$. 
Additionally, we note that $\scal{\tilde{f} \graft f}{\bar{f}} \not = 0 \Rightarrow |\tilde{f}| + |f| = |\bar{f}|$. 
We give an example grafting computation below:
\begin{equation*}
\begin{split}
\<square_black> \bullet_{k} 
\graft \bullet_{l} \<leg_black>
&=
\<square_black> \bullet_{k}  
 \bullet_{l} \<leg_black>
 +
\<square_black> \ 
\big(
\<leg2_black> \<leg_black> 
+
 \bullet_{l} 
 (
 \<square2_black>
 +
 \<longleg_black>)
\big) \\
{}& \quad +
 \bullet_{k}  
 \big(
\<tallsquare_black>
\<leg_black>
+
 \bullet_{l} 
(
\<squareleg1_black>
+
\<squareleg2_black>
)
\big)\\
{}& \quad +
\<tallsquare_black>
\big(
 \<square2_black>
 +
 \<longleg_black>
\big)
+
\<leg2_black>
\big(
\<squareleg1_black>
+
\<squareleg2_black>
\big)\;.
\end{split}
\end{equation*}

\begin{remark} The operation $\tilde{f} \graft f$ is a decorated version of the ``grafting forests over forests'' operation of \cite[Section~1.5]{foissy}, this is essentially a version of the Grossman-Larson product \cite{GL-Hopf}.   
\end{remark}

\subsection{Inner products and the adjoint relation}\label{subsec:inner_product}
\begin{definition}\label{def:inner_product_two}
We define a new inner product on $\Span(\CF_{\all})$ by inductively setting
\begin{itemize}
\item $ \bscal{\ind}{ \ind} = 1$
\item $\bscal{ [f]_{\mu}} {[g]_{\mu'}} = 1\{ \mu = \mu' \} \bscal{f}{g}$
\item For $h_{i}, g_{j} \in \mathring{\CT}_{\all}$, 
\[
\bscal{ \prod_{i \in I} h_{i}}{\prod_{j \in J} g_{j}} = \sum_{\theta: I \rightarrow J \atop \mathrm{biject.}}
\prod_{i \in I} \bscal{h_{i}}{g_{\theta(i)}}\;,
\]
\end{itemize}
We extend this inner product to $\Span(\CF_{\all}) \otimes \Span(\CF_{\all})$ by setting, for $f,g,\bar{f},\bar{g} \in \CF_{\all}$, 
\[ 
\bscal{ f \otimes g}{ \bar{f} \otimes \bar{g}} 
= 
\bscal{f}{ \bar{f}}
\bscal{g}{ \bar{g}}\;.
\]
\end{definition}
\begin{remark}
Note that, for $f,g \in \CF_{\all}$, 
\begin{equation}\label{eq:inner_product_relation}
\bscal{f}{g} 
=
1\{f = g\} S(f) = S(f) \scal{f}{g}\;.
\end{equation}
In particular, \eqref{eq:coherence_prop} can be rewritten as :
\begin{equation*}
\forall [f]_{\mu} \in \mathring{\CT}_{\le N-1}, \ s \in (0,T],\ \textnormal{ one has }
\bscal{f}{\sigma_{\mu}(\Y_{s})}
=
\bscal{[f]_{\mu}}{\Y_{s}} = \Upsilon_{\mu}[f](Y_{s})\;.
\end{equation*}
\end{remark}

The following lemma is straightforward to prove from our definition of $\bscal{\bullet}{\bullet}$. 
\begin{lemme}\label{lemma_tensor_identity}
Given any $\tilde{h}_{j},  h_{i} \in \mathring{\CT}$ and also $a_{n} \in \Span(\CF_{\all}) \otimes \Span(\CF_{\all})$ one has
\[
\Bscal {\big( \prod_{j \in J} \tilde{h}_{j}  \big) \otimes  \big( \prod_{i \in I} h_{i} \big)}{ 
 \prod_{n \in N} a_{n}}
=
\sum_{
\substack{\theta_J: J \rightarrow N \\
\theta_I: I \rightarrow N}}
\prod_{n \in N}
\Bscal{ 
\big( \prod_{ j \in \theta_{J}^{-1}(n)} \tilde{h}_{j} \big)
\otimes
\big( \prod_{ i \in \theta_{I}^{-1}(n)} h_{i} \big)
}{
a_{n}
}
\]
\end{lemme}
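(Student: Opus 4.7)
The plan is to reduce the identity to a combinatorial statement about factoring bijections between finite sets, exploiting multilinearity of both sides in each $a_n$ and the tensor-factorization rule built into Definition~\ref{def:inner_product_two}. By linearity in each $a_n$, I may assume each $a_n$ is a pure tensor $a_n = \bar{f}_n \otimes \bar{g}_n$ with $\bar{f}_n, \bar{g}_n \in \CF_{\all}$. Since the product on $\Span(\CF_{\all}) \otimes \Span(\CF_{\all})$ is componentwise, $\prod_{n \in N} a_n = \bigl(\prod_{n} \bar{f}_n \bigr) \otimes \bigl( \prod_{n} \bar{g}_n \bigr)$, and the tensor-factorization rule collapses the LHS into
\[
\bscal{\prod_{j \in J} \tilde{h}_j}{\prod_{n \in N} \bar{f}_n} \cdot \bscal{\prod_{i \in I} h_i}{\prod_{n \in N} \bar{g}_n}.
\]

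The core step is then the following \emph{partition identity}: for any forests $\bar{f}_n \in \CF_{\all}$ and trees $\tilde{h}_j \in \rCT_{\all}$,
\[
\bscal{\prod_{j \in J} \tilde{h}_j}{\prod_{n \in N} \bar{f}_n} = \sum_{\theta: J \to N} \prod_{n \in N} \bscal{\prod_{j \in \theta^{-1}(n)} \tilde{h}_j}{\bar{f}_n}.
\]
To prove this I expand each $\bar{f}_n = \prod_{k \in K_n} \bar{h}_{n,k}$ as a product of trees, so that by the definition of $\bscal{\cdot}{\cdot}$ the LHS is a sum over bijections $\phi: J \to \bigsqcup_{n \in N} K_n$ of $\prod_{j \in J} \bscal{\tilde{h}_j}{\bar{h}_{\phi(j)}}$. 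Any such $\phi$ is determined uniquely by the composition $\theta := \pi \circ \phi: J \to N$ with the projection $\pi: \bigsqcup_n K_n \to N$, together with, for each $n \in N$, a bijection $\phi_n: \theta^{-1}(n) \to K_n$; this refinement requires $|\theta^{-1}(n)| = |K_n|$, but when this cardinality condition fails the corresponding term on the right-hand side already vanishes, since $\bscal{\cdot}{\cdot}$ is zero between forests with different numbers of trees. Grouping the sum as ``first choose $\theta$, then choose $(\phi_n)_n$'' and identifying each inner sum as a forest inner product then yields the partition identity.

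Finally, applying the partition identity twice -- once to the system $(J, \tilde{h}_j, \bar{f}_n)$ and once to $(I, h_i, \bar{g}_n)$ -- gives a double sum over $(\theta_J, \theta_I)$ whose $n$-th factor is $\bscal{\prod_{j \in \theta_J^{-1}(n)} \tilde{h}_j}{\bar{f}_n} \cdot \bscal{\prod_{i \in \theta_I^{-1}(n)} h_i}{\bar{g}_n}$; reassembling each such factor via the tensor-factorization rule in reverse, i.e.\ recognizing it as $\Bscal{\bigl(\prod_{j \in \theta_J^{-1}(n)} \tilde{h}_j\bigr) \otimes \bigl(\prod_{i \in \theta_I^{-1}(n)} h_i\bigr)}{a_n}$, produces the right-hand side of the lemma. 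Linearity in $a_n$ then removes the pure-tensor assumption. The only real obstacle is the bookkeeping in the bijection factorization; once that is set up carefully, everything else is pure multilinearity combined with the definitions.
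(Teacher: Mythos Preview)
Your proof is correct and is precisely the natural argument from Definition~\ref{def:inner_product_two}; the paper itself does not spell out a proof, merely remarking that the lemma ``is straightforward to prove from our definition of $\bscal{\bullet}{\bullet}$.'' Your reduction to pure tensors and the bijection-factorisation (partition identity) is exactly the kind of direct verification the authors have in mind.
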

Our main purpose for introducing the inner products $\bscal{\bullet}{\bullet}$ is that it is convenient for stating and proving following theorem. 
\begin{theorem}\label{thm:graft_adjoint}
For any $\tilde{f},f, g \in \CF_{\all}$, we have
\begin{equation}\label{eq:graft_adjoint}
\bscal{ \tilde{f} \graft f}{ g } = \bscal{\tilde{f} \otimes f}{ \Delta g}\;.
\end{equation}
\end{theorem}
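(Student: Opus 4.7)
The plan is to proceed by induction on $|g|$. The base case $g = \ind$ is immediate, since $\Delta \ind = \ind \otimes \ind$ and the relation $|\tilde f \graft f| = |\tilde f| + |f|$ forces $\tilde f = f = \ind$ for the left-hand side to be nonzero. For the inductive step I split according to the multiplicative structure of $g$.

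For \emph{Case A}, when $g = [g']_\nu$ is a single tree, the recursive definition gives $\Delta g = g \otimes \ind + (\id \otimes [\bullet]_\nu) \Delta g'$, so pairing with $\tilde f \otimes f$ produces $\bscal{\tilde f}{g}\cdot 1\{f = \ind\}$ from the first term and, from the second, a contribution that vanishes unless $f = [f_1]_\nu$, in which case it reduces to $\bscal{\tilde f \otimes f_1}{\Delta g'}$; since $|g'| < |g|$, the inductive hypothesis rewrites this as $\bscal{\tilde f \graft f_1}{g'}$. On the left-hand side, inspection of \eqref{eq:inductive_def_grafting} shows that single-tree components of $\tilde f \graft f$ arise in exactly two scenarios: $f = \ind$ (yielding $\tilde f \graft f = \tilde f$, which pairs nontrivially with $g$ only if $\tilde f$ itself equals $g$), and $f = [f_1]_\nu$ with $K = J$ (yielding the single tree $[\tilde f \graft f_1]_\nu$). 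The two sides match term by term.

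For \emph{Case B}, when $g = \prod_{n \in N} g_n$ with $|N| \geq 2$, one uses the multiplicativity $\Delta g = \prod_n \Delta g_n$ together with Lemma~\ref{lemma_tensor_identity} to obtain
\begin{equation*}
\bscal{\tilde f \otimes f}{\Delta g} = \sum_{\theta_J: J \to N,\ \theta_I: I \to N} \prod_{n \in N} \bscal{\tilde f^{(n)} \otimes f^{(n)}}{\Delta g_n},
\end{equation*}
where $\tilde f^{(n)} = \prod_{j \in \theta_J^{-1}(n)} \tilde h_j$ and $f^{(n)} = \prod_{i \in \theta_I^{-1}(n)} [f_i]_{\mu_i}$. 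Since each $|g_n| < |g|$, the inductive hypothesis converts each factor into $\bscal{\tilde f^{(n)} \graft f^{(n)}}{g_n}$. To close the argument one needs the analogous factorization
\begin{equation*}
\bscal{\tilde f \graft f}{g} = \sum_{\theta_J,\, \theta_I} \prod_{n \in N} \bscal{\tilde f^{(n)} \graft f^{(n)}}{g_n}.
\end{equation*}
This is a purely combinatorial identity: a summand of $\tilde f \graft f$ indexed by $(K, \theta)$ is a forest with tree-slots indexed by $(J \setminus K) \sqcup I$, and the bijective description of $\bscal{\cdot}{\cdot}$ selects a matching $\Psi: (J \setminus K) \sqcup I \to N$ of these slots with the $g_n$. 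The data $(K, \theta, \Psi)$ is in natural bijection with the collection $(\theta_J, \theta_I)$ together with, for each $n \in N$, a subset $K_n \subseteq \theta_J^{-1}(n)$ and a map $K_n \to \theta_I^{-1}(n)$, which is exactly the data indexing the product $\prod_n (\tilde f^{(n)} \graft f^{(n)})$.

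I expect the main obstacle to be this combinatorial bookkeeping in Case B: one must verify the bijection of indexing carefully and track that the symmetry factors implicit in $\bscal{\cdot}{\cdot}$ (cf.\ \eqref{eq:inner_product_relation}) distribute correctly across the product over $n$. Once this factorization is in hand, applying the inductive hypothesis to each $g_n$ closes the induction immediately.
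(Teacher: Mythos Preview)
Your argument is correct, but it proceeds by a genuinely different route than the paper's.

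The paper inducts on $|f|$ (the second argument of $\graft$). The base case $f=\ind$ is the counit identity $\bscal{\tilde f\otimes\ind}{\Delta g}=\bscal{\tilde f}{g}$, and for the inductive step the authors expand both sides directly: the left side via the recursive definition \eqref{eq:inductive_def_grafting} of $\graft$, the right side via the recursive formula for $\Delta g$ together with Lemma~\ref{lemma_tensor_identity}. After a single combinatorial reorganisation of the resulting sums (identifying $\sigma$ with the pair $(\pi,\theta_I)$) the two expressions match, the induction hypothesis being invoked on each $f_i$ with $|f_i|<|f|$.

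You instead induct on $|g|$, splitting into the tree case (where the recursion for $\Delta$ is used) and the forest case (where multiplicativity of $\Delta$ and Lemma~\ref{lemma_tensor_identity} give the factorisation of the right side). The extra content in your approach is the factorisation identity
\[
\bscal{\tilde f\graft f}{\textstyle\prod_n g_n}=\sum_{\theta_J,\theta_I}\prod_n\bscal{\tilde f^{(n)}\graft f^{(n)}}{g_n},
\]
which you correctly identify as the crux. Your description of the bijection between indexing data is slightly loose: the full set of tuples $(\theta_J,\theta_I,(K_n,\theta_n)_n)$ is strictly larger than the set of $(K,\theta,\Psi)$ with $\Psi$ bijective, so the bijection only holds between the \emph{non-vanishing} contributions on each side. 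Once one restricts to those (forcing $\theta_I$ injective, $K_n=\theta_J^{-1}(n)$ for $n\in\mathrm{Im}(\theta_I)$, and $|\theta_J^{-1}(n)|=1$, $K_n=\emptyset$ otherwise), the correspondence $K=\theta_J^{-1}(\mathrm{Im}(\theta_I))$, $\theta=\theta_I^{-1}\circ\theta_J|_K$, $\Psi=(\theta_J|_{J\setminus K})\sqcup\theta_I$ is indeed a bijection, and the values agree.

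In short: both proofs reduce to the same kind of combinatorial matching; the paper packages it as one computation after unfolding $\graft$, whereas you separate the recursive (tree) and multiplicative (forest) aspects of $g$. Your organisation arguably makes the Hopf-algebraic content more visible, at the cost of having to establish the factorisation identity above as a separate step.
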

\begin{proof}
We give an inductive proof with the induction being in $|f|$. For the base case we have $f = \ind$ in which case \eqref{eq:graft_adjoint} is immediate.  
We now prove the inductive step. 
We write $g = \prod_{n \in N} [g_{n}]_{\bar{\mu}_{n}}$ and also write $\tilde{f}$ and $f$ as in \eqref{eq: productform}. We then have
\begin{equation*}
\begin{split}
\bscal{ \tilde{f} \graft f}{ g}
=& 
\sum_{K \subset J}
\sum_{ \theta: K \rightarrow I}
\Bscal{
\Big(
\prod_{j \in J \setminus K}
\tilde{h}_{j}
\Big)
\prod_{i \in I}
\Big[
\Big(
\prod_{k \in \theta^{-1}(i)}
\tilde{h}_{k}
\Big)
\graft f_{i}
\Big]_{\mu_i}}
{
\prod_{n \in N} [g_{n}]_{\bar{\mu}_{n}}
}\\
=&
\sum_{K \subset J}
\sum_{\sigma: (J \setminus K) \sqcup I \rightarrow N
\atop
\textnormal{biject.}}
\sum_{ \theta: K \rightarrow I}
\Big(
\prod_{j \in J \setminus K}
\Bscal{ \tilde{h}_{j}}
{ [g_{\sigma(j)}]_{\bar{\mu}_{\sigma(j)}}}
\Big)\\
{}&\quad
\times
\Big(
\prod_{i \in I}
\Bscal{
\Big[ 
\Big(
\prod_{k \in \theta^{-1}(i)}
\tilde{h}_{k}
\Big)
\graft f_{i}
\Big]_{\mu_i}}
{ 
[g_{\sigma(i)}]_{\bar{\mu}_{\sigma(i)}}
}
\Big)\\
=&
\sum_{K \subset J}
\sum_{\sigma: (J \setminus K) \sqcup I \rightarrow N
\atop
\textnormal{biject.}}
\sum_{ \theta: K \rightarrow I}
\Big(
\prod_{j \in J \setminus K}
\Bscal{ \tilde{h}_{j}}
{ [g_{\sigma(j)}]_{\bar{\mu}_{\sigma(j)}}}
\Big)\\
{}&
\quad \times
\Big(
\prod_{i \in I}
1\{
\mu_i = \bar{\mu}_{\sigma(j)}\}
\Bscal{
\Big(
\prod_{k \in \theta^{-1}(i)}
\tilde{h}_{k}
\Big)
\otimes  f_{i}
}
{\Delta g_{\sigma(i)}
}
\Big)\;.
\end{split}
\end{equation*}
Above, in the second equality we used the third identity of Definition~\ref{def:inner_product_two}, and in the third equality we used our the third identity of Definition~\ref{def:inner_product_two} followed by our induction hypothesis. 

One the other hand, $\Delta g
=
\sum_{M \subset N}
\big ( \prod_{m \in M} [g_{m}]_{\bar{\mu}_{m}} \otimes \ind \Big) 
\Big( \prod_{n \in N \setminus M}  (\id \otimes [ \bullet ]_{\bar{\mu}_{n}}) \Delta g_{n} \Big)$ 
which gives
\begin{equation*}
\begin{split}
{}&\bscal{\tilde{f} \otimes f}{\Delta g}\\
=& 
\sum_{M \subset N}
\Bscal{
\tilde{f} \otimes f}{
\Big( \prod_{m \in M} [g_{m}]_{\bar{\mu}_{m}} \otimes \ind \Big) 
\Big( \prod_{n \in N \setminus M}  (\id \otimes [ \cdot]_{\bar{\mu}_{n}}) \Delta g_{n} \Big)
}\\
=& 
\sum_{M \subset N}
\sum_{\theta_{J}: J \rightarrow N}
\sum_{\theta_{I}: I \rightarrow N }
\Big(
\prod_{m \in M}
\Bscal{
\Big(
\prod_{j \in \theta_{J}^{-1}(m)}
\tilde{h}_{j}
\Big)
\otimes
\Big(
\prod_{i \in \theta_{I}^{-1}(m)}
[f_{i}]_{\mu_i}
\Big)}
{
[g_{m}]_{\bar{\mu}_{m}} \otimes \ind
}
\Big)\\
{}&\quad \times
\Big(
\prod_{n \in N \setminus M}
\Bscal{
\Big(
\prod_{j \in \theta_{J}^{-1}(n)}
\tilde{h}_{j}
\Big)
\otimes
\Big(
\prod_{i \in \theta_{I}^{-1}(n)}
[f_{i}]_{\mu_i}
\Big)}
{
  (\id \otimes [ \cdot]_{\bar{\mu}_{n}}) \Delta g_{n}}
\Big)\\
=& 
\sum_{M \subset N}
\sum_{\theta_{J}: J \rightarrow N}
\sum_{
\substack{
\theta_{I}: I \rightarrow N \setminus M \\  \textnormal{biject.}}}
\Big(
\prod_{m \in M}
\Bscal{
\Big(
\prod_{j \in \theta_{J}^{-1}(m)}
\tilde{h}_{j}
\Big)}
{
[g_{m}]_{\bar{\mu}_{m}} 
}
\Big)\\
{}&\quad \times
\Big(
\prod_{i  \in I}
1 \{ \mu_{i} = \bar{\mu}_{\theta_{I}(i)}\}
\Bscal{
\Big(
\prod_{j \in \theta_{J}^{-1}(\theta_{I}(i))}
\tilde{h}_{j}
\Big)
\otimes
f_{i}
}
{
 \Delta g_{\theta_{I}(i)} \big \rangle
}\\
=&
\sum_{M \subset N}
\sum_{K \subset J}
\sum_{
\substack{
\pi : J \setminus K \rightarrow M \\  \textnormal{biject.}}}
\sum_{
\substack{
\theta_{I}: I \rightarrow N \setminus M \\  \textnormal{biject.}}}
\sum_{\theta: K \rightarrow I}
\Big(
\prod_{j \in J \setminus K}
\big \langle
\tilde{h}_{j},
[g_{\pi(m)}]_{\bar{\mu}_{\pi(m)}} 
 \big \rangle
\Big)\\
{}&\quad \times
\Big(
\prod_{i  \in I}
1\{\mu_{i} = \bar{\mu}_{\theta_{I}(i)}\}
\Bscal{
\Big(
\prod_{j \in \theta^{-1}(i)}
\tilde{h}_{j}
\Big)
\otimes
f_{i}
}
{
 \Delta g_{\theta_{I}(i)} 
 }
\Big)\\
=&
\sum_{K \subset J}
\sum_{\sigma: (J \setminus K) \sqcup I \rightarrow N
\atop
\textnormal{biject.}}
\sum_{ \theta: K \rightarrow I}
\Big(
\prod_{j \in J \setminus K}
\bscal{ 
\tilde{h}_{j}}
{ [g_{\sigma(j)}]_{\bar{\mu}_{\sigma(j)}}}
\Big)\\
{}&
\quad \times
\Big(
\prod_{i \in I}
1\{
\mu_i = \bar{\mu}_{\sigma(j)}\}
\Bscal{
\Big(
\prod_{k \in \theta^{-1}(i)}
\tilde{h}_{k}
\Big)
\otimes  f_{i}
}{ \Delta g_{\sigma(i)}
}
\Big)\;.
\end{split}
\end{equation*}
In the second equality above we used Lemma~\ref{lemma_tensor_identity}. In the third equality we above we used the fact that the line before vanishes unless $\theta_{I}$ is a bijection from $I$ to $N \setminus M$. 

In the fourth equality we used that the line before vanishes unless $|\theta_{J}^{-1}(m)| = 1$ for every $m \in M$. 
Therefore the sum over the map $\theta_{J}$ in the line before can be written as a sum over $K = \theta_{J}^{-1}(N \setminus M)$, $\pi = \theta_{J}\restriction_{J \setminus K}$ and $\theta = (\theta_{J} \restriction_{K}) \circ \theta_{I}^{-1}$.
In the last equality we exchange the sums over $\pi$ and $\theta_{I}$, and $M$ for just a single sum over $\sigma$.  
\end{proof}
\begin{remark}
The result of Theorem~\ref{thm:graft_adjoint} was actually first obtained in \cite[Proposition 4.4]{hoffman}, and reflects the duality between the Grossman-Larson and Connes-Kreimer Hopf algebras.  
\end{remark}

\subsection{Coherence and grafting}\label{subsec:coherence}

The next lemma states a generalisation of \eqref{eq:upsilon_def2} and shows that maps $\Upsilon_{\mu}$ interact well with our grafting procedure.  
\begin{lemme}\label{lemma:grafting_upsilon}
Given any $\tilde{f}, f \in \CF$ with $|\tilde{f}| + |f| \le N-1$, and $\mu \in [\dimX]$ one has  
\[
\Upsilon_{\mu} [\tilde{f} \graft f]
= D^{\num \tilde{f}} \Upsilon_{\mu}[f] 
\big[ 
\big(
\Upsilon[\tilde{h}] 
\big)_{\tilde{h} \in \tilde{f}}
\big]\;.
\]
\end{lemme}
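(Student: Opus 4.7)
The plan is to prove this by induction on $|f|$, with the key computation being a Faà di Bruno-style expansion of $D^{\#\tilde{f}} \Upsilon_{\mu}[f]$ that matches term by term with the grafting formula \eqref{eq:inductive_def_grafting}.

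For the base case $f = \ind$ we have $\tilde{f} \graft \ind = \tilde{f}$, and since $\Upsilon_{\mu}[\ind] = \sigma_{\mu}$, both sides unfold to $D^{\#\tilde{f}}\sigma_{\mu}\bigl[(\Upsilon[\tilde{h}])_{\tilde{h} \in \tilde{f}}\bigr]$ directly from \eqref{eq:upsilon_def2}. For the inductive step, write $\tilde{f} = \prod_{j\in J} \tilde{h}_{j}$ and $f = \prod_{i\in I}[f_{i}]_{\mu_{i}}$. Applying $\Upsilon_{\mu}[\cdot]$ to the grafting formula \eqref{eq:inductive_def_grafting} term by term, each summand is a forest whose top-level trees are the "unattached" $\tilde{h}_{j}$ for $j \in J\setminus K$ together with the $|I|$ trees $\bigl[\bigl(\prod_{k\in \theta^{-1}(i)}\tilde{h}_{k}\bigr) \graft f_{i}\bigr]_{\mu_{i}}$. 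Using \eqref{eq:upsilon_def2} and then the induction hypothesis on each $\bigl(\prod_{k\in \theta^{-1}(i)}\tilde{h}_{k}\bigr) \graft f_{i}$ (which has strictly smaller $|f_{i}| < |f|$), this produces
\[
\Upsilon_{\mu}[\tilde{f} \graft f]
= \sum_{K \subset J}\sum_{\theta: K \to I}
D^{|I| + |J\setminus K|}\sigma_{\mu}\bigl[\,(\Upsilon[\tilde{h}_{j}])_{j \in J\setminus K},\,\bigl(D^{|\theta^{-1}(i)|}\Upsilon_{\mu_{i}}[f_{i}]\bigl[(\Upsilon[\tilde{h}_{k}])_{k\in \theta^{-1}(i)}\bigr]\bigr)_{i\in I}\bigr].
\]

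On the other hand, $\Upsilon_{\mu}[f](x) = D^{|I|}\sigma_{\mu}(x)\bigl[(\Upsilon_{\mu_{i}}[f_{i}](x))_{i\in I}\bigr]$, so computing $D^{\#\tilde{f}}\Upsilon_{\mu}[f]$ amounts to iterating the Leibniz/chain rule $|J|$ times on this composition. The Faà di Bruno expansion enumerates exactly the choices of, for each derivative $j \in J$, whether it hits the outer $\sigma_{\mu}$ (adding a slot) or descends into one of the inner $\Upsilon_{\mu_{i}}[f_{i}]$ terms. After evaluating on the inputs $(\Upsilon[\tilde{h}_{j}])_{j\in J}$, each resulting term is indexed by a subset $K \subset J$ (the derivatives that hit $\sigma_{\mu}$) and a map $\theta: J \setminus K \to I$ (distributing the remaining derivatives among the inner factors). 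Renaming $K$ and $J\setminus K$ (the grafting formula calls $K$ the set of \emph{attached} trees while the differentiation calls $K$ the \emph{unattached} ones) gives precisely the same sum as above.

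The main obstacle is bookkeeping: one must verify that the combinatorial correspondence between "which derivatives hit $\sigma_{\mu}$ versus which descend into the $\Upsilon_{\mu_{i}}[f_{i}]$" matches exactly the grafting decomposition "which $\tilde{h}_{j}$ remain next to $f$ versus which are attached onto some $f_{i}$", including the correct multilinear dependence on the $\Upsilon[\tilde{h}_{j}]$. Once the indexing sets and partitions are aligned, the two expressions are literally the same sum, completing the induction. No combinatorial weights appear because grafting is already summed over all attachment sites without symmetry factors, matching the fact that differentiating a product gives a sum over all choices of which factor each derivative hits.
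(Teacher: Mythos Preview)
Your proof is correct and follows essentially the same route as the paper: induction on $|f|$, expanding $\Upsilon_{\mu}[\tilde{f}\graft f]$ via the grafting recursion and the induction hypothesis on one side, computing $D^{\#\tilde{f}}\Upsilon_{\mu}[f]$ by the Leibniz/Fa\`a di Bruno rule on the other, and identifying the two sums via the correspondence $K\leftrightarrow\bigcup_{i}\theta^{-1}(i)$. The paper indexes the Leibniz expansion by a tuple of disjoint subsets $(J_i)_{i\in I}$ rather than a pair $(K,\theta)$, but this is only a cosmetic difference.
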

\begin{proof}
We prove the general case by induction in $|f|$ - the base case when $f = \ind$ is immediate.

We write $f$ and $\tilde{f}$ as in \eqref{eq: productform}. 
Now we note that, by Leibnitz rule and the definition of $\Upsilon$, for any $v = (v_{j})_{j \in J} \in (\R^{\dimY})^{J}$, 
\begin{equation}\label{eq:Leibnitz_rule}
\begin{split}
{}
&D^{\num \tilde{f}}\Upsilon_{\mu}[f][v]\\
{}& \quad = 
\sum_{ 
(J_{i}: i \in I)}
  D^{\num f + \num \tilde{f} - \sum_{i \in I} |J_{i}|}
\sigma_{\mu}
\Big[\big( 
D^{|J_{i}|}\Upsilon_{\mu_{i}}[f_{i}][v_{J_{i}}]\big)_{ i \in I }
\sqcup v_{J^{c}} \Big]\;,
\end{split}
\end{equation} 
where above the sum is over collections of disjoint subsets $(J_{i}: i \in I)$, $J_{i} \subset J$ and we write $v_{J_{i}} = (v_{j})_{j \in J_{i}}$ and $J^{c} = J \setminus \big( \bigcup_{i \in I} J_{i} \big)$.  

On the other hand, using the definition of the grafting procedure followed by that of $\Upsilon_{\mu}$
\begin{equation*}
\begin{split}	
{}&\Upsilon_{\mu} [\tilde{f} \graft f]\\
=&
\sum_{K \subset J}
\sum_{ \theta: K \rightarrow I}
\Upsilon_{\mu}
\Big[
\Big(
\prod_{j \in J \setminus K}
\tilde{h}_{j}
\Big)
\Big(
\prod_{i \in I}
\Big[
\big(
\prod_{k \in \theta^{-1}(i)}
\tilde{h}_{k}
\big)
\graft f_{i}
\Big]_{\mu_i}
\Big) 
\Big]\\
=&
\sum_{K \subset J}
\sum_{ \theta: K \rightarrow I}
 D^{|J \setminus K| + \num f}\sigma_{\mu}
\Bigg[
\Big(
\Upsilon[
\tilde{h}_{j}]
\Big)_{j \in J \setminus K}
\sqcup
\Big(
\Upsilon_{\mu_{i}}
\Big[
\big(
\prod_{k \in \theta^{-1}(i)}
\tilde{h}_{k}
\big)
\graft f_{i}
\Big]
\Big)_{i \in I} 
\Bigg]\;.
\end{split}
\end{equation*}
Now, by our induction hypothesis, for any  $\theta:K \rightarrow I$ and $i \in I$ we have
\[
\Upsilon_{\mu_{i}}
\Big[
\big(
\prod_{k \in \theta^{-1}(i)}
\tilde{h}_{k}
\big)
\graft f_{i}
\Big]
=
D^{ |\theta^{-1}(i)|}
\Upsilon_{\mu_{i}}[f_{i}]
\Big[
\big(  
\Upsilon[\tilde{h}_{k}] \big)_{k \in \theta^{-1}(i)}
\Big]\;.
\]
Inserting the above calculation into the previous equation gives 
\begin{equation*}
\begin{split}
{}&\Upsilon_{\mu} [\tilde{f} \graft f] \\
{}&=
\sum_{K \subset J}
\sum_{ \theta: K \rightarrow I}
 D^{|J \setminus K| + \num f}\sigma_{\mu}
\Big[
\big(
D^{|\theta^{-1}(i)|}
\Upsilon_{\mu_{i}}[f_{i}]
[(\Upsilon[\tilde{h}_j])_{j \in \theta^{-1}(i)}]
\big)_{i \in I} 
\sqcup
(\Upsilon[\tilde{h}_j])_{j \in J \setminus K}
\Big]\;,
\end{split}
\end{equation*}
which finishes the proof by comparing with \eqref{eq:Leibnitz_rule}, identifying $J_{i} \leftrightarrow \theta^{-1}(i)$, and setting $v = (\Upsilon[\tilde{h}_j])_{j \in J}$. 
\end{proof}
In what follows we define, for $g \in \forest{N-1}$ and $\mu \in [d]$, $\bUpsilon_{\mu}[g](\cdot) = \frac{1}{S(g)}\Upsilon_{\mu}[g](\cdot) $ and, for $h \in \rtree{ N-1}$, $\bUpsilon[h](\cdot)  = \frac{1}{S(h)}\Upsilon[h](\cdot) $. 
We also write 
\[
\bUpsilon_{\mu}(\cdot) = \sum_{g \in\forest{N-1}} \bUpsilon_{\mu}[g](\cdot)g\;. 
\]
In particular, $\bUpsilon_{\mu}: \R^{\dimY} \rightarrow \Span(\forest{N-1})^{\dimY}$ 
and for any $g \in\forest{N-1}$ we have $\bscal{\bUpsilon_{\mu}}{g} = \Upsilon_{\mu}[g]$. 
We also define
\[
\bUpsilon(\cdot)
=
\sum_{h \in \rtree{N-1}} \bUpsilon[h](\cdot)h\;.  \]
We see that if a tree path $\Y$ is coherent then, for any $s \in (0,1]$, we have $\delta\Y_{s} = \bUpsilon(Y_{s})$ and, for every $\mu \in [\dimX]$,  $\sigma_{\mu}(\Y_{s}) = \bUpsilon_{\mu}(Y_{s})$. 
\begin{lemme}\label{lemma:coherence}
Given any $ f \in\forest{N-1}$  and $\mu \in [\dimX]$, 
\[
\sum_{\tilde{f} \in\forest{N-1}}
\scal{ \tilde{f} \otimes f} { \Delta \bUpsilon_{\mu}} \tilde{f}
=
P_{\le N - |f| - 1}
\sum_{p =0}^{N-1}
\frac{1}{p!} D^{p}\bUpsilon_{\mu}[f]
\Big[
\underbrace{(\bUpsilon,\dots,\bUpsilon)}_{p\textnormal{-times}}\Big]\;.
\]
\end{lemme}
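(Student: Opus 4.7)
The plan is to rewrite both sides in a common ``elementary differentials on grafted forests'' form and match them. For the left-hand side, I would first insert the definition $\bUpsilon_\mu = \sum_{g \in \forest{N-1}} \bUpsilon_\mu[g]\, g$, so that
\[
\mathrm{LHS}
=
\sum_{\tilde f \in\forest{N-1}}
\Big( \sum_{g \in \forest{N-1}} \bUpsilon_\mu[g]\,\scal{\tilde f \otimes f}{\Delta g} \Big)\tilde f .
\]
Next I would convert the ordinary pairing $\scal{\tilde f \otimes f}{\Delta g}$ into the decorated one $\bscal{\tilde f \otimes f}{\Delta g}$, using $\bscal{a}{b} = S(a)\scal{a}{b}$ from \eqref{eq:inner_product_relation} together with the tensor-product rule of Definition~\ref{def:inner_product_two}. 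This produces the ratio $S(g)/(S(\tilde f)S(f))$. Applying Theorem~\ref{thm:graft_adjoint} turns $\bscal{\tilde f\otimes f}{\Delta g}$ into $\bscal{\tilde f \graft f}{g}$, which is $S(g)\scal{\tilde f \graft f}{g}$, and I would recognize $\bUpsilon_\mu[g]S(g) = \Upsilon_\mu[g]$. Collapsing the sum over $g$ by the identity $\sum_g \Upsilon_\mu[g]\scal{\tilde f \graft f}{g}=\Upsilon_\mu[\tilde f \graft f]$ (linearity of $\Upsilon_\mu$), and restricting to $|\tilde f|\le N-|f|-1$ since $\bUpsilon_\mu[g]$ is only defined for $|g|\le N-1$, I get
\[
\mathrm{LHS}
=
\sum_{\substack{\tilde f\in\forest{N-1}\\ |\tilde f|\le N-|f|-1}}
\frac{1}{S(\tilde f)\,S(f)}\,
\Upsilon_\mu[\tilde f \graft f]\;\tilde f .
\]
Finally, Lemma~\ref{lemma:grafting_upsilon} replaces $\Upsilon_\mu[\tilde f \graft f]$ by $D^{\num \tilde f}\Upsilon_\mu[f]\big[(\Upsilon[\tilde h])_{\tilde h \in \tilde f}\big]$.

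For the right-hand side, I would expand each $\bUpsilon = \sum_{h \in \rtree{N-1}} \bUpsilon[h]\,h$ in the $p$-linear form and use $\bUpsilon_\mu[f] = \Upsilon_\mu[f]/S(f)$ and $\bUpsilon[h_i]=\Upsilon[h_i]/S(h_i)$ to obtain
\[
\frac{1}{p!}\,D^p\bUpsilon_\mu[f]\big[(\bUpsilon,\dots,\bUpsilon)\big]
=
\frac{1}{p!\,S(f)}
\sum_{h_1,\dots,h_p \in\rtree{N-1}}
\frac{D^p\Upsilon_\mu[f]\big[(\Upsilon[h_i])_{i=1}^p\big]}{\prod_{i=1}^{p} S(h_i)} \prod_{i=1}^p h_i .
\]
I would then reorganize the sum over ordered tuples $(h_1,\dots,h_p)$ into a sum over unordered forests $\tilde f = \prod_i h_i$ of cardinality $p = \num\tilde f$, picking up a multinomial coefficient $p!/\prod_{h}(\tilde f_h!)$. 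Combining this with $\prod_{\tilde h\in \tilde f}S(\tilde h)=\prod_h S(h)^{\tilde f_h}$ and the definition $S(\tilde f)=\prod_{h}\tilde f_h!\,S(h)^{\tilde f_h}$ collapses the symmetry factors to $1/S(\tilde f)$. Summing over $p$ then rewrites the RHS, prior to the projection, as
\[
\sum_{\tilde f \in\forest{N-1}}
\frac{1}{S(\tilde f)\,S(f)}\,
D^{\num\tilde f}\Upsilon_\mu[f]\big[(\Upsilon[\tilde h])_{\tilde h \in \tilde f}\big]\,\tilde f ,
\]
and applying $P_{\le N-|f|-1}$ imposes exactly the constraint $|\tilde f|\le N-|f|-1$ found on the LHS.

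The two expressions therefore coincide termwise and the identity follows. The main bookkeeping obstacle I anticipate is the juggling between the two inner products $\scal{\bullet}{\bullet}$ and $\bscal{\bullet}{\bullet}$ and the symmetry factors; this is essentially the reconciliation of the combinatorial factor $p!/(\prod_h \tilde f_h!)$ arising from ordered-to-unordered re-indexing of the $p$-linear form on the right with the factor $S(g)/(S(\tilde f)S(f))$ produced on the left by the adjoint relation \eqref{eq:graft_adjoint} -- the identity $S(\tilde f)=\prod_h \tilde f_h!\,S(h)^{\tilde f_h}$ is exactly what makes the two match.
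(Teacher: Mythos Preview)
Your proposal is correct and follows essentially the same approach as the paper: converting to the $\bscal{\bullet}{\bullet}$ pairing, applying the adjoint relation Theorem~\ref{thm:graft_adjoint}, invoking Lemma~\ref{lemma:grafting_upsilon}, and then matching symmetry factors via the multinomial identity. The only cosmetic difference is that the paper computes $\scal{\tilde f\otimes f}{\Delta\bUpsilon_\mu}\tilde f$ for a single $\tilde f$ and then sums, rewriting directly toward the RHS, whereas you reduce both sides to the common intermediate expression $\sum_{\tilde f}\frac{1}{S(\tilde f)S(f)}D^{\#\tilde f}\Upsilon_\mu[f]\big[(\Upsilon[\tilde h])_{\tilde h\in\tilde f}\big]\tilde f$; the bookkeeping is identical.
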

\begin{proof}
We observe that 
\begin{equation*}
\begin{split}
\scal{ \tilde{f} \otimes f} { \Delta \bUpsilon_{\mu}} \tilde{f}
=& \frac{1}{S(\tilde{f})S(f)}
\bscal{ \tilde{f} \otimes f} { \Delta \bUpsilon_{\mu}} \tilde{f}\\
=&
\frac{1}{S(\tilde{f})S(f)}
\bscal{ \tilde{f} \graft f} { \bUpsilon_{\mu}} \tilde{f}
=
\frac{1\{ |f| + |\tilde{f}| < N\}}{ S(\tilde{f}) S(f)} 
 \Upsilon_{\mu}[ \tilde{f} \graft f] \tilde{f}\\
=& 
\frac{1\{ |f| + |\tilde{f}| < N\}}{ S(\tilde{f}) S(f)} 
 D^{\num \tilde{f}} \Upsilon_{\mu}[f]
 \Big[
\big(
\Upsilon[\tilde{h}]
\big)_{\tilde{h} \in \tilde{f}}
\Big] \tilde{f} \\
=& 
1\{ |f| + |\tilde{f}| < N\}
\frac{\mathrm{Multi}(f) }{\num \tilde{f}!}  D^{\num \tilde{f}} \bUpsilon_{\mu}[f]
\Big[
\big(
\bUpsilon[\tilde{h}] \tilde{h}
\big)_{\tilde{h} \in \tilde{f}}
\Big] \;.
\end{split}
\end{equation*} 
 
In the first equality above we are using  \eqref{eq:inner_product_relation} and in the second equality we are applying Theorem~\ref{thm:graft_adjoint}. 
In the third equality above we are using Lemma~\ref{lemma:grafting_upsilon}, and for the third equality we define the multinomial coefficient
\[
\mathrm{Multi}(\tilde{f}) =
\frac{\num \tilde{f}! }{\prod_{\tilde{h} \in \rtree{N-1}} \tilde{f}_{\tilde{h}}!}
\] 
where $\tilde{f}_{\tilde{h}}$ in again the number of instances of $\tilde{h}$ in $\tilde{f}$, so that we have
\[ S(\tilde{f}) = 
\frac{\num \tilde{f}!}{\mathrm{Multi}(\tilde{f})}  \prod_{\tilde{h} \in \tilde{f}}S(\tilde{h})\;.
\] 
It follows that
\begin{equation}\label{eq:identification_of_multilinear_deriv}
\begin{split}
\sum_{\tilde{f} \in\forest{N-1}}
\scal{ \tilde{f} \otimes f} { \Delta \bUpsilon_{\mu}} \tilde{f}
=& 
\sum_{\tilde{f} \in\forest{N-1}
\atop
|\tilde{f}| < N - |f|}
\frac{\mathrm{Multi}(\tilde{f})}{\num \tilde{f}!}   D^{\num \tilde{f}} \bUpsilon_{\mu}[f]
\Big[
\big(
\bUpsilon[\tilde{h}] \tilde{h}
\big)_{\tilde{h} \in \tilde{f}}
\Big]\\
=&
\sum_{p =0}^{N-1}
\frac{1}{p!}
\sum_{\tilde{f} \in\forest{N-1}
\atop
|\tilde{f}| < N - |f|}
1\{ \num f = p\} 
\mathrm{Multi}(\tilde{f}) D^{p}\bUpsilon_{\mu}[f]
\Big[
\big(
\bUpsilon[\tilde{h}]
\big)_{\tilde{h} \in \tilde{f}}
\Big]\\
=&
\sum_{p =0}^{N-1}
\frac{1}{p!}
\sum_{h_{1},\dots,h_{p} \in \rtree{N-1}} D^{p}\bUpsilon_{\mu}[f]
\Big[
\big(
\bUpsilon[h_{i}] h_{i}
\big)_{i=1}^{p}
\Big]\\
=&
P_{\le N - |f| - 1}
\sum_{p =0}^{N-1}
\frac{1}{p!}  D^{p}\bUpsilon_{\mu}[f]
\Big[
\underbrace{(\bUpsilon,\dots,\bUpsilon)}_{p\textnormal{-times}}\Big]\;.
\end{split}
\end{equation}
\end{proof}
The above lemma gives us the following corollary, the main result of this section. 
It expresses remainders at higher levels in terms of remainders at level 0.
\begin{corrolaire}\label{cor:niceformula}
For any coherent tree path $\Y$, $f \in\forest{N-1}$, and $\mu \in [\dimX]$, we have
\begin{equation}\label{eq:rem_formula-2}
\begin{split}
R^{\sigma_\mu(\Y),f}_{s,t}
&=
\bUpsilon_{\mu}[ f ](Y_{t})
-
\Big\langle\X_{s,t},
P_{\le N - |f| - 1}
\sum_{p =0}^{N-1}
\frac{1}{p!} D^{p}\bUpsilon_{\mu}[f](Y_{s})
\Big[
\underbrace{(\bUpsilon(Y_{s}),\dots,\bUpsilon(Y_{s}))}_{p\textnormal{-times}}\Big] 
\Big\rangle\;.
\end{split}
\end{equation}
\end{corrolaire}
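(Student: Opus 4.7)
The proof is essentially an unpacking of Lemma~\ref{lemma:coherence} through the definition of the remainder, so the heavy algebraic lifting is already done; what remains is to assemble the pieces. I would start directly from the definition
\[
R^{\sigma_\mu(\Y),f}_{s,t}
= \scal{f}{\sigma_\mu(\Y)_t} - \scal{\X_{s,t} \otimes f}{\Delta \sigma_\mu(\Y)_s}\;,
\]
and treat the two terms separately using coherence of $\Y$.

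\textbf{First term.} Since $\Y$ is coherent, the remark preceding Lemma~\ref{lemma:coherence} gives $\sigma_\mu(\Y)_u = \bUpsilon_\mu(Y_u) = \sum_{g \in \forest{N-1}} \bUpsilon_\mu[g](Y_u)\, g$ for each time $u$. Evaluating at $u = t$ and pairing with $f$ in the standard inner product immediately yields $\scal{f}{\sigma_\mu(\Y)_t} = \bUpsilon_\mu[f](Y_t)$, which is the first term on the right-hand side of \eqref{eq:rem_formula-2}.

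\textbf{Second term.} Using coherence at $u = s$ and linearity of $\Delta$ I would rewrite
\[
\scal{\X_{s,t} \otimes f}{\Delta \sigma_\mu(\Y)_s}
= \scal{\X_{s,t} \otimes f}{\Delta \bUpsilon_\mu(Y_s)}
= \Big\langle \X_{s,t} \,,\, \sum_{\tilde{f} \in \forest{N-1}} \scal{\tilde{f} \otimes f}{\Delta \bUpsilon_\mu(Y_s)} \, \tilde{f} \Big\rangle\;,
\]
where the last equality comes from factoring the tensor pairing: since $\bUpsilon_\mu$ is supported on $\forest{N-1}$ and $\Delta$ preserves the number of nodes, only $\tilde{f}$'s with $|\tilde{f}| \le N - |f| - 1$ give a nonzero contribution, so restricting the sum to $\forest{N-1}$ loses nothing. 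Now Lemma~\ref{lemma:coherence}, applied at the point $x = Y_s$, identifies the bracketed sum exactly with $P_{\le N-|f|-1} \sum_{p=0}^{N-1}\frac{1}{p!} D^p \bUpsilon_\mu[f](Y_s)\bigl[(\bUpsilon(Y_s),\dots,\bUpsilon(Y_s))\bigr]$. Substituting this into the display above and combining with the first term gives \eqref{eq:rem_formula-2}.

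\textbf{Main obstacle.} There is no real obstacle at the level of this corollary: all the combinatorial effort has been absorbed into Theorem~\ref{thm:graft_adjoint} (the grafting/coproduct adjunction), Lemma~\ref{lemma:grafting_upsilon} (grafting and elementary differentials), and Lemma~\ref{lemma:coherence}. The only small care needed is bookkeeping with the projection $P_{\le N - |f| - 1}$, i.e.\ checking that the degree truncations of $\bUpsilon_\mu$ and of the inner sum are compatible so that no spurious or missing terms appear when one trades $\sum_{\tilde{f}} \scal{\tilde{f} \otimes f}{\Delta \bUpsilon_\mu(Y_s)}\tilde f$ for the explicit Taylor-type expression on the right of \eqref{eq:rem_formula-2}; this follows from the degree-preservation property $\scal{\Delta \bar f}{\tilde f \otimes f} \neq 0 \Rightarrow |\bar f| = |\tilde f| + |f|$ already noted after the definition of $\Delta$.
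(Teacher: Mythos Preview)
Your proposal is correct and follows essentially the same approach as the paper's own proof: start from the definition of the remainder, use coherence to replace $\sigma_{\mu}(\Y)_{u}$ by $\bUpsilon_{\mu}(Y_{u})$ at $u=t$ and $u=s$, expand the second term over $\tilde f \in \forest{N-1}$, and then invoke Lemma~\ref{lemma:coherence}. Your remark on degree preservation justifying the restriction to $\forest{N-1}$ is a small clarification the paper leaves implicit, but otherwise the arguments are identical.
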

\begin{proof}
Using the definition of coherence we can write
\begin{equation*}
\begin{split}
R^{\sigma_{\mu}(\Y),f}_{s,t} 
=& 
\scal{f}{ \sigma_{\mu}(\Y_t)} - \scal{ \X_{s,t} \otimes f}{ \Delta \sigma_{\mu}(\Y_t)}\\
=&
\bUpsilon_{\mu}[f](Y_{t})
-
\scal{ \X_{s,t} \otimes f}{ \Delta \bUpsilon_{\mu}(Y_s)}.	
\end{split}
\end{equation*}
We then use Lemma~\ref{lemma:coherence} to see that
\begin{equation*}
\begin{split}
\scal{ \X_{s,t} \otimes f}{ \Delta \bUpsilon_{\mu}(Y_s)}
=& 
\Big\langle 
\X_{s,t},
\sum_{\tilde{f} \in\forest{N-1}}
\scal{\tilde{f} \otimes f}
{\Delta \bUpsilon_{\mu}(Y_s)} \tilde{f}
\Big\rangle\\
=& 
\Big\langle 
\X_{s,t},
P_{\le N - |f| - 1}
\sum_{p =0}^{N-1}
\frac{1}{p!} D^{p}\bUpsilon_{\mu}[f](Y_{s})
\Big[
\underbrace{(\bUpsilon(Y_{s}),\dots,\bUpsilon(Y_{s}))}_{p\textnormal{-times}}\Big]
\Big\rangle\;.
\end{split}
\end{equation*}
\end{proof}
\begin{remark}
Note that if we know that the functions $(\sigma_{\mu})_{\mu=1}^{d}$ are actually smooth then the $\Upsilon_{\mu}$  make sense on $\CF_{\all}$  and it easy to check that the various lemmas and corollaries of this subsection hold without any constraint on the orders of the participating forests.
\end{remark}

\subsection{The main remainder estimate}
In order to estimate \eqref{eq:rem_formula-2} we will use the Taylor expansion/remainder formula of \cite[Proposition A.1]{hairer2014theory}.

We will want to impose a stopping rule on generating our Taylor expansion that takes into the consideration the order forests that are produced, a Taylor expansion like \cite[Proposition A.1]{hairer2014theory} is convenient because it lets us enforce a rule on how the expansion is iteratively generated.

We say $A \subset \CF$ is \emph{full} if it is non-empty, finite, and, for any $f \in A$, we have 
\[
\{\tilde{f}  \in \mathcal{F}: \tilde{f}_{h} \le f_{h} \textnormal{ for every } h \in \CF \}
\subset A\;.
\]
Note that, for any $M \in \mathbb{N}$, $\forest{M}$ is full. 

We now fix some arbitrary total order on  $\mathring{\mathcal{T}}_{\le N-1}$. 
For any $A \subset \CF$ we define a ``boundary'' of $A$ by setting
\begin{equation}\label{def-partialA}
\partial A = 
\left\{ f \in \CF \setminus (A \cup \ind) : 
(f_{h}  - 1\{h = \mathfrak{m}(f)\} : h \in \rtree{N-1}) \in A
\right\}\;,
\end{equation} 
where 
\[
\mathfrak{m}(f) = \min \{h \in \rtree{N-1} : f_{h} \not = 0\}\;
\]  
is the minimal tree appearing in the non-empty forest $f$. 
In words, $f \in \partial A$ if $f$ is non-empty, does not belong to $A$, but if one reduces the number of instances of the minimal tree appearing in $f$ by $1$ then the resulting forest does belong to $A$. %

We can then state the formulation of \cite[Proposition A.1]{hairer2014theory} we will use. 
\begin{proposition}\label{prop:tay_remainder}
Let $A \subset \CF$ be full and $F \in  C^{K + 1}([0,1]^{\rtree{N-1}})$ where $K = \max_{f \in A} \num f$, then
\[
F(\underline{1}) =
\sum_{f \in A}
\frac{\partial^{f}F(0)}{f!}
+
\sum_{f \in \partial A}
\int_{[0,1]^{\rtree{N-1}}} 
\partial^{f}F(y)
Q^{f}(dy)
\]
where $\underline{1} = (1 : h \in \CF) \in  [0,1]^{\rtree{N-1}}$ is vector of all ones and we are using standard notation for derivatives and monomials indexed by multi-indices $f = (f_{h} : h \in \rtree{N-1}) \in \CF$, and for each $f \in \CF$, $Q^{f}(dy)$ is measure of total mass $1$ on $[0,1]^{\rtree{N-1}}$. 
\end{proposition}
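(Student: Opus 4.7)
My plan is to prove this by induction on $|A|$, using iterated one-dimensional Taylor expansions with integral remainder, choosing in each step the coordinate direction dictated by the map $\mathfrak{m}$.

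For the \textbf{base case} $A = \{\ind\}$ (so $K = 0$), one has $\partial A = \{\{h\} : h \in \rtree{N-1}\}$ and the formula reduces to the fundamental theorem of calculus along the diagonal path $\gamma(t) = t\cdot\underline{1}$:
\[
F(\underline{1}) = F(\underline{0}) + \int_{0}^{1} \frac{d}{dt}F(\gamma(t))\,dt = F(\underline{0}) + \sum_{h \in \rtree{N-1}} \int_{0}^{1} \partial^{\{h\}} F(\gamma(t))\,dt.
\]
Here $Q^{\{h\}}$ is the pushforward of Lebesgue measure on $[0,1]$ by $\gamma$, which is a probability measure, while $\partial^{\ind}F(0)/\ind! = F(\underline 0)$.

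For the \textbf{inductive step}, using that $A$ is full and finite, I would enumerate its elements as $\ind = f^{(1)}, f^{(2)}, \ldots, f^{(|A|)}$ so that every initial segment $A_{k} = \{f^{(1)}, \ldots, f^{(k)}\}$ is itself full (e.g.\ by adding a maximal element with respect to the product partial order refined by the total order on $\rtree{N-1}$, peeled off in reverse). Assuming the formula holds for $A_{k}$, the goal is to rewrite the remainder corresponding to $f = f^{(k+1)} \in \partial A_{k}$. Setting $h = \mathfrak{m}(f)$ and $p = f_{h}$, apply the one-dimensional Taylor expansion with integral remainder of order $p-1$ in the coordinate $y_{h}$ (keeping the other coordinates fixed and then integrating against $Q^{f}$). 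The boundary term $y_{h}=0$ inside $Q^{f}$ produces a multiple of $\partial^{f}F(0)$, and the constant $1/f!$ emerges after combining the $1/(p-1)!$ Taylor factor with the symmetry weight already carried by $Q^{f}$ for the coordinate $h$. The integral remainder, by construction, involves $\partial^{f + \{h'\}}F$ for trees $h' \ge h = \mathfrak{m}(f)$, which is precisely the condition characterising $\partial A_{k+1} \setminus \partial A_{k}$: the minimum tree of $f + \{h'\}$ is $h$ since $h' \ge h$, so subtracting it leaves $f \in A_{k+1}$ but not in $A_{k}$.

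The \textbf{main obstacle} is the combinatorial bookkeeping: one must check that in passing from $A_{k}$ to $A_{k+1}$, the new remainder terms generated by the one-dimensional Taylor expansion align \emph{exactly} with $\partial A_{k+1}$, with no double-counting between successive steps. The role of $\mathfrak{m}$ is precisely to impose a canonical lexicographic ordering on how multi-indices are generated, ensuring that each $g \in \partial A_{k+1}$ has a unique ancestor $f \in A_{k+1}$ obtained by removing $\mathfrak{m}(g)$. The second bookkeeping point is keeping track of the probability normalization of $Q^{f}$: the measures arise as iterated products of one-dimensional measures with densities of the form $(1-t)^{p-1}/(p-1)!$, each integrating to $1/p!$ over $[0,1]$, and matching these against the denominator $f!$ yields probability measures. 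Once these two combinatorial points are verified, the induction closes and the formula for $A$ follows.
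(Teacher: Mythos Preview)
The paper does not give its own proof of this proposition; it is quoted from \cite[Proposition~A.1]{hairer2014theory}. Your overall strategy --- inducting on $|A|$ by adding one forest at a time so that each $A_k$ stays full --- is indeed the skeleton of Hairer's argument. But the inductive step as you describe it does not close.

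The concrete problem is twofold. First, a one-dimensional Taylor expansion of $\partial^{f}F(y)$ in the single coordinate $y_h$ only produces remainders carrying extra $h$-derivatives, i.e.\ forests $f + j\{h\}$; it cannot manufacture $\partial^{f+\{h'\}}F$ for $h' \ne h$. Second, and relatedly, your description of $\partial A_{k+1}\setminus \partial A_k$ is wrong: the new boundary forests are $g = f+\{h'\}$ with $h' \le h$, not $h' \ge h$. Indeed $\mathfrak m(f+\{h'\}) = \min(h,h')$, so one needs $h' \le h$ in order that $\mathfrak m(g) = h'$ and hence $g - \{\mathfrak m(g)\} = f \in A_{k+1}$; your computation ``subtracting $h$ leaves $f$'' fails for $h' \ne h$ since $f + \{h'\} - \{h\} = (f - \{h\}) + \{h'\}$. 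What actually works is to maintain the inductive invariant that each $Q^f$ is supported on $\{y : y_{h'} = 0 \text{ for all } h' > \mathfrak m(f)\}$, and then at each step apply a \emph{multi-coordinate} telescope
\[
\partial^{f}F(y) \;=\; \partial^{f}F(0) \;+\; \sum_{h' \le h}\int_0^{y_{h'}}\partial^{f+\{h'\}}F\big(y^{(<h')} + s\,e_{h'}\big)\,ds\;,
\]
valid for $y$ in the support of $Q^{f}$, where $y^{(<h')}$ retains only the coordinates strictly below $h'$ and zeroes the rest. Your diagonal-path base case does not satisfy this support invariant (setting $y_{h}=0$ on the diagonal does not land at the origin), so the induction cannot even get started; one should telescope coordinate-by-coordinate already at the base step.
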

We can now state the key lemma for this section. 
\begin{corrolaire}
\label{cor:one_controls_others} 
Suppose the tree path $\Y$ is coherent, then, for any $\mu \in [\dimX]$ and $f \in\forest{N-1}$,  
\begin{equation}
\begin{split}
\label{e:wish}
&
|R^{\sigma_\mu (\Y),f}_{s,t}|
\lesssim
\sup \left\{ 
\big|D \Upsilon_{\mu}[f]\big( Y_{s} + a \big) \big|  \cdot  |R^{\Y,\ind}_{s,t}|
:
a \in \mathbb{R}^{\dimY}, |a| \le E_{s,t}^{\Y} + |Y_{t}-Y_{s}|
\right\}\\
{}& +
\sup
\left\{ 
\Big|  D^{\num \tilde{f} } \Upsilon_{\mu}[f](Y_{s} + z) \prod_{h \in \tilde{f}} \Upsilon[h](Y_{s})\scal{\X_{s,t}}{h} \Big|:\ 
\begin{array}{c}
 \tilde{f} \in \partial \forest{N- |f| -1} \\
z \in \R^{\dimY}, |z| \le E_{s,t}^{\Y}
\end{array}
\right\}\;,
\end{split}
\end{equation}
where 
\begin{equation}\label{eq:expansion_bound}
E_{s,t}^{\Y} = \sum_{h \in \rtree{N-1}} | \Upsilon[h](Y_{s}) \scal{\X_{s,t}}{h}|\;.
\end{equation} 
\end{corrolaire}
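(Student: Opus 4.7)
The plan is to combine the explicit formula from Corollary~\ref{cor:niceformula} with the Taylor remainder formula of Proposition~\ref{prop:tay_remainder}. Set
\[
v \eqdef \sum_{h \in \rtree{N-1}} \bUpsilon[h](Y_s)\scal{\X_{s,t}}{h} \in \R^{\dimY},
\]
and introduce the auxiliary function $F : [0,1]^{\rtree{N-1}} \to \R^{\dimY}$ defined by
\[
F(y) \eqdef \bUpsilon_{\mu}[f]\Big(Y_s + \sum_{h \in \rtree{N-1}} y_h \bUpsilon[h](Y_s)\scal{\X_{s,t}}{h}\Big)\;,
\]
so that $F(\underline 1) = \bUpsilon_{\mu}[f](Y_s + v)$ and, for any multi-index $\tilde f = (\tilde f_h : h \in \rtree{N-1})$,
\[
\partial^{\tilde f}F(y) = D^{\num \tilde f}\bUpsilon_{\mu}[f]\Big(Y_s + {\textstyle \sum_h} y_h \bUpsilon[h](Y_s)\scal{\X_{s,t}}{h}\Big) \prod_{h \in \rtree{N-1}}\big(\bUpsilon[h](Y_s)\scal{\X_{s,t}}{h}\big)^{\tilde f_h}\;.
\]

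Using the multinomial expansion together with the character property $\scal{\X_{s,t}}{h_1 \cdots h_p} = \prod_i \scal{\X_{s,t}}{h_i}$, the inner product appearing in \eqref{eq:rem_formula-2} can be regrouped into a sum over multisets, yielding
\[
\Big\langle \X_{s,t}, P_{\le N-|f|-1}\sum_{p=0}^{N-1}\frac{1}{p!} D^p \bUpsilon_{\mu}[f](Y_s)\big[(\bUpsilon(Y_s))^p\big] \Big\rangle = \sum_{\tilde f \in A} \frac{\partial^{\tilde f} F(0)}{\tilde f!}\;,
\]
where $A \eqdef \forest{N-|f|-1}$; the projection $P_{\le N-|f|-1}$ is precisely what restricts to this set. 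Since $A$ is full and $\sigma \in C^N$ gives enough regularity for $F$, Proposition~\ref{prop:tay_remainder} combined with Corollary~\ref{cor:niceformula} produces the identity
\[
R^{\sigma_{\mu}(\Y),f}_{s,t} = \bUpsilon_{\mu}[f](Y_t) - \bUpsilon_{\mu}[f](Y_s + v) - \sum_{\tilde f \in \partial A}\int_{[0,1]^{\rtree{N-1}}} \partial^{\tilde f}F(y)\, Q^{\tilde f}(dy)\;.
\]

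For the first difference, I appeal to coherence exactly as in Remark~\ref{rem:drift}: the counit identity $\scal{\X_{s,t}\otimes \ind}{\Delta \Y_s} = Y_s + \scal{\X_{s,t}}{\delta\Y_s}$ together with $\scal{h}{\Y_s} = \bUpsilon[h](Y_s)$ for $h \in \rtree{N-1}$ gives $v = \scal{\X_{s,t}}{\delta\Y_s}$ and hence $Y_s + v = Y_t - R^{\Y,\ind}_{s,t}$. The fundamental theorem of calculus then yields
\[
\bUpsilon_{\mu}[f](Y_t) - \bUpsilon_{\mu}[f](Y_s + v) = R^{\Y,\ind}_{s,t} \int_0^1 D\bUpsilon_{\mu}[f]\bigl((1-\theta)(Y_s+v) + \theta Y_t\bigr)\, d\theta\;,
\]
whose integrand is of the form $D\bUpsilon_{\mu}[f](Y_s + a)$ with $|a| \le (1-\theta)|v| + \theta |Y_t - Y_s| \le E^{\Y}_{s,t} + |Y_t - Y_s|$; taking the supremum produces the first term on the right-hand side of \eqref{e:wish}. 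For the Taylor remainder term, each $\partial^{\tilde f}F(y)$ is evaluated at a point $Y_s + z$ with $|z| \le \sum_h |\bUpsilon[h](Y_s)\scal{\X_{s,t}}{h}| \le E^{\Y}_{s,t}$, and since $Q^{\tilde f}$ has total mass $1$ a uniform bound in $y$ suffices; the uniformly bounded symmetry factors $1/(\tilde f!\, S(f)\prod_h S(h)^{\tilde f_h})$ that convert $\bUpsilon$ to $\Upsilon$ are absorbed into the implicit constant of $\lesssim$, yielding the second term of \eqref{e:wish}.

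The main obstacle is the combinatorial identification in the second paragraph: one must verify that the constrained multilinear sum $\sum_{h_1,\ldots,h_p : \sum_i |h_i| \le N-|f|-1}$ obtained after pairing with $\X_{s,t}$ reassembles cleanly into the multi-indexed Taylor polynomial $\sum_{\tilde f \in A} \partial^{\tilde f}F(0)/\tilde f!$. Once the bookkeeping of multiplicities versus factorials is in place and the character property of $\X_{s,t}$ is invoked, everything else reduces to a standard Taylor/FTC estimate.
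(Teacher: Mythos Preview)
Your proof is correct and follows essentially the same route as the paper: define the same auxiliary function $F$, split $R^{\sigma_\mu(\Y),f}_{s,t}$ by adding and subtracting $F(\underline{1}) = \bUpsilon_{\mu}[f](Y_s+v)$, control the first difference by the mean value theorem (the paper's parametrisation $Y_s + \Y_{s,t}(\underline{1}) + \lambda R^{\Y,\ind}_{s,t}$ is exactly your line segment), and handle the second by identifying the Taylor polynomial of $F$ at $0$ over $A = \forest{N-|f|-1}$ with the inner product in \eqref{eq:rem_formula-2} before invoking Proposition~\ref{prop:tay_remainder}. The combinatorial identification you flag as the ``main obstacle'' is precisely what the paper records as \eqref{eq:Tay_expansion}, established by the computation \eqref{eq:identification_of_multilinear_deriv} in the proof of Lemma~\ref{lemma:coherence}; your multinomial/character argument is the same thing.
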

Before turning to the proof, we point out that \eqref{e:wish} estimates $R^{\sigma_\mu (\Y),f}_{s,t}$ by treating it as a multivariable Taylor remainder - this is why we see suprema of derivatives on the right hand side above. 
Being careful about the constraints on the arguments of these derivatives will be important, in particular when working with coefficients $\sigma$ that are allowed to grow at infinity,  see Assumption~\ref{Ass-Sigma-Polynomial}.  
\begin{proof}
Given any $x = (x_{h}) \in [0,1]^{\rtree{N-1}}$ we define
\[
\Y_{s,t}(x)
=  \sum_{h \in \rtree{N-1}}  x_{h} \bUpsilon[h](Y_{s}) \scal{\X_{s,t}}{h}\;.
\]
We then have
\begin{equation*}
Y_{s} + \Y_{s,t}(\underline{1}) + R^{\Y,\ind}_{s,t} = Y_{t}
\end{equation*}

We now define the function $F: [0,1]^{\rtree{N-1}} \rightarrow \R^{\dimY}$ given by   
\[
F(x) = \bUpsilon_{\mu}[f]
\Big( Y_{s} + \Y_{s,t}(x) \Big).
\]
Now, by Corollary~\ref{cor:niceformula} and adding and subtracting $F(\underline{1})$, we have 
\begin{equation}\label{eq:work_for_rembound}
\begin{split}
R^{\sigma_\mu(\Y),f}_{s,t}
&\le 
\Big| 
\bUpsilon_{\mu}[ f ](Y_{s} + \Y_{s,t}(\underline{1}) + R^{\Y,\ind}_{s,t}) - 
\bUpsilon_{\mu}[f](Y_{s} + \Y_{s,t}(\underline{1})) \Big|\\
& \quad
+
\Big| 
F(\underline{1}) 
-
\Big\langle\X_{s,t},
P_{\le  N - |f| - 1}
\sum_{p =0}^{N-1}
\frac{1}{p!} D^{p}\bUpsilon_{\mu}[f]
\Big[
\underbrace{(\bUpsilon,\dots,\bUpsilon)}_{p\textnormal{-times}}\Big] 
\Big\rangle \Big| \;.
\end{split}
\end{equation}
By the mean value theorem the first term on the right hand side of \eqref{eq:work_for_rembound} is bounded by
\[
\sup_{\lambda \in [0,1]}
\big| D\bUpsilon_{\mu}[ f ](Y_{s} + \Y_{s,t}(\underline{1}) + \lambda R^{\Y,\ind}_{s,t}) \big|
\cdot
|R^{\Y,\ind}_{s,t}|\;,
\]
which in turn is bounded by the first term on the right hand side of \eqref{e:wish}. 

We now claim that
\begin{equation}\label{eq:Tay_expansion}
\begin{split}
\sum_{\tilde{f} \in \forest{N - |f| = 1}}
\frac{\partial^{\tilde{f} }F(0) }{\tilde{f}!}
=&
\Big\langle 
\X_{s,t},
P_{\le N - |f| - 1}
\sum_{p =0}^{N-1}
\frac{1}{p!} D^{p}\bUpsilon_{\mu}[f](Y_{s})
\Big[
\underbrace{(\bUpsilon(Y_{s}),\dots,\bUpsilon(Y_{s}))}_{p\textnormal{-times}}\Big]
\Big\rangle\;\;.
\end{split}
\end{equation}
To verify that \eqref{eq:Tay_expansion} holds, note that we can substitute into \eqref{eq:Tay_expansion}  the right hand side of the first line of \eqref{eq:identification_of_multilinear_deriv} and also use that
\[
\frac{\mathrm{Multi}(\tilde{f})}{\num \tilde{f}!} = \frac{1}{\tilde{f}!}
\quad
\textnormal{and}
\quad
\Big\langle 
\X_{s,t},
 D^{\num \tilde{f}} \bUpsilon_{\mu}[f](Y_{s})
\Big[
\big(
\bUpsilon[\tilde{h}](Y_{s})\tilde{h}
\big)_{\tilde{h} \in \tilde{f}}
\Big]
\Big\rangle
=
\partial^{\tilde{f}}F(0)
\;.
\]
Applying Proposition~\ref{prop:tay_remainder} to $F(\underline{1})$ with $A=\forest{N-1-|f|}$ then gives us 
\begin{equation}\label{eq:remainder_of_remainder}
\begin{split}
{}&F(\underline{1})
-
\Big\langle 
\X_{s,t},
P_{\le N - |f| - 1}
\sum_{p =0}^{N-1}
\frac{1}{p!} D^{p}\bUpsilon_{\mu}[f](Y_{s})
\Big[
\underbrace{(\bUpsilon(Y_{s}),\dots,\bUpsilon(Y_{s}))}_{p\textnormal{-times}}\Big]
\Big\rangle \\
{}&=
\sum_{\tilde{f} \in \partial \forest{N-1-|f|} }
\int_{[0,1]^{\rtree{N-1}}}(\partial^{\tilde{f}}F)(y) Q^{\tilde{f}}(dy)\;.
\end{split}
\end{equation}
We then see the second term on the right hand side of \eqref{eq:work_for_rembound} is bounded by the second term on the right hand side of \eqref{e:wish}. 
\end{proof}

\section{ Proof of main Theorems}\label{sec:final}
\label{equ}

Before proceeding to the details of the proof we give a brief overview. The key idea is contained in the \emph{interior regularity estimate}, Lemma~\ref{interior_regularity},
which in turn builds on the Sewing Lemma, Proposition~\ref{prop:integral_bound}, and Corollary~\ref{cor:one_controls_others}. More precisely, the Sewing Lemma provides an estimate of order 
$(N+1) \alpha$ on the rough integral $Z$ in terms of the various order bounds $[\X: [f]_{\mu}]$ and the semi-norms $[\sigma_\mu(\Y) : f ]$ which measure the quality of local approximations of 
all coefficients. 
According to Corollary~\ref{cor:one_controls_others} the semi-norms $[\sigma_\mu(\Y) : f ]$ can   be controlled in terms of
\begin{itemize}
\item the size of the coefficients $\Upsilon$ on the relevant interval, 
\item the order bounds $[\X: [f]_{\mu}]$, as well as
\item the remainder $R^{\Y,\ind}_{s,t}$.
\end{itemize}
It is important to note the relevant estimate \eqref{e:wish} is \emph{linear} in $|R^{\Y,\ind}_{s,t}|$ and that $|R^{\Y,\ind}_{s,t}|$ describes an error of order $N\alpha$ in the local description of $Y$. Furthermore, the quantity $|R^{\Y,\ind}_{s,t}|$ 
 can be easily estimated in terms of the corresponding quantity for $Z$. Combining all of these estimates one obtains an estimate on the error of the local approximation of order $N+1$ of $Z$ in terms of (the coefficients $\Upsilon$, the order bounds $[\X: [f]_{\mu}]$ and) the quantities $|R^{\Z,\ind}_{s,t}|$ which measure the error in the approximation of order $N\alpha$. This can be seen as a regularity improvement from $N\alpha$ to $(N+1)\alpha$,  which in turn allows to absorb the term $|R^{\Z,\ind}_{s,t}|$ provided one works on a small-enough interval. 
The proof of Theorem~\ref{thm:main} then mostly consists of plugging (a truncated version of) the \emph{interior regularity estimate}, Lemma~\ref{interior_regularity}, into Lemma~\ref{lemma:MW}, which captures the damping of the non-linear term as  discussed in the introduction, and choosing appropriate parameters depending on the growth assumption on the coefficient $\sigma$. The final iteration argument (see Step 3 of the Proof of Theorem~\ref{thm:main} 1)) is an adaptation of the \emph{additive noise case}, \cite[Proof of Lemma 2.1, Step 6]{MW}.

We start the argument with the following simple lemma that translates Assumptions~\ref{Ass-Sigma-Bounded} and~\ref{Ass-Sigma-Polynomial} into estimates on $\Upsilon[h]$ for $h \in \rCT_{\le N}$. 
\begin{lemme}
\label{l:Assumption-imply-Upsilon}  
Let $h \in \rCT_{\le N}$ and $0 \le p \le N - |h| + 1$. 

Suppose that Assumption~\ref{Ass-Sigma-Bounded} holds, then  
\begin{equation}\label{upsilon-bound-sigma-bounded}
\sup_{y \in \R^{\dimY}}
|D^{p} \Upsilon[h](y)|
\lesssim 
C_{\sigma}^{|h|}\;.
\end{equation}

Suppose instead that Assumption~\ref{Ass-Sigma-Polynomial} holds, then 
\begin{equation}\label{upsilon-bound-sigma-poly}
|D^{p} \Upsilon[h](y)| \lesssim C_{\sigma}^{|h|} \jbrac{y}^{(\gamma - 1) |h| + 1 - p} \;.
\end{equation}
\end{lemme}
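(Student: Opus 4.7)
I would prove the lemma by induction on $|h|$, using the recursive definition \eqref{eq:upsilon_def2} of the coherence coefficients.

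\textbf{Base case.} For $|h|=1$, we have $h = [\ind]_{\mu} = \bullet_{\mu}$ for some $\mu \in [d]$, hence $\Upsilon[h] = \sigma_{\mu}$. The required bounds then read $|D^{p}\sigma_{\mu}(y)| \lesssim C_{\sigma}$ under Assumption~\ref{Ass-Sigma-Bounded} and $|D^{p}\sigma_{\mu}(y)| \lesssim C_{\sigma}\jbrac{y}^{\gamma - p}$ under Assumption~\ref{Ass-Sigma-Polynomial}, both of which hold directly for $p \le N$, which is precisely the allowed range $p \le N - |h| + 1 = N$.

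\textbf{Inductive step.} Let $h = [f]_{\mu}$ with $f = \prod_{i \in I} h_{i}$, so $|h| = 1 + \sum_{i \in I} |h_{i}|$. By \eqref{eq:upsilon_def2} we have $\Upsilon[h](y) = D^{|I|}\sigma_{\mu}(y)\big[(\Upsilon[h_{i}](y))_{i \in I}\big]$. The multivariable Leibniz rule, applied to this product of the $y$-dependent $|I|$-linear form $D^{|I|}\sigma_\mu(y)$ and the $|I|$ vector-valued functions $\Upsilon[h_i](y)$, yields
\begin{equation*}
D^{p}\Upsilon[h](y)
=
\sum_{p_{0} + \sum_{i \in I} p_{i} = p}
\binom{p}{p_{0},(p_{i})_{i \in I}}\,
D^{|I| + p_{0}}\sigma_{\mu}(y)\big[(D^{p_{i}}\Upsilon[h_{i}](y))_{i \in I}\big].
\end{equation*}
I would then bound each term by the inductive hypothesis on each $\Upsilon[h_{i}]$ together with the direct estimate on $\sigma_{\mu}$.

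\textbf{Bookkeeping on derivative orders.} The main thing to verify is that each derivative order appearing above is within the range where the hypothesis applies. Since every $|h_{i}| \ge 1$ we have $|I| \le \sum_{i} |h_{i}| = |h| - 1$, and $p \le N - |h| + 1$ by assumption. Therefore
\begin{equation*}
|I| + p_{0} \;\le\; (|h|-1) + p \;\le\; N\;,
\qquad
p_{i} \;\le\; p \;\le\; N - |h| + 1 \;\le\; N - |h_{i}| + 1\;,
\end{equation*}
so the $\sigma_{\mu}$-derivative is admissible under both assumptions and the inductive bound is available for each factor $D^{p_i}\Upsilon[h_i]$.

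\textbf{Combining the bounds.} Under Assumption~\ref{Ass-Sigma-Bounded} each factor is bounded by $C_{\sigma}$ and $C_{\sigma}^{|h_{i}|}$ respectively, so multiplying gives $C_{\sigma}^{1 + \sum_{i}|h_{i}|} = C_{\sigma}^{|h|}$, with the finite sum over compositions absorbed in $\lesssim$. Under Assumption~\ref{Ass-Sigma-Polynomial}, using $\jbrac{y}^{a}\jbrac{y}^{b} = \jbrac{y}^{a+b}$ and $p_{0} + \sum_{i} p_{i} = p$, the product of weights becomes
\begin{equation*}
\jbrac{y}^{\gamma - |I| - p_{0}} \prod_{i \in I} \jbrac{y}^{(\gamma-1)|h_{i}| + 1 - p_{i}}
= \jbrac{y}^{\gamma + (\gamma-1)\sum_{i}|h_{i}| - p}
= \jbrac{y}^{(\gamma-1)|h| + 1 - p}\;,
\end{equation*}
which is the desired exponent, and the $C_\sigma$-power is again $C_\sigma^{|h|}$. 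The only subtle point in the whole argument is the derivative-order bookkeeping above; once that is in place the computation is purely a matter of collecting exponents of $C_{\sigma}$ and of $\jbrac{y}$.
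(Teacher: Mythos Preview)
Your proof is correct and follows essentially the same inductive strategy as the paper, based on the recursive definition \eqref{eq:upsilon_def2} of $\Upsilon$. The paper phrases the induction slightly differently---it first records the product-over-nodes bound $|\Upsilon[h](y)| \lesssim \prod_{u \in N_{h}} |D^{c(u)}\sigma_{\mu(u)}(y)|$ and then reads off the exponents using that $|N_{h}| = |h|$ and $\sum_{u} c(u) = |h|-1$, while you fold the exponent counting directly into the induction via Leibniz; but the content is the same. If anything, your treatment is more complete: the paper only writes out the case $p=0$ and asserts that $p \neq 0$ is similar, whereas you carry out the derivative-order bookkeeping $|I|+p_{0}\le N$ and $p_{i}\le N-|h_{i}|+1$ explicitly, which is exactly what is needed to justify that assertion.
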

\begin{proof}
We treat the case $p = 0$, the case of $p \not = 0$ can be treated similarly - the constraint on $p$ comes from the fact that we control only $N$ derivatives of the $(\sigma_{\mu})_{\mu \in \dimX}$. 

A straightforward inductive argument shows that 
\[
|\Upsilon[h](y)|
\lesssim 
\prod_{u \in N_{h}}
|D^{c(u)} \sigma_{\mu(u)}(y)|\;,
\]
where on the right hand side we are taking an appropriate operator norm, $N_{h}$ denotes the set of nodes of the tree $h$ and, given $u \in N_{h}$, we write $c(u)$ for the number of children nodes of $u$ in $h$ (that is, the number of nodes directly attached to $u$ that are further from the root) and  $\mu(u) \in [d]$ for the value of the node decoration that $u$ carries. 

Using the fact that the cardinality of $N_{h}$ is $|h|$, the bound in the case of Assumption~\ref{Ass-Sigma-Bounded} follows immediately. 

In the case of Assumption~\ref{Ass-Sigma-Polynomial}, the first bound follows from the fact that $\sum_{u \in N_{h}} c(u)$ is just the number of edges of $h$ which is $|h|-1$.  
\end{proof}
We can alternatively formulate the above lemma by saying that, for any $f \in\forest{N-1}$ and $\mu \in [d]$,  we have, uniformly over $y \in \mathbb{R}^{\dimY}$ and $0 \le p \le N$, 
\begin{equation*}
|D^{p}\Upsilon_{\mu}[f](y)|
\lesssim
\begin{cases}
C_{\sigma}^{|f|+1} 
&  
\text{ under the Assumption~\ref{Ass-Sigma-Bounded},}\\
C_{\sigma}^{|f|+1} \jbrac{y}^{(\gamma -1) (|f| +1) +1 - p}
&
\text{ under Assumption~\ref{Ass-Sigma-Polynomial}.}
\end{cases}
\end{equation*}
We are now ready to pass to the proofs of Theorems~\ref{thm:main} and  Theorems~\ref{thm:main2}.  
The following ``interior regularity estimate'' which gives a control of the $C^\alpha$ norm of $Z$ and $Y$ in terms of its lower regularity $L^\infty$ norm is 
key to both arguments.
To keep the length of some formulas within reason we use the following shorthand notation combining some of the quantities appearing on the right hand side of \eqref{e:wish}
\begin{equation}\label{e:DefUf}
\begin{split}
U_{\mu}(f, \Y,I ) &\eqdef  
\sup \big\{
 \big| D \Upsilon_{\mu}[f ] ( 
Y_{s} + a ) \big|: 
s,t \in I,  |a| \le E_{s,t}^{\Y} + |Y_{t}-Y_{s}| \big\}
   \; , \\
U_{\mu}(f,\bar{f}, \Y,I) &\eqdef   
\sup \bigg\{
\big|
D^{ \num \bar{f}} \Upsilon_{\mu}[f](Y_{s} + z) \prod_{h \in \bar{f}} \Upsilon[h](Y_{s}) 
	\big|: 
\begin{array}{c}
s, t \in I, \\
|z| \le E_{s,t}^{\Y} 
\end{array}
\bigg\} \; . 
\end{split}
\end{equation}

Below, for any $j$, we denote by $\CF_{= j}$ the set $\{ f \in \CF: |f| = j\}$. 

\begin{lemme}[Interior regularity]
\label{interior_regularity}
Let $\Y$ solve \eqref{RDE3} on $ [0,1]$  in the sense of Definition~\ref{def:solution}  and let $\Z$  be given by \eqref{eq:integral as a path}. Let $I \subseteq [0,1]$ be a closed interval of length $L$
and assume that $L$ is small enough to ensure that for all forests $f \in\forest{N-1}$ and all $\mu \in [\dimX]$ 
\begin{align}
\label{e:step2-condition}
L^{ (|f|+1) \alpha} \left[\X:[f]_\mu \right]  	U_{\mu}(f, \Y,I ) 
  \leq \epsilon \;,
\end{align}
for a suitably small  $\epsilon = \epsilon(\alpha,d,k)\leq 1$. 
 Then we have the estimates 
\begin{align}
\notag
L^{N\alpha}[\Z \colon \ind]_I  
&\lesssim
	\sum_{f \in\forest{N-1}  }\sum_{\mu\in[\dimX]}
		[\X:[f]_\mu]  
			\\
			\notag
		& \quad   \cdot \Bigg \{
		 	 L^{ (|f| +1) \alpha +1 }   
			U_{\mu}(f, \Y,I )
			 \| Y \|_I^m		
			+
			\max_{\bar{f}  \in \partial \forest{N- |f| -1}  }  
 				L^{ (|\bar{f} | +|f| +1 ) \alpha  } 
 				U_{\mu}(f,\bar{f}, \Y,I)
				[\X : \bar{f} ]
		 \Bigg\} 	\\		
\label{e:step3-final1bis}
&
+
\sum_{f \in \CF_{=N-1}}
\sum_{\mu \in [\dimX]}
L^{N\alpha}
[\X:[f]_\mu]   \|  \Upsilon_{\mu}[f](Y_{\bullet})   \|_I \,.
\end{align}
\end{lemme}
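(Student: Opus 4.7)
The main identity driving the proof is
\begin{equation*}
R^{\Z,\ind}_{s,t} = \sum_{\mu} \Big( \int_s^t \sigma_\mu(\Y)_r\, \mathrm{d}\X^{(\mu)}_r - \Xi^{\sigma_\mu(\Y),\mu}_{s,t} \Big) + \sum_{\mu} \sum_{f \in \CF_{=N-1}} \bUpsilon_\mu[f](Y_s)\, \scal{\X_{s,t}}{[f]_\mu}\;,
\end{equation*}
which I would derive as follows. First, \eqref{eq:solution} gives $\Y_s - \Z_s \in \R \cdot \ind$, hence $\delta \Z_s = \delta \Y_s$. By coherence, $\delta \Z_s = \sum_{h \in \rtree{N-1}} \bUpsilon[h](Y_s)\,h$; using the identification $\{[f]_\mu : f \in \forest{N-2},\ \mu \in [\dimX]\} = \rtree{N-1}$, this yields $\scal{\X_{s,t}}{\delta \Z_s} = \sum_{\mu,\, f \in \forest{N-2}} \bUpsilon_\mu[f](Y_s)\, \scal{\X_{s,t}}{[f]_\mu}$. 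On the other hand, a direct computation from \eqref{eq:non-linearity_of_tree_path} combined with coherence shows that $\scal{f}{\sigma_\mu(\Y)_s} = \bUpsilon_\mu[f](Y_s)$ for every $f \in \forest{N-1}$ (including $f \in \CF_{=N-1}$), so $\Xi^{\sigma_\mu(\Y),\mu}_{s,t} = \sum_{f \in \forest{N-1}} \bUpsilon_\mu[f](Y_s) \scal{\X_{s,t}}{[f]_\mu}$. Subtracting, the order-$(N-1)$ forests do not cancel and produce the ``overshoot'' sum on the right above, which is immediately bounded by the last term of \eqref{e:step3-final1bis} via $|\scal{\X_{s,t}}{[f]_\mu}| \le [\X:[f]_\mu]\, L^{N\alpha}$ (since $|[f]_\mu| = N$) and $1/S(f) \le 1$.

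For the first sum on the right, I apply the Sewing Lemma (Proposition~\ref{prop:integral_bound}) to get the bound $C|t-s|^{(N+1)\alpha} \sum_{f,\mu} [\X:[f]_\mu]\, [\sigma_\mu(\Y):f]_I$, and I control each seminorm $[\sigma_\mu(\Y):f]_I$ via Corollary~\ref{cor:one_controls_others}. The key point is that $\X_{s,t}$ is a character, so $|\scal{\X_{s,t}}{\bar f}| = \prod_{h \in \bar f} |\scal{\X_{s,t}}{h}| \le [\X:\bar f]\,|t-s|^{|\bar f|\alpha}$, which turns the second term of \eqref{e:wish} into $U_\mu(f,\bar f,\Y,I)\,[\X:\bar f]\,|t-s|^{|\bar f|\alpha}$. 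Dividing by $|t-s|^{(N-|f|)\alpha}$ and taking the sup over $s,t \in I$ yields
\begin{equation*}
[\sigma_\mu(\Y):f]_I \lesssim U_\mu(f,\Y,I)\, L^{|f|\alpha}\, [\Y:\ind]_I + \max_{\bar f \in \partial \forest{N-|f|-1}} U_\mu(f,\bar f,\Y,I)\,[\X:\bar f]\,L^{(|\bar f|+|f|-N)\alpha}\;.
\end{equation*}
The $L$-exponent of the second term is non-negative because \eqref{def-partialA} forces $|\bar f| \ge N - |f|$ for $\bar f \in \partial \forest{N-|f|-1}$. After multiplication by the $L^{(N+1)\alpha}$ factor from sewing, the $\bar f$-term matches precisely the second term inside the sum of \eqref{e:step3-final1bis}.

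It remains to handle the $[\Y:\ind]_I$ piece. Using \eqref{eq:remainder_relation} together with the trivial bound $\bigl|\int_s^t |Y_r|^{m-1}Y_r\,\mathrm{d}r\bigr| \le L\,\|Y\|_I^m$, I obtain $L^{N\alpha}[\Y:\ind]_I \le L^{N\alpha}[\Z:\ind]_I + L\,\|Y\|_I^m$. Multiplying by $[\X:[f]_\mu]\, U_\mu(f,\Y,I)\, L^{(|f|+1)\alpha}$, the $L\|Y\|_I^m$ piece yields exactly the first term inside the sum of \eqref{e:step3-final1bis}, while the $L^{N\alpha}[\Z:\ind]_I$ piece is multiplied by a prefactor that is at most $\epsilon$ by the smallness assumption \eqref{e:step2-condition}. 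Summing over the finite index set $\forest{N-1} \times [\dimX]$, whose cardinality depends only on $\alpha,d,k$, and choosing $\epsilon$ small enough in terms of that cardinality and the implicit constant, I absorb this last contribution into the left-hand side. The main obstacles are the careful book-keeping of the various powers of $L$ so that the two contributions from Corollary~\ref{cor:one_controls_others} line up with the two terms in the sum on the right-hand side of \eqref{e:step3-final1bis}, and the conceptual observation that the ``overshoot'' term indexed by $\CF_{=N-1}$ cannot be absorbed: the associated trees $[f]_\mu$ have order exactly $N$, one level beyond the controlled expansion of $\Y$, so they are absent from $\delta \Z$ and must appear explicitly in the final estimate.
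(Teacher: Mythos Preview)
Your proposal is correct and follows essentially the same route as the paper's proof: you decompose $R^{\Z,\ind}_{s,t}$ into the sewing error plus the overshoot sum over $\CF_{=N-1}$, bound each $[\sigma_\mu(\Y):f]_I$ via Corollary~\ref{cor:one_controls_others} exactly as in the paper's intermediate estimate \eqref{e:Step2-estimate1}, convert $[\Y:\ind]_I$ to $[\Z:\ind]_I$ using \eqref{eq:remainder_relation}, and absorb with \eqref{e:step2-condition}. The book-keeping of $L$-powers and the observation about why the $\CF_{=N-1}$ terms cannot be absorbed are also the same as in the paper.
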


\begin{proof}

\noindent{\bf Part 1: Application of Sewing Lemma.} 
 For $0 \leq s \leq t \leq 1$, the estimate \eqref{eq:roughintegral_bound} implies that
\begin{equation*} %\label{eq:Gubinelli_bound}
\Big|Z_t-Z_s
	- \Xi^{\sigma(\Y)}_{s,t}
\Big| \lesssim 
 |t-s|^{(N+1)\alpha}
\sum_{f  \in\forest{N-1}}\sum_{\mu \in [\dimX] }
\left[\X:[f]_\mu \right]
\left[\sigma_\mu(\Y) : f\right]_{[s,t]}\;,
\end{equation*}
where 
\[
\Xi^{\sigma(\Y)}_{s,t} 
\eqdef
\sum_{\mu \in [\dimX]}
\Xi^{\sigma_{\mu}(\Y)}_{s,t} 
=
\sum_{\mu \in [\dimX]}
\sum_{f \in\forest{N-1}}
\scal{\X_{s,t}}{[f]_{\mu}} 
\scal{f}{\sigma_{\mu}(\Y)_{s}}
 \in \R^{\dimY}\;.
\]
By coherence  of  $\Y$,  i.e. by  \eqref{eq:lhs_to_rhs}, we get, 
\begin{align*}
\Xi^{\sigma(\Y)}_{s,t}
&=
\sum_{h \in \rtree{N-1}}
\scal{\X_{s,t}}{h}\scal{h}{\Y_s}
+
\sum_{ 
f \in \CF_{=N-1}}
\sum_{\mu \in [\dimX]}
\scal{\X_{s,t}}{[f]_{\mu}}\scal{f}{\sigma_{\mu}(\Y)_s} \;.
%\label{e:Xi-coherence}
\end{align*}
Recalling that $\delta \Y = \delta \Z$ this leads to  
\begin{align} \notag
| R^{\Z,\ind}_{s,t}|  
&= 
\Big|Z_t-Z_s
	- \scal{\X_{s,t}}{\delta \Y_{s}}
\Big|\\ \notag
{}& \le \Big|
Z_t-Z_s
	-  \Xi^{\sigma(\Y)}_{s,t}
\Big| +
\Big|
 \Xi^{\sigma(\Y)}_{s,t} - 
 \scal{\X_{s,t}}{\delta \Y_{s}}
\Big| \\ \notag
& \lesssim 
  |t-s|^{(N+1)\alpha}
\sum_{f \in\forest{N-1}} \sum_{\mu \in [\dimX]}
\left[\X:[f]_\mu\right]
\big[ \sigma_\mu(\Y) : f\big]_{[s,t]}
\\
\notag
&
+
|t-s|^{N \alpha}
\sum_{ \CF_{ = N-1 }}
\sum_{\mu \in [\dimX]}
[\X:[f]_\mu]  \cdot  | \scal{f}{\sigma_{\mu}(\Y)_s} | \;.
\end{align} 
Dividing by $|t-s|^{N\alpha}$ and taking the supremum over all $s<t$ in $I$ we arrive at 
\begin{align} \notag
[\Z \colon \ind]_I   & \lesssim   
L^\alpha
\sum_{f \in\forest{N-1} }\sum_{\mu \in [\dimX]}
\left[\X:[f]_\mu \right] \big[ \sigma_\mu(\Y) : f\big]_{I}
\\
\label{eq:22}
&
+
\sum_{f \in \CF_{=N-1}}
\sum_{\mu \in [\dimX]}
[\X:[f]_\mu] \cdot    \|  \Upsilon_{\mu}[f](Y_{\bullet})   \|_I \;.
\end{align}

\noindent{\bf Part 2: Application of Corollary~\ref{cor:one_controls_others}.} 
We use the estimate \eqref{e:wish} in  Corollary~\ref{cor:one_controls_others}  in the form
\begin{equation}
\label{e:Step2-estimate1}
[  \sigma_\mu (\Y) : f ]_I
\lesssim
	L^{ |f| \alpha}   
U_{\mu}(f, \Y,I )
	[\Y : \ind ]_I  
+
\max_{
\bar{f}  \in  \partial \forest{N- |f| -1}}
 L^{ (|\bar{f} | +|f| -N) \alpha  }
U_{\mu}(f,\bar{f}, \Y,I)
[\X : \bar{f} ] \; .
\end{equation}
Note that $| \bar{f} | \geq N - |f|$, so that the exponent $(|\bar{f} | +|f| -N) \alpha $ in the second term on the RHS of \eqref{e:Step2-estimate1} is always non-negative.

We plug this estimate into the RHS of \eqref{eq:22} resulting in 
\begin{align}
 \notag
[\Z \colon \ind]_I  &  \lesssim  
	 L^\alpha
		\sum_{ f \in\forest{N-1}  }\sum_{\mu \in [\dimX]}
		[\X: [f]_\mu]  
\\ \notag	
		& \qquad   \cdot \Bigg \{
		 	 L^{ |f| \alpha}   
			U_{\mu}(f, \Y,I ) 
			[\Y : \ind ]_I  		
			+
			\max_{\bar{f}  \in  \partial \forest{N- |f| -1} }  
 				L^{ (|\bar{f} | +|f| -N) \alpha  } 
 				U_{\mu}(f,\bar{f}, \Y,I) 
				[\X : \bar{f} ]
		 \Bigg\} 
\\
\label{eq:22bis}
&
+
\sum_{ \CF_{=N-1} }
\sum_{\mu \in [\dimX]}
[\X:[f]_\mu]  \cdot  \|  \Upsilon_{\mu}[f](Y_{\bullet})   \|_I \;.
\end{align} 

The key observation at this point is that the terms $	[\Y : \ind ]_I  $ on the RHS can be replaced by $[\Z \colon \ind]_I$. Indeed, 
equation \eqref{eq:remainder_relation} which relates the tree paths $\Y$ and $\Z$ immediately implies  the bound 
 \begin{equation*}%\label{conversion-Y-Z}
 [\Y  :\ind]_I \leq   [ \Z :\ind]_I  + L^{1-N\alpha} \| Y \|_I^m \;.
 \end{equation*}
Plugging this into the RHS of \eqref{eq:22bis}, the resulting terms involving    $[ \Z :\ind]_I$  can be absorbed into the LHS, provided their pre-factors are small. 
More precisely,  
\begin{align}
 \notag
[\Z \colon \ind]_I   & \lesssim  
	 L^\alpha
		\sum_{ f \in\forest{N-1}  }
		\sum_{\mu \in [\dimX]}
		[\X:[f]_\mu ]  
\\ \notag	
		& \qquad   \cdot \Bigg \{
		 	 L^{ |f| \alpha}   
			U_{\mu}(f, \Y,I ) 
			L^{1-N\alpha} \| Y \|_I^m		
			+
			\max_{\bar{f}  \in  \partial \forest{N- |f| -1} }  
 				L^{ (|\bar{f} | +|f| -N) \alpha  } 
 				U_{\mu}(f,\bar{f}, \Y,I) 
				[\X : \bar{f} ]
		 \Bigg\} 
\\
\notag
%\label{e:step2-final}
&
+
\sum_{|f| \in  \CF_{=N-1} }
\sum_{\mu \in [\dimX]}
[\X:[f]_\mu] \cdot   \|  \Upsilon_{\mu}[f](Y_{\bullet})   \|_I \;,
\end{align} 
provided for all forests $f \in\forest{N-1}$ and all $\mu \in [\dimX]$ 
\begin{align*}
L^{ (|f|+1) \alpha} \left[\X:[f]_\mu \right]  	U_{\mu}(f, \Y,I ) 
  \leq \epsilon \;,
\end{align*}
for a suitably small  $\epsilon = \epsilon(\alpha,d,k)\leq 1$. This  is the desired estimate \eqref{e:step3-final1}. 
\end{proof}
It turns out, that for the proof of Theorems~\ref{thm:main} and~\ref{thm:main2} only the following truncated version of the estimate \eqref{e:step3-final1bis} is required  
\begin{corrolaire}
\label{cor:interior_regularity}
Let $\Y$ solve \eqref{RDE3} on $ [0,1]$  in the sense of Definition~\ref{def:solution}  and let $\Z$  be given by \eqref{eq:integral as a path}. Let $I \subseteq [0,1]$ be a closed interval of length $L$ satisfying \eqref{e:step2-condition}.
 Then we have the estimates 
\begin{align}
\notag
L^{\alpha}[Z]_{\alpha,I}  
&\lesssim
	\sum_{f \in\forest{N-1}  }\sum_{\mu\in[\dimX]}
		[\X:[f]_\mu]  
			\\
			\notag
		& \quad   \cdot \Bigg \{
		 	 L^{ (|f| +1) \alpha +1 }   
			U_{\mu}(f, \Y,I )
			 \| Y \|_I^m		
			+
			\max_{\bar{f}  \in  \partial \forest{N- |f| -1}  }  
 				L^{ (|\bar{f} | +|f| +1 ) \alpha  } 
 				U_{\mu}(f,\bar{f}, \Y,I)
				[\X : \bar{f} ]
		 \Bigg\} 	\\		
\label{e:step3-final1}
&
+
\sum_{h \in \rtree{N}} 
\sum_{\mu \in [\dimX]}
L^{|h| \alpha}
[\X:h]   \|  \Upsilon[h](Y_{\bullet})   \|_I \,
\end{align}
and
\begin{align}
L^\alpha [Y]_{\alpha,I} \lesssim \text{ RHS of \eqref{e:step3-final1}} + L \| Y \|_I^m \;.
\label{e:step3-final2}
\end{align}

\end{corrolaire}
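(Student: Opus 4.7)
The strategy is to derive both estimates directly from Lemma~\ref{interior_regularity} via elementary manipulations that relate $[Z]_{\alpha,I}$ to $[\Z : \ind]_I$ and $[Y]_{\alpha,I}$ to $[Z]_{\alpha,I}$. The key point is that $[\Z : \ind]_I$ already controls the ``high-order'' remainder of $Z$, while the difference with the ordinary $\alpha$-H\"older seminorm is accounted for by lower-order tree contributions that fit naturally into the last sum of \eqref{e:step3-final1}.

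First, I would unpack the identity
\[
Z_t - Z_s = R^{\Z,\ind}_{s,t} + \scal{\X_{s,t}}{\delta \Z_s},
\]
and invoke coherence (as in Remark~\ref{rem:drift}) to rewrite $\scal{h}{\Z_s} = \scal{h}{\Y_s} = \bUpsilon[h](Y_s)$ for every $h \in \rtree{N-1}$. Taking moduli, dividing by $|t-s|^\alpha$, and using $|t-s| \le L \le 1$ to convert the exponents $|t-s|^{(N-1)\alpha}$ and $|t-s|^{(|h|-1)\alpha}$ into factors of $L^{(N-1)\alpha}$ and $L^{(|h|-1)\alpha}$ respectively, yields a bound of the form
\[
L^\alpha [Z]_{\alpha,I} \lesssim L^{N\alpha}[\Z : \ind]_I + \sum_{h \in \rtree{N-1},\, |h|\ge 1} [\X : h]\, L^{|h|\alpha}\, \| \Upsilon[h](Y_\bullet) \|_I \;.
\]

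Second, I would plug the conclusion of Lemma~\ref{interior_regularity} into this inequality to control $L^{N\alpha}[\Z : \ind]_I$. The last sum in \eqref{e:step3-final1bis} runs over $f \in \CF_{=N-1}$ and $\mu \in [\dimX]$, and the correspondence $(f,\mu) \leftrightarrow [f]_\mu$ is a bijection with the set of trees $h \in \rtree{N}$ of order exactly $N$ with root decoration $\mu$ (with $\Upsilon_\mu[f] = \Upsilon[[f]_\mu]$). Merging that sum with the residual sum over $h \in \rtree{N-1}$, $|h|\ge 1$ produced above consolidates into a single sum over all $h \in \rtree{N}$ with $|h|\ge 1$, which is precisely the last sum in \eqref{e:step3-final1}. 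This delivers the first estimate.

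Third, for \eqref{e:step3-final2}, I would use the solution identity \eqref{eq:solution} to write
\[
Y_t - Y_s = -\int_s^t |Y_r|^{m-1}Y_r\, dr + (Z_t - Z_s),
\]
which immediately gives $|Y_t - Y_s| \le |t-s|\, \|Y\|_I^m + |Z_t - Z_s|$. Dividing by $|t-s|^\alpha$ and using $|t-s|\le L$ to replace $|t-s|^{1-\alpha}$ by $L^{1-\alpha}$ produces $L^\alpha[Y]_{\alpha,I} \lesssim L\, \|Y\|_I^m + L^\alpha [Z]_{\alpha,I}$, and combining with the first estimate yields \eqref{e:step3-final2}.

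I do not anticipate a substantial obstacle here: this is a bookkeeping statement whose content is entirely absorbed in the lemma above it. The only point that deserves care is the identification of trees of order $N$ with decorated root-level forests of order $N-1$, so that the residual ``low order'' contributions from the $[\Z : \ind]_I \to [Z]_{\alpha, I}$ conversion are genuinely covered by the same sum over $h \in \rtree{N}$ that appears in the corollary statement.
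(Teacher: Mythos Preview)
Your proposal is correct and follows essentially the same approach as the paper: decompose $Z_t - Z_s$ into the $\ind$-remainder plus the lower-order tree contributions (using coherence to identify the coefficients with $\Upsilon[h](Y_s)$), feed in the bound \eqref{e:step3-final1bis} from Lemma~\ref{interior_regularity}, and merge the $\CF_{=N-1}$ sum with the $\rtree{N-1}$ sum via the bijection $(f,\mu) \leftrightarrow [f]_\mu$ into a single sum over $\rtree{N}$; the $[Y]_{\alpha,I}$ bound then follows from the drift relation exactly as you say.
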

\begin{proof}
The estimate \eqref{e:step3-final1} follows from \eqref{e:step3-final1bis} combined with
\begin{align}
\notag
L^\alpha[Z]_{\alpha,I} 
&\le 
\sum_{h \in \rtree{N-1}} 
	L^{|h|\alpha} [\X:h] \|  \Y : h \|_I  
	+ L^{N\alpha}[\Z:\ind]_{I} \; ,
\end{align}
as well as the identification
\begin{align*}
\sum_{ \CF_{=N-1} }
\sum_{\mu \in [\dimX]} L^{N\alpha}
[\X:[f]_\mu]  \cdot  \|  \Upsilon_{\mu}[f](Y_{\bullet})   \|_I  = 
\sum_{h \in \CT_{=N} }  L^{|h| \alpha}
[\X:h]  \cdot  \|  \Upsilon[h](Y_{\bullet})   \|_I  \;.
\end{align*}
The estimate \eqref{e:step3-final2} is an immediate consequence of  the defining relationship $Y_t = y_0 - \int_0^t |Y_r|^{m-1} Y_r dr + Z_t$.
\end{proof}
\begin{proof}[Proof of Theorem~\ref{thm:main} (1) - The case of bounded coefficients.] 

\vspace{.1cm}
\noindent{ \bf Step 1: Simplification of interior regularity estimate.}
We now work under Assumption~\ref{Ass-Sigma-Bounded}, i.e. we assume  that $\sigma$ and its derivatives up to order $N$ are bounded.
Using the boundedness of coefficients, and estimate \eqref{upsilon-bound-sigma-bounded} in Lemma~\ref{l:Assumption-imply-Upsilon}, this  the estimate \eqref{e:step3-final1} simplifies to 
\begin{align}
\notag
L^\alpha[Z]_{\alpha,I} 
&\lesssim 
\sum_{h \in \rtree{N}} 
	L^{|h|\alpha} [\X:h]  
\\\notag
&+
\sum_{f \in\forest{N-1} }
\sum_{\mu \in [\dimX]}    
	 L^{ (|f| +1) \alpha +1 }
	 [\X:[f]_\mu]
	 \| Y \|_I^m		
\\
&+
	\sum_{f \in\forest{N-1} }\sum_{\mu\in[\dimX]}
			\max_{\bar{f}  \in  \partial \forest{N- |f| -1}  }    
 				L^{ (|\bar{f} | +|f| +1) \alpha  }
				[\X:[f]_\mu]   
 				[\X : \bar{f} ]
\label{e:step4-1}
\end{align}
provided $I$ is an interval of length $L$ satisfying
\begin{align}
\label{e:step4-condition1}
L^{ (|f|+1) \alpha} \left[\X:[f]_\mu\right]   
  \leq \epsilon \;,
\end{align}
for all forests $f \in\forest{N-1}$ and for a potentially smaller value of $\epsilon = \epsilon(\alpha,d,k, C_\sigma)\leq 1$. 
In order to simplify these expressions further, we make additional assumptions: first,  we fix a $t^\star \in [0,1]$
and for the remainder of this step we assume that the interval $I$ satisfies $I \subseteq  [t^\star, 1)$.
Next, we work under the assumption that for all trees $h \in \rtree{N}$  we have 
\begin{equation}\label{eq:simplifying_assump}
[\X : h]
\le \epsilon  \|Y\|_{[t^\star, 1]}^{ |h| \delta \alpha }
\;,
\end{equation}
for   $\delta =m$.
 Under \eqref{eq:simplifying_assump} the condition \eqref{e:step4-condition1} is guaranteed as soon as  
\begin{align}
\label{e:step4-condition2}
L \leq    \| Y \|_{[t^\star,1]}^{-\delta} \;.
\end{align}
 Under these assumptions the estimate \eqref{e:step4-1} 
simplifies further to 
\begin{align}
\label{e:step4-final}
L^\alpha[Z]_{\alpha,I} 
&\lesssim 
\epsilon (1+  L   \|Y \|_{[t^\star,1]}^m +1 +  1)  \lesssim \epsilon+ \epsilon L   \|Y \|_{[t^\star,1]}^m\;.
\end{align}
The estimate \eqref{e:step3-final2}  turns into
\begin{align}
\label{e:step4-final2}
L^\alpha[Y]_{\alpha,I}  
&\lesssim 
\epsilon 	+	 L   \|Y \|_{[t^\star,1]}^m  \;.
\end{align}

\noindent { \bf Step 2: Application of Lemma~\ref{lemma:MW}}
We continue working under the boundedness Assumption~\ref{Ass-Sigma-Bounded} on the coefficients,  and assuming the bound 
\eqref{eq:simplifying_assump} on the trees $[\X : h]$. 
We apply Lemma~\ref{lemma:MW} on the interval $[t_\star,1]$  and for  
\begin{align*}%\label{e-assumption-L0}
L =   \| Y \|_{[t^\star,1]}^{-\delta} = \| Y \|_{[t^\star,1]}^{-m} \;. 
\end{align*}  
We assume throughout that all times involved are contained in $[0,1]$ which implies in particular that $L \in (0,1)$ so that \eqref{e:step4-condition2} implies 
\begin{equation} \label{Y-useful-assumption}
\| Y \|_{[t^\star,1]} \geq 1.
\end{equation}
Under these assumptions, combining Lemma~\ref{lemma:MW} with \eqref{e:step4-final}, \eqref{e:step4-final2} we  obtain for $t > t_\star +L$ 
\begin{align}
\notag
|Y(t)|  
  \lesssim 
   \max \Big\{ 
	 & (t-t^\star -L)^{-\frac{1}{m-1}}; 
	 \Big(\epsilon L^{-1} + \epsilon   \|Y \|_{[t^\star,1]}^m \Big)^{\frac{1}{m}};  \\
\notag
& \qquad 	
	\Big(     \|Y \|_{[t^\star,1]}^{m-1} \Big( \epsilon	+	 L   \|Y \|_{[t^\star,1]}^m \Big)   \Big)^{\frac{1}{m}} ; 
	  \Big(  \epsilon + L   \|Y \|_{[t^\star,1]}^m 
  \Big)    \Big\}\\
  \lesssim 
   \max \Big\{ 
	 & (t-t^\star -L)^{-\frac{1}{m-1}}; 
	  \epsilon^{\frac{1}{m}}   \|Y \|_{[t^\star,1]} ;  
	    \|Y \|_{[t^\star,1]}^{\frac{m-1}{m}}  ; 1 
    \Big\} \;.
    \notag
%\label{Step5-2} 
\end{align}
We bound  using Young's inequality

\begin{align*}
 \|Y \|_{[t^\star,1]}^{\frac{m-1}{m}}   \leq  \epsilon \|Y \|_{[t^\star,1]} +  C(\epsilon, m)\;  ,
\end{align*}
and then choose first $\epsilon$ small and  absorb all constants into 
the term  $ (t-t^\star -L)^{-\frac{1}{m-1}} \geq 1$ and conclude that if  $t$ is large enough to ensure that
\begin{align}\label{t-conditionA} 
 (t-t^\star - L)^{-\frac{1}{m-1}} \leq \bar{\epsilon} \| Y \|_{[t^\star,1]},
\end{align}
for $\bar{\epsilon} =  \bar{\epsilon}(\alpha,d,k, C_\sigma)$ small enough, we get  
\begin{align}\label{thereweare}
|Y(t) | \leq \frac12  \| Y \|_{[t^\star,1]}.
\end{align}
We observe that \eqref{t-conditionA} is guaranteed as soon as 
\begin{align*}%\label{t-condition1} 
 t-t^\star \geq  \bar{\epsilon}^{-(m-1)}  \| Y \|_{[t^\star,1]}^{-(m-1)}  + \| Y \|_{[t^\star,1]}^{-m}   .
\end{align*}
which, taking into account \eqref{Y-useful-assumption} follows from the simpler condition
\begin{align}\label{t-condition} 
 t-t^\star \geq  ( \bar{\epsilon}^{-(m-1)} +1)  \| Y \|_{[t^\star,1]}^{-(m-1)}     .
\end{align}

\noindent { \bf Step 3: Coming down from infinity for bounded coefficients.}
We continue working under the boundedness Assumption~\ref{Ass-Sigma-Bounded} and aim to establish 
the estimate \eqref{e:thm-bound}.
We define a finite sequence $0 = t_0 < t_1<t_2 < \ldots < t_N =1$ by setting
\begin{align}\label{e:def-t-increment}
t_{n+1} - t_n = A \| Y \|_{[t_n,1]}^{-(m-1)},
\end{align}
where $A =  \bar{\epsilon}^{-(m-1)} + 1$ (see  \eqref{t-condition}), 
as long as the resulting $t_{n+1}$ satisfies $t_{n+1} < 1$. As soon as $t_{n+1}$ defined by this rule is $\geq 1$ we set 
$t_{n+1} = t_N =1$ and terminate the sequence. Note that the sequence $t_{n+1} - t_n$ is increasing by definition 
(at least before passing $1$),  so that this algorithm must terminate after a finite number of steps.

The time $t$ is contained in precisely one interval $(t_n,t_{n+1}]$ and precisely one of the following must hold:
\begin{enumerate}
\item either there exists a $k \leq n$ and  a tree $h$ such that 
\begin{equation*}%\label{eq:simplifying_assump-neg}
\epsilon \|Y\|_{[t_k, 1]}^{|h| m   \alpha} <  [\X : h] \;,
 \end{equation*} 
 \item or for all $k \leq n$ all trees $h$ satisfy condition \eqref{eq:simplifying_assump} for $\delta = m$.
\end{enumerate}
In case (1), the  desired estimate \eqref{e:thm-bound} follows immediately from 
\[
|Y(t)| \leq   \|Y\|_{[t_k, 1]} \leq  \frac{1}{\epsilon}   [\X : h]^{\frac{1}{|h| m \alpha}}. 
\]
In case (2)  invoking \eqref{thereweare}  repeatedly for $t_* = t_k$ yields 
\begin{equation}\label{e:useful-estimate}
\| Y \|_{[t_n,1] } \leq 2^{k-n} \| Y \|_{[t_k,1]} \qquad \text{ for } k =0, 1, 2 , \ldots n. 
\end{equation}
We split (2) further into the following three cases:
\begin{enumerate}
\item[(2a)] $n +1 < N $. 
 \item[(2b)] $n+1=N$ and $t_n \leq \frac12$.
 \item[(2c)] $n+1=N$ and $t_n > \frac12$.
\end{enumerate}
In case (2a) we write using \eqref{e:useful-estimate}
\begin{align*}
t_{n+1} &= \sum_{k=0}^n   t_{k+1} - t_k = A  \sum_{k=0}^n   \| Y \|_{[t_k,1]}^{-(m-1)}  \leq  A  \| Y \|_{[t_n,1]}^{-(m-1)}  \sum_{k=0}^n   2^{- (m-1) (n-k)} \\
&\lesssim  \| Y \|_{[t_n,1]}^{-(m-1)} ,
\end{align*}
which implies 
\begin{equation*}
|Y(t)| \leq  \| Y \|_{[t_n,1]} \lesssim t_{n+1}^{-\frac{1}{m-1}} \leq t^{-\frac{1}{m-1}},
\end{equation*}
as desired. 

For (2b) to hold by definition \eqref{e:def-t-increment} we have
\[
A \| Y \|_{[t_n,1]}^{-(m-1)} \geq \frac12 ,
\]
and in particular 
\[
|Y(t)| \leq \| Y \|_{[t_n,1] } \leq  \Big(\frac{1}{2A}\Big)^{-\frac{1}{m-1}},
\]
which implies \eqref{e:thm-bound}. 

Finally, in case (2c) we can apply case (2a) to $t_n$ to get
\[
|Y(t)| \leq \|Y\|_{[t_n,1]} \leq t_n^{-\frac{1}{m-1}}  \leq \Big( \frac{1}{2} \Big)^{-\frac{1}{m-1}}, 
\]
completing the proof of \eqref{e:thm-bound}.
\end{proof}

\begin{proof}[Proof of Theorem~\ref{thm:main} (2) - The case of polynomial coefficients.] 
The proof follows the same general lines as the proof in the case of bounded coefficients and we only explain the differences.
Combining, their definition \eqref{e:DefUf} with the estimate on $\Upsilon$ provided by Lemma~\ref{l:Assumption-imply-Upsilon}, 
yields  
\begin{equation}\label{e-Uestimate} 
\begin{split}
U_{\mu}(f, \Y,I )& \lesssim \sup \big\{
\jbrac{ Y_s + a}^{  (\gamma -1)  (|f|+1) } : 
s,t \in I, |a| \le E^{\Y}_{s,t} + |Y_{t}-Y_{s}| \big\}  \; ,\\
U_{\mu}(f,\bar{f}, \Y,I) &\lesssim   
\sup
\bigg\{
\jbrac{Y_{s} + z}^
 { (\gamma -1) (|f|+1)  +1 - \num \bar{f} }
   \jbrac{Y_{s}}^
   {(\gamma-1) |\bar{f} |  + \num \bar{f} } : 
\begin{array}{c}
s, t \in I, \\
|z| \le E^{\Y}_{s,t}
\end{array}
\bigg\} \; . 
\end{split}
\end{equation}
We follow the same approach as Step 1 of the proof in the case of bounded coefficients, fixing $t^\star \in [0,1)$, but here  
the condition   \eqref{e:step4-condition2} is  replaced by
 \begin{align}
\label{e:step4-condition2-step7}
L \leq   \epsilon_2   \jbrac{Y}_{[t^\star,1]}^{-(m-1)}
\end{align} 
where we have introduced an auxiliary parameter $\epsilon_2 \leq 1$ that will ultimately be chosen small but still large relative to $\epsilon$ and we have changed values of exponents from $\delta = m$ to $\delta = m-1$. 
Instead of \eqref{eq:simplifying_assump} we assume that for $h \in \rtree{N}$ 
\begin{equation}\label{eq:simplifying_assump-Step7}
[\X : h]
\le     \epsilon_1 \|Y\|_{[t^\star, 1]} ^{ |h|  (\alpha (m-1)    - (\gamma -1)) } \;.
\end{equation}
Note that the exponent $|h|  (\alpha (m-1)    - (\gamma -1))$ appearing on the right hand side is positive if and only if the upper bound $\gamma < (m-1) \alpha +1$ in Assumption~\ref{gamma-condition-m} holds. 
Under \eqref{e:step4-condition2-step7} and \eqref{eq:simplifying_assump-Step7} we have 
\begin{align}
\notag
 E^{\Y}_{s,t} = &\sum_{h \in \rtree{N-1}} |\scal{\X_{s,t}}{h} \Upsilon[h](Y_{s})| \\
\notag
& \lesssim \sum_{h \in \rtree{N-1}}   \Big( \epsilon_2  \jbrac{ Y}_{[t^\star,1]}^{-(m-1)} \Big)^{\alpha |h|} \epsilon_1 \|Y\|_{[t^\star, 1]} ^{ |h|  (\alpha (m-1)    - (\gamma -1)) } \jbrac{Y}_{[t^\star, 1]} ^{(\gamma -1) |h| +1} \\
\label{e:error-in-evaluation-bound}
& \lesssim \epsilon_2^{\alpha} \epsilon_1 \jbrac{Y}_{[t^\star, 1]} \;.
\end{align}
Note that in the first inequality, we have made use of \eqref{upsilon-bound-sigma-poly} and the assumption $\gamma \geq 1$ in \eqref{gamma-condition-m} which ensures that the exponent $(\gamma -1) |h| +1$ in the last term is non-negative
and permits to use the estimate
\begin{align}\label{an-example}
\Big| \Upsilon[h](Y_{s}) \Big| \lesssim  \jbrac{Y_s}^{(\gamma - 1) |h| +1} \leq  \jbrac{Y}_{[t^\star, 1]} ^{(\gamma -1) |h| +1} \;.
\end{align}
We now enforce that 
\begin{align}\label{choiceetac}
\epsilon_1 \epsilon_2^\alpha  \leq \frac{ \epsilon}{C} \;,
\end{align}
Using this,  \eqref{e:error-in-evaluation-bound}, and once more the fact that $(\gamma - 1) \geq 1$ the estimate \eqref{e-Uestimate} simplifies to 
\begin{align}
\notag
U_{\mu}(f, \Y,I )& \lesssim 
 \jbrac{ Y}_{[t^\star, 1]}^{  (\gamma -1)  (|f|+1) }  ,\\
U_{\mu}(f,\bar{f}, \Y,I) &\lesssim   
\jbrac{Y}_{[t^\star, 1]}^
 { (\gamma -1) (|f|+\bar{f} +1)  +1 }
\label{e-Uestimate-simplified}   \;.
\end{align}
Note that these estimates in conjunction with \eqref{choiceetac}, \eqref{e:step4-condition2-step7}, and Assumption \eqref{eq:simplifying_assump-Step7} imply \eqref{e:step2-condition} 
and in this case \eqref{e:step3-final1} and \eqref{e:step3-final2} turn into 
\begin{align}
\notag
L^\alpha[Z]_{\alpha,I} 
 &\lesssim	
	\sum_{f \in\forest{N-1}  }\sum_{\mu\in[\dimX]}
		[\X:[f]_\mu]  
			\\
\notag
	& \quad   \cdot \Bigg \{
		 	 L^{ (|f| +1) \alpha +1 }   
			\jbrac{ Y }_{[t^\star, 1]}^{ (|f|+1) (\gamma -1) }
			 \| Y \|_I^m		
			+
			\max_{\bar{f}  \in \partial \forest{N- |f| -1}  }  
 				L^{ (|\bar{f} | +|f| +1 ) \alpha  } 
 				\jbrac{Y}_{[t^\star, 1]}^{(|\bar{f}| + |f|+1) (\gamma -1) +1}
				[\X : \bar{f} ]
		 \Bigg\} 	\\		
\notag
&
+ \sum_{h \in \rtree{N}} 
	L^{|h|\alpha} [\X:h] \jbrac{Y}_{[t^\star, 1]}^{|h| (\gamma -1) +1}   \\
%\notag
%L^\alpha[Z]_{\alpha,I} 
& \lesssim 
\epsilon ( L   \jbrac{Y}_{[t^\star,1]}^m + \jbrac{Y}_{[t^\star,1]}  + \jbrac{Y}_{[t^\star,1]}  ) 
\label{e:step7-Zbound}
 \lesssim  \epsilon \jbrac{Y}_{[t^\star,1]} \;,
\end{align}
and 
\begin{align*}
%\label{e:step7-Ybound}
L^\alpha[Y]_{\alpha,I}  
&\lesssim 
\epsilon \jbrac{Y}_{[t^\star,1]}   	+	 L   \|Y \|_{[t^\star,1]}^m  \lesssim  \epsilon_2  \jbrac{Y}_{[t^\star,1]}^m \;.
\end{align*}
As above in Step 2 we next apply Lemma~\ref{lemma:MW} on the interval $[t_\star,1]$ yielding  
\begin{align}
\notag
\jbrac{Y(t)}  
  \lesssim 
   \max \Big\{ 
	 & (t-t^\star -L)^{-\frac{1}{m-1}}; 
	  \Big( \frac{\epsilon}{\epsilon_2} \Big)^{\frac{1}{m}}   \|Y \|_{[t^\star,1]};  \\
%\notag
& \qquad 	
	\Big(     \|Y \|_{[t^\star,1]}^{m-1} \Big( \epsilon_2  \jbrac{Y}_{[t^\star,1]} \Big)   \Big)^{\frac{1}{m}} ; 
	  \epsilon_2  \jbrac{ Y}_{[t^\star,1]}  
     \Big\}.
     \notag
%\label{Step7-1} 
\end{align}

Then choosing first $\epsilon_2$ small to control the pre-factors in the second line  and then also  $\epsilon$ small to also make the pre-factors in the first line small (this amounts to choosing $\epsilon_1$ in \eqref{eq:simplifying_assump-Step7} small) 
we obtain
\begin{align}
\jbrac{Y(t)}  
  \leq  
   \max \Big\{ 
	 & C (t-t^\star -L)^{-\frac{1}{m-1}};  \frac{1}{2} \jbrac{Y}_{[t^\star,1]}
	        \Big\}.
	        \notag
%\label{Step7-2}
\end{align} 
The proof  concludes as in the case of bounded coefficients, Step 3  above, where in \eqref{e:def-t-increment} and in what follows, the role of $\| Y \|_{[t_k,1]}$ is played by $\jbrac{Y}_{[t_{k},1]}$. 
\end{proof}

\begin{proof}
[Proof of Theorem~\ref{thm:main2}]
We first show \eqref{e:small-time-bound1}  under Assumption~\ref{Ass-Sigma-Bounded} (i.e. for bounded coefficients). We first define the auxiliary time  
\begin{align}\label{def-T0}
T_0 = \inf \big\{t \in [0,1] \colon \jbrac{y(t)} \geq 3 \jbrac{y_0}  \big \}    \;,
\end{align}
with the convention that $T_0 = 1$ if the set is empty. Continuity of $|y(t)|$ in $t$ implies that if $T_0 <1$ we have 
\begin{align}\label{observationTbar}
\jbrac{y(T_0)} = 3 \jbrac{ y_0} \;.
\end{align}
Furthermore, let $T_1$ and $T_2$ be as in \eqref{e:def-T1T2-1}, where $\epsilon_2 $ is assumed to be small enough to play the role of $\epsilon$ in the absorption condition
  \eqref{e:step4-condition1} above and $\epsilon_1$ is fixed below, and set 
  \[
  T^{\star} = \min \{ T_0, T_1, T_2 \} \;.
  \]
Note that by choice of $T_2$ condition \eqref{e:step4-condition1} is automatically satisfied for $I = [0, T^\star]$  and \eqref{e:step4-1} turns into  
\begin{align*}
(T^{\star} )^\alpha [Z]_{\alpha, [0, T^\star] }  \lesssim \epsilon_2 +  \epsilon_2 \epsilon_1  3^m \jbrac{y_{0}} \;, 
\end{align*}
where we have used the fact that by definition 
\[
T^\star \jbrac{ Y}_{[0, T^\star ]}^m  \leq  \epsilon_1  \frac{1}{\jbrac{y_0}^{m-1}}   3^m  \jbrac{y_0}^m  \leq   \epsilon_1   3^m  \jbrac{y_0}    \;.
\] 
Hence we get from \eqref{RDE3}, choosing $\epsilon_1 = \frac{1}{2} 3^{-m}$ and possibly making $\epsilon_2$ smaller
\begin{align}
\notag
\jbrac{Y}_{[0, T^{\star} ]}
\le \| Y \|_{[0, T^{\star} ]} + 1 & \leq |y_0 | +  T^{\star}  \| Y \|_{[0, T^\star ]}^m  +  (T^{\star} )^\alpha [Z]_\alpha \\
\notag
&  \leq |y_0 |    + \epsilon_1  3^m  \jbrac{y_0} + C\Big(   \epsilon_2 +  \epsilon_2 \epsilon_1  3^m  \jbrac{y_0} \Big)   \\
\label{mal-wieder-conclusion}
&\leq  \frac{3}{2}  \jbrac{y_0}  + 1 \leq \frac{5}{2}   \jbrac{y_0}  \;. 
\end{align}
To conclude that \eqref{e:small-time-bound1} holds indeed, it remains to see that $  T^{\star} = \min \{ T_0, T_1, T_2 \}  = \min \{  T_1, T_2 \} $. This is indeed the case, because otherwise we would 
have $\| Y \|_{[0, T^\star ]} = \| Y \|_{[0, T_0 ]}$ and $T_0 < 1$, but then \eqref{mal-wieder-conclusion} would  contradict \eqref{observationTbar}.

We next pass to the case of polynomially bounded $\sigma$, i.e. to establishing \eqref{e:small-time-bound3} under Assumption~\ref{Ass-Sigma-Polynomial}. Let $T_0$ be as in \eqref{def-T0} and 
set 
  \[
  T^{\star} = \min \{ T_0, T_1, T_2 \} \;,
  \]
where this time $T_2$ is defined in \eqref{e:def-T1T2-bis}. We claim that on $I = [0,T^\star]$ satisfies the Assumption~\eqref{e:step2-condition}. Indeed, just as in \eqref{e:error-in-evaluation-bound} above one can see that for $s,t$ we have
\begin{align*}
 E_{s,t}^{\Y}  = 
 &\sum_{h \in \rtree{N-1}} |\scal{\X_{s,t}}{h} \Upsilon[h](Y_{s})|\\
  & \lesssim  \sum_{h \in \rtree{N-1}} (T^{\star})^{|h| \alpha} [\X : h ] \| \Upsilon[h](Y_{\bullet}) \|_{[0, T^\star]}   \\
&  \lesssim  \sum_{h \in \rtree{N-1}} \epsilon_2^{|h| \alpha}   \frac{1}{[\X \colon h  ]}   \Big( \frac{1}{\jbrac{y_0}}  \Big)^{(\gamma-1)|h|}       [\X : h ]  \jbrac{Y}_{[0, T^\star]}^{(\gamma -1) |h| +1}   \\
& \lesssim  \sum_{h \in \rtree{N-1}} \epsilon_2^{|h| \alpha}   \frac{1}{[\X \colon h  ]}   \Big( \frac{1}{\jbrac{y_0}}  \Big)^{(\gamma-1)|h|}       [\X : h ] \jbrac{Y}_{[0, T^\star]}^{(\gamma -1) |h| +1} \\
&\lesssim  \epsilon_2^{\alpha}  \jbrac{Y}_{[0, T^\star]} \;,
\end{align*}
where in the first inequality the fact $T^\star \leq T_2$ and in the last inequality $T^\star \leq T_0$ was used. From this one can conclude just as in \eqref{e-Uestimate-simplified} that 
\begin{align}
\notag
U_{\mu}(f, \Y,[0, T^\star] )& \lesssim 
 \jbrac{ Y}_{[0, T^\star]}^{  (\gamma -1)  (|f|+1) }  ,\\
U_{\mu}(f,\bar{f}, \Y,[0, T^\star]) &\lesssim   
\jbrac{Y}_{[0, T^\star]}^
 { (\gamma -1) (|f|+\bar{f} +1)  +1 }\;,
\notag
%\label{e-Uestimate-simplified-bis}   \;,
\end{align}
which in turn leads to 
\begin{align*}
L^\alpha[Z]_{\alpha,[0, T^\star]}  \lesssim \epsilon \jbrac{Y}_{[0, T^\star]} \lesssim  \epsilon \jbrac{y_0}  \,
\end{align*}
where the first inequality follows like \eqref{e:step7-Zbound} and the second follows from $T^\star \leq T_0$. The rest of the argument follows as in the case of bounded coefficients.
\end{proof}

\begin{proof}[Proof of Corollary~\ref{coro} ]
We treat the case of bounded coefficients first. 
If we have $t \geq T_1 =  \epsilon_1 \frac{1}{\jbrac{y_0}^{m-1}}$ or if $t \geq T_2 = \epsilon_2 \min   \bigg\{  \frac{1}{ [\X \colon h  ]^{\frac{1}{|h| \alpha }}: h \in \rtree{N}  }  \bigg\}$  defined in \eqref{e:def-T1T2-1}
 we use the estimate \eqref{e:thm-bound}  
\[
|Y(t) |  \lesssim  \max \Big\{   t^{-\frac{1}{m-1}}   \; ,\max_{h \in \rtree{N}} [\X:h]^{\frac{1}{ m\alpha|h|}}  \Big\}  \;. 
\]
In the second term on the right-hand-side, we can replace the exponent $\frac{1}{ m\alpha|h|}$ by the slightly larger $\frac{1}{ (m-1)\alpha|h|}$ at the expense of adding the additional $1$ into the maximum in \eqref{e:cor-bound}. 
For the first term, we use the lower bound on $t$ provided by $T_1$ and $T_2$. Indeed, for $t \geq T_1$ we get 
\begin{align}\label{larget1}
t^{-\frac{1}{m-1}}  \leq \epsilon_1^{-\frac{1}{m-1}} \jbrac{ y_0} \;
\end{align}
and for $t \geq T_2$ we have 
\[
t^{-\frac{1}{m-1}}  \leq \epsilon_2^{-\frac{1}{m-1}}  \max_{h \in \rtree{N}}   [\X \colon h  ]^{\frac{1}{(m-1) |h| \alpha }}  \;
\]
in each case establishing \eqref{e:cor-bound}. Finally, if $t \leq \min\{ T_1, T_2 \} $  \eqref{e:cor-bound} follows from \eqref{e:small-time-bound1}. 

The argument for polynomial coefficients is very similar. If  $t \geq T_1$ or if $t \geq T_2$, where this time $T_2$ is defined by \eqref{e:def-T1T2-bis}, we use 
the estimate  \eqref{e:thm-bound-bis}
\begin{align*}%\label{e:thm-bound-bisbis}
|Y(t) | \lesssim  \max \Big\{  t^{-\frac{1}{m-1}}  \;, \max_{h \in \rtree{N}} [\X:h]^{\frac{1}{ ((m-1) \alpha- \gamma+1)|h|}} \Big\} \;.
\end{align*}
For $t \geq T_1$ \eqref{larget1} gives the desired \eqref{e:cor-bound-bis}, while for $t \geq T_2$ we write
\begin{align*}
t^{-\frac{1}{m-1}}  &\leq \epsilon_2^{-\frac{1}{m-1}}   \jbrac{y_0}^{\frac{\gamma-1}{(m-1) \alpha}}   \max_{h \in \rtree{N}}   [\X \colon h  ]^{\frac{1}{(m-1) |h| \alpha }}  \\
&\lesssim \jbrac{y_0} +  \max_{h \in \rtree{N}} [\X:h]^{\frac{1}{ ((m-1) \alpha- \gamma+1)|h|}}  \;,
\end{align*}
where in the second inequality we have used the estimate $ab \leq a^p + b^q$ for $p = \frac{(m-1)\alpha}{ \gamma -1}$ and $q = \frac{(m-1)\alpha}{ (m-1)\alpha  - \gamma +1 }$.
 As above, if $t \leq \min\{ T_1, T_2 \} $  \eqref{e:cor-bound-bis} follows from \eqref{e:small-time-bound3}. 
\end{proof}

\bibliographystyle{alpha}
\bibliography{biblio_stage_M1}
\end{document}